\documentclass[11pt]{article}

\usepackage{amsmath,amssymb,amsthm,mathtools}
\usepackage{bm}
\usepackage{geometry}
\usepackage{enumitem}
\usepackage{hyperref}
\usepackage{booktabs}
\usepackage{tabularx}
\usepackage{placeins}
\usepackage{graphicx}
\usepackage{xcolor}
\usepackage{tikz}
\usetikzlibrary{arrows.meta,decorations.markings,decorations.pathreplacing,calc}
\newtheorem{theorem}{Theorem}[section]
\newtheorem{proposition}[theorem]{Proposition}
\newtheorem{lemma}[theorem]{Lemma}
\newtheorem{corollary}[theorem]{Corollary}
\newtheorem{definition}[theorem]{Definition}
\newtheorem{assumption}[theorem]{Assumption}
\newtheoremstyle{bodyupright}
  {}{}
  {\normalfont}
  {}
  {\bfseries}
  {.}
  {5pt plus 1pt minus 1pt}
  {}
\theoremstyle{bodyupright}
\newtheorem{example}[theorem]{Example}
\newtheorem{remark}[theorem]{Remark}
\theoremstyle{plain}

\newcommand{\RR}{\mathbb{R}}
\newcommand{\CC}{\mathbb{C}}

\newcommand{\HK}{\mathcal{H}_K}

\newcommand{\Int}{\mathcal I}
\newcommand{\MP}{\mathrm{MP}}
\newcommand{\SE}{\mathrm{SE}}
\newcommand{\rhoInt}{\rho^{\Int}}

\newcommand{\HInt}{\check H^{\Int}}

\DeclareMathOperator{\Hess}{Hess}

\newcommand{\bfi}{\bfseries\itshape}

\newif\ifshownotes\shownotestrue   

\title{Learning Hamiltonians from Trajectory Data with Kernel Methods}
\author{%
Jianyu Hu\textsuperscript{1},
David Mart\'in de Diego\textsuperscript{2},
Juan-Pablo Ortega\textsuperscript{1}, and
Miguel Vaquero\textsuperscript{3}\\[0.6em]
\small \textsuperscript{1}Division of Mathematical Sciences, School of Physical and Mathematical Sciences,\\[-0.1em]
\small Nanyang Technological University, Singapore\\
\small \textsuperscript{2}Instituto de Ciencias Matem\'aticas (ICMAT), CSIC--UAM--UC3M--UCM, Madrid, Spain\\
\small \textsuperscript{3}School of Science and Technology, IE University, Madrid, Spain\\[0.4em]
\small \texttt{jianyu.hu@ntu.edu.sg}; \texttt{juan-pablo.ortega@ntu.edu.sg};\\[-0.1em]
\small \texttt{david.martin@icmat.es}; \texttt{miguel.vaquero@ie.edu}
}
\date{\today}

\begin{document}
\maketitle

\begin{abstract}
We study the recovery of a Hamiltonian from discrete trajectory observations using structure-preserving kernel methods. When exact-flow data are fitted through a symplectic one-step scheme, the fixed-step learning target is generally not the physical Hamiltonian itself, but a scheme-dependent inverse modified Hamiltonian. For the implicit midpoint and $p$-implicit symplectic Euler methods, we establish the existence of analytic inverse targets and show that their discrepancies from the physical Hamiltonian are second and first order in the observation step, respectively.
Within an RKHS framework, we derive finite-sample reconstruction bounds for random trajectories, allowing dependence among observations from the same trajectory, and separate the error into estimation, approximation, and integrator-induced components. We further establish stability under noisy trajectory observations and develop a multistride Richardson correction that systematically raises the order of the integrator-induced error without requiring additional trajectories.
Numerical experiments confirm the predicted scheme-dependent orders, the effect of Richardson correction, and the applicability of the method across several Hamiltonian systems.
\end{abstract}

\tableofcontents

\section{Introduction}

Hamiltonian systems provide a fundamental mathematical description of conservative dynamics in mechanics and many other areas of science \cite{Abraham1978,Marsden1994,arnol2013mathematical}.  
In canonical coordinates $z=(q,p)\in\mathbb{R}^{2d}$, the dynamics generated by a Hamiltonian $H$ is governed by
the equation \begin{equation}\label{eq:intro-hamiltonian}
\dot z = J\nabla H(z),
\end{equation}
where $J$ is the canonical symplectic matrix.  The increasing availability of trajectory data has motivated proposals to solve the inverse problem of recovering $H$, or the dynamics it generates, from observations of the system.  
Structure preservation is an important consideration in this context: the learned dynamics should retain its Hamiltonian structure and the treatment of discrete trajectory observations should respect the underlying symplectic geometry.
In this work, {\it we study the recovery of Hamiltonians from discrete trajectory observations using structure-preserving kernel methods}. Our main goal is to {\it understand how the symplectic discretization underlying the learning procedure affects the Hamiltonian identified from the data and to quantify the quality of its recovery from finitely many discrete-time trajectories using a reproducing kernel Hilbert space (RKHS) as the concept class}.

The need to preserve structure is clearest in the classical field of {\bfi geometric numerical integration}, where the analogous question was settled long ago. A symplectic method of order $r$ applied to \eqref{eq:intro-hamiltonian} with step $\delta$ is not the exact flow of $H$, but, for analytic $H$ and after optimal truncation of a generally divergent series, it agrees up to an exponentially small remainder with the time-$\delta$ flow of a {\bfi modified Hamiltonian} $\widetilde H_\delta=H+\mathcal O(\delta^r)$. Consequently, as long as the numerical trajectory remains in a fixed compact set, the energy error stays of size $\mathcal O(\delta^r)$ over times of order $e^{c/\delta}$, and, for variational discretizations, the momentum maps associated with symmetries are conserved exactly through the discrete Noether theorem \cite{benettin1994hamiltonian,HLW06,sanzserna1994numerical,leimkuhler2004simulating}. A non-symplectic method of equal or higher order enjoys no such protection: its energy error grows secularly, typically linearly in $t$, so that a locally more accurate scheme is eventually overtaken by a less accurate structure-preserving one. Over long horizons the decisive quantity is not the local truncation error but the presence or absence of the structure.

Learning a vector field from data is a discretization of a different kind, and the same dichotomy applies. For an unconstrained estimate $\widehat X$ of $J\nabla H$, differentiating $H$ along the induced flow $\widehat z_t$ gives only
\begin{equation*}
\bigl|H(\widehat z_t)-H(\widehat z_0)\bigr|\ \le\ t\,\sup\|\nabla H\|\,\bigl\|\widehat X-J\nabla H\bigr\|_\infty ,
\end{equation*}
and this linear growth is generically attained, exactly as for a non-geometric integrator: a field error of size $\varepsilon$ becomes an energy error of size $\varepsilon t$. If instead the hypothesis space is $\{J\nabla h\mid h\in\mathcal H_K\}$, so that the learned field is Hamiltonian by construction with learned Hamiltonian $\widehat h$, then $\widehat h(\widehat z_t)=\widehat h(\widehat z_0)$ for every $t$ in the maximal interval of existence, and therefore, for every $c\in\mathbb R$,
\begin{equation*}
\bigl|H(\widehat z_t)-H(\widehat z_0)\bigr|\ \le\ 2\sup_{z \in \mathcal L}\bigl|H(z)-\widehat h(z)-c\bigr| ,
\end{equation*}
where $\mathcal L$ is the connected component of $\{\widehat h=\widehat h(\widehat z_0)\}$ containing $\widehat z_0$. The true energy is not conserved, but its deviation is bounded uniformly in $t$ by the approximation error of $\widehat h$ on a single level set, with no secular growth, and symplecticity, Liouville volume and, for invariant $\widehat h$, the corresponding conservation laws hold exactly for the learned dynamics. The guarantee comes from constraining the object that is computed rather than the loss used to compute it, and two estimators with identical training residuals may differ completely in their long-time behavior. The kernel formulation adopted here imposes the constraint at that level; the question addressed in this work is which $\widehat h$ the data then select, once the observations are discrete in time.

A variety of structure-preserving learning methods have been developed for Hamiltonian systems; see \cite{celledoni2021structure} for a survey of the broader program of encoding geometric structure into learning architectures, and \cite{celledoni2023dynamical} for architectures obtained by applying a structure-preserving discretization to a parametrized vector field, so that the property imposed on the field is inherited by the network. A prominent example is {\bfi Hamiltonian neural networks (HNNs)}, whose central idea is to model the Hamiltonian function itself and generate the dynamics through \eqref{eq:intro-hamiltonian}, so that the resulting vector field is Hamiltonian by construction \cite{bertalan2019learning,greydanus2019hamiltonian,toth2019hamiltonian}; Dierkes et al. \cite{dierkes2023symmetry} augmented this construction with a Lie algebra framework that learns symmetries of the system jointly with the energy.
Analogous approaches based on the Lagrangian formulation have also been developed \cite{cranmer2020lagrangian,chen2023data}.
Celledoni et al. \cite{celledoni2023constrained} studied approximations of Hamiltonians of holonomically constrained mechanical systems and emphasized the role of the integrator used in the training loop. In the kernel setting, Hu et al. \cite{hu2024hamiltonian} developed a reproducing kernel Hilbert space (RKHS) formulation based on differential reproducing properties, combining Hamiltonian structure preservation with quantitative finite-sample guarantees. This framework was further extended to Poisson systems and infinite-dimensional geometric settings in \cite{hu2024poisson,hu2025kernel}.
These developments primarily concern how the learned model encodes geometric structures.

For discrete trajectory data, the available information consists of system states separated in time and functionally related by the flow of the Hamiltonian data-generating system. In that case, structure preservation therefore concerns not only the representation of the Hamiltonian but also the relation between consecutive flow observations. One approach is to learn this discrete relation directly.
For example, Jin et al. \cite{jin2020sympnets} introduced SympNets whose architecture produces symplectic maps by construction, while Chen and Tao \cite{chen2021data} used generating functions to represent and learn symplectic maps from trajectory data; related Gaussian-process approaches to symplectic map learning were developed in \cite{rath2020symplectic}.
In the variational formulation, the corresponding discrete object is a {\bfi discrete Lagrangian}. Ober-Bl\"obaum and Offen \cite{oberblobaum2023variational} learned discrete Lagrangians from position observations alone, without observing or approximating velocities or momenta; \cite{lishkova2023discrete} combined this with the identification of symmetry groups and the associated conserved quantities, and \cite{hansen2025forced} extended the discrete variational formulation to forced systems through the Lagrange--d'Alembert principle.
An alternative is to retain the Hamiltonian as the object to be learned and incorporate a symplectic numerical integrator directly into the learning relation between consecutive observations \cite{chen2020symplectic,xiong2021nonseparable,dipietro2020sparse}.
This leads to a fundamental distinction between {\bfi structure preservation} and {\bfi target preservation}. Although a symplectic discretization preserves the symplectic structure of the dynamics, the Hamiltonian identified at a finite time step need not coincide with the physical Hamiltonian generating the observations.

Indeed, the observed trajectory pairs are generated by the exact Hamiltonian flow, whereas the learning relation is imposed through a symplectic numerical integrator. This mismatch gives rise to a scheme-dependent learning target.
The framework of {\bfi backward error analysis} explains this phenomenon. Classical backward error analysis associates a numerical integrator with a nearby modified differential equation; for symplectic integrators, the modified dynamics is again Hamiltonian to arbitrary formal order \cite{benettin1994hamiltonian,HLW06}. 
The learning problem reverses this viewpoint: given observations generated by the exact flow, one seeks a Hamiltonian whose numerical one-step map reproduces the observed evolution.

This inverse viewpoint has recently emerged in data-driven dynamical systems. Zhu et al. \cite{zhu2021imde,zhu2020deep,zhu2022lmm} introduced and further developed {\bfi inverse modified differential equations} to characterize the dynamics implicitly identified when trajectory data are fitted through a prescribed numerical integrator. 
For symplectic discretizations, Offen and Ober-Bl\"obaum \cite{offen2021symplectic} introduced the corresponding notion of an inverse modified Hamiltonian adapted to the geometric integrator, learned it directly from trajectory observations in a Gaussian-process framework, and showed that predictions obtained by applying the same integrator to the learned target compensate the discretization error.
The variational counterpart was developed in \cite{oberblobaum2023variational}, where an inverse version of variational backward error analysis is used to relate the learned discrete Lagrangian to its continuous analogue; the corresponding analysis for variational discretizations of PDEs is carried out in \cite{mclachlan2022backward}, and the field-theoretic learning problem in \cite{offen2024pde}.
David and M\'ehats \cite{david2023symplectic} analyzed the scheme-dependent Hamiltonians associated with symplectic Euler and the midpoint rule and derived higher-order corrections. These works show that the object identified from discrete trajectory data is naturally tied to the numerical scheme used in the learning relation; see also \cite{lu2025numerical} for related observations on numerical integration artifacts in data-driven dynamical-system learning. 
A closely related line of work focuses on the direct approximation of discrete symplectic maps. Herkert et al. \cite{herkert2026symplectic,herkert2026parameter} used generalized kernel interpolation to learn generating functions underlying symplectic-Euler-type discrete updates, with the goal of constructing accurate structure-preserving approximations of the discrete flow.
Related convergence results have also been obtained in the variational setting. Offen \cite{offen2026variational} developed a Gaussian-process framework for identifying continuous and discrete Lagrangians and established convergence as the observation points become dense, together with uncertainty quantification for the learned variational model. Taken together, these works clarify how numerical discretization enters the identification of structure-preserving dynamical models and provide complementary analyses of the resulting approximation and discretization effects.

\paragraph{Summary of the main results.}
The object we wish to recover is the physical Hamiltonian $H$ generating \eqref{eq:intro-hamiltonian}, and the information available about it consists of finitely many states sampled along exact trajectories at a fixed time step. Accordingly, we consider $N$ independent trajectories of the Hamiltonian system \eqref{eq:intro-hamiltonian}, each observed at $L+1$ consecutive time instants with a common sampling interval $\delta>0$. More precisely, the initial states $z_0^{(1)},\ldots,z_0^{(N)}$ are i.i.d. sampled from a probability measure $\mu_0$, and the observed trajectory states are
$$
z_\ell^{(n)}
=
\varphi_{\ell\delta}^{H}(z_0^{(n)}),
\qquad
n=1,\ldots,N,\quad
\ell=0,\ldots,L,
$$
where $\varphi_t^H$ denotes the exact Hamiltonian flow generated by $H$. No derivative of the flow is observed, and $\delta$ is not assumed small relative to the sample size.

The data are related to $H$ through a symplectic one-step integration method. Throughout the paper, $\Int\in\{\mathrm{MP},\mathrm{SE}\}$ denotes the selected symplectic integrator, with $\mathrm{MP}$ and $\mathrm{SE}$ standing for the {\bfi implicit midpoint rule} and the {\bfi $p$-implicit symplectic Euler method}, respectively \cite{HLW06,sanzserna1994numerical,leimkuhler2004simulating}. A superscript $\Int$ indicates dependence on the selected integrator. For the selected integrator $\Int$, each consecutive pair $(z_{\ell-1}^{(n)},z_\ell^{(n)})$ determines a {\bfi scheme-adapted anchor}
$$
x_\ell^{\Int,(n)}:=
\begin{cases}
\dfrac{1}{2}\bigl(z_{\ell-1}^{(n)}+z_\ell^{(n)}\bigr),
& \Int=\mathrm{MP},\\[2mm]
\bigl(q_{\ell-1}^{(n)},p_\ell^{(n)}\bigr),
& \Int=\mathrm{SE},
\end{cases}
$$
and a {\bfi discrete velocity}
$$
v_{\delta,\ell}^{(n)}:=\frac{z_\ell^{(n)}-z_{\ell-1}^{(n)}}{\delta}.
$$
The inverse modified Hamiltonian analysis of Section~\ref{sec:hamiltonian} shows that, for sufficiently small $\delta$, these quantities satisfy the exact identity
\begin{align}\label{eq:scheme-adapted anchor}
v_{\delta,\ell}^{(n)}=J\nabla\check H_\delta^{\Int} \bigl(x_\ell^{\Int,(n)}\bigr),
\qquad n=1,\ldots,N,\quad \ell=1,\ldots,L,
\end{align}
where $\check H_\delta^{\Int}$ is the exact {\bfi inverse modified Hamiltonian} associated with the physical Hamiltonian $H$.
Hence, at a fixed observation step $\delta$, the trajectory data provide exact derivative observations of the scheme-dependent target $\check H_\delta^{\Int}$ rather than of the physical Hamiltonian $H$.

This identity turns the recovery of $H$ into a regression problem for a function whose gradient (or a linear function thereof) is observed. Let $\mathcal H_K$ be the {\bfi reproducing kernel Hilbert space (RKHS)} associated to a fixed chosen kernel map $K: \mathbb{R}^{2d} \times \mathbb{R}^{2d} \rightarrow \mathbb{R}$. Using the scheme-adapted observations \eqref{eq:scheme-adapted anchor}, we reconstruct the Hamiltonian using the {\bfi regularized least-squares estimator}
$$
\widehat h_{\lambda,N,L}^{\Int}:= \mathop{\arg\min}_{h\in\mathcal H_K}\left\{\frac{\delta^2}{NL}\sum_{n=1}^{N}\sum_{\ell=1}^{L}\left\|J\nabla h(x_\ell^{\Int,(n)})-v_{\delta,\ell}^{(n)}\right\|^2+\lambda\|h\|_{\mathcal H_K}^2\right\},
$$
where $\lambda>0$ is a {\bfi Tikhonov regularization parameter}. The recovery of the physical Hamiltonian therefore involves the two levels
$$
\widehat h_{\lambda,N,L}^{\Int} \longrightarrow \check H_\delta^{\Int} \longrightarrow H.
$$
The first involves finite-sample estimation and regularization in the RKHS, whereas the second is governed by the integrator-induced discrepancy between the scheme-dependent target $\check H_\delta^{\Int}$ and the physical Hamiltonian $H$. We quantify these contributions in a common RKHS norm, establish deterministic stability under perturbed trajectory observations, and develop a {\bfi multistride extrapolation} procedure that systematically reduces the integrator-induced target bias.
The main results are summarized as follows.

\begin{enumerate}[leftmargin=*]

\item
\textbf{Quantitative RKHS control of scheme-dependent targets.}
Exact local inverse modified Hamiltonians for the symplectic Euler and implicit midpoint methods have been established in earlier works \cite{david2023symplectic,offen2021symplectic}. Building on this viewpoint, in Section~\ref{sec:hamiltonian} we directly invert the exact-flow midpoint and mixed-endpoint maps to obtain jointly analytic inverse Hamiltonians $\check H_\delta^{\Int}$ on fixed neighborhoods. 
Under certain assumptions (see Assumption~\ref{ass:domain} later on), for sufficiently small $\delta$, this analytic construction yields quantitative control in the RKHS norm
$$
\|\check H_\delta^{\Int}-H\|_{\mathcal H_K} \leq C_{\Int}\delta^{r_{\Int}},
$$
where $r_{\mathrm{MP}}=2$ and $r_{\mathrm{SE}}=1$.
The scheme dependence is reflected in the small-step expansions: midpoint symmetry yields an even analytic family in $\delta$, whereas symplectic Euler generally has a nonvanishing first-order correction. These estimates provide the deterministic target-bias estimate needed for the finite-sample analysis below.

\item
\textbf{Finite-sample recovery and physical consistency.}
In Section~\ref{sec:learning}, we show that the regression problem introduced
above arises naturally from matching the observed time-$\delta$ flow with
the one-step map of the selected symplectic integrator. For the midpoint and
symplectic Euler methods, the resulting one-step discrepancy reduces to the
scheme-adapted regression loss above. Building on the differential RKHS framework of \cite{hu2024hamiltonian,hu2024poisson}, we characterize the corresponding population and empirical problems through the {\bfi input distribution} $\rho^\Int$ and {\bfi covariance operator} $B^\Int$ defined in \eqref{eq:rho-generic} and \eqref{population covariance operator}.

In Section~\ref{sec:main-results}, we establish finite-sample recovery bounds
from $N$ independent trajectories, allowing dependence among observations
within each trajectory; related random-trajectory kernel analyses appear in
\cite{lu2019nonparametric,feng2024learning}. Under the {\bfi source condition}
\eqref{eq:source-condition}, which requires $\check H_\delta^\Int=(B^\Int)^\gamma\psi_\delta^\Int$
for some $\gamma\in(0,1]$ and $\psi_\delta^\Int\in\mathcal H_K$, and with
$\eta=\lambda/\delta^2$, we obtain, on the {\bfi identifiable subspace} and up to
source norms and logarithmic factors,
$$
\|
\widehat h_{\lambda,N,L}^{\Int}-H
\|_{\mathcal H_K}
\lesssim
\eta^{-1}N^{-1/2}
+
\eta^\gamma
+
\delta^{r_{\Int}}.
$$
The three terms quantify, respectively, estimation error, approximation
error, and the integrator-induced discrepancy between $\check H_\delta^\Int$
and $H$. The two exponents have distinct origins and are not comparable: $\gamma$
measures the regularity of the fixed-step target relative to the spectrum of
$B^\Int$, and so governs how fast the approximation error vanishes, whereas
$r_\Int\in\{1,2\}$ is the order of the integrator-induced error fixed by the
scheme, independently of the data. In particular, at a fixed $\delta$,
increasing the number of trajectories can recover the scheme-dependent
target but cannot eliminate its discrepancy from the physical Hamiltonian.

Under uniform control of the source norms, the choices
$$
\eta_N\asymp N^{-1/(2\gamma+2)},
\qquad
\delta_N\asymp
N^{-\gamma/[r_\Int(2\gamma+2)]},
\qquad
\lambda_N
\asymp
N^{-\frac{r_\Int+2\gamma}{r_\Int(2\gamma+2)}}
$$
balance these contributions and yield
$$
\|
\widehat h_{\lambda_N,N,L}^{\Int}-H
\|_{\mathcal H_K}
=
O_{\mathbb P}
\left(
N^{-\gamma/(2\gamma+2)}
\right).
$$

\item
\textbf{Pathwise stability under additive observation noise.}
In Section~\ref{sec:deterministic-perturbations}, we study the stability of the regularized estimator under additive observation noise. The observation noise affects the regression problem through perturbations of both the scheme-adapted anchors and the discrete velocities. We prove
\[
\|
\widehat h_{\lambda,NL}^{U,W}
-
\widehat h_{\lambda,N,L}^{\Int}
\|_{\HK}
\leq
\frac{r_W}{2\sqrt{\eta}}
+
\frac{3\kappa_1\kappa_2}{\eta}
\|\check H_\delta^{\Int}\|_{\HK}\,r_U,
\]
where $\widehat h_{\lambda,NL}^{U,W}$ is the estimator constructed from the
noisy trajectory observations, and $r_U$ and $r_W$ are the realized anchor-
and velocity-noise levels defined in \eqref{eq:pathwise-noise-radii}.
This bound shows that velocity noise is amplified at the scale
$\eta^{-1/2}$, whereas anchor noise enters at the scale $\eta^{-1}$.
Combined with the clean-data estimate above, this yields a reconstruction
bound for the physical Hamiltonian separating estimation error,
approximation error, integrator-induced error, and observation noise.

\item
\textbf{Multistride extrapolation and higher-order physical recovery.}
In Section~\ref{sec:richardson}, we exploit the step-size expansion of the
inverse modified Hamiltonians to reduce the integrator-induced target bias
at the estimator level. From the same uniformly sampled trajectories, we
construct estimators $\widehat h_{s,\delta}^{\Int}$ using observation pairs
separated by $s\delta$, $s=1,\ldots,Q$, and form
$$
\widehat h_\delta^{\Int,(Q)}
=
\sum_{s=1}^Q
\alpha_s^\Int\widehat h_{s,\delta}^{\Int},
$$
where the coefficients $\alpha_s^\Int$ cancel the first $Q-1$ terms in the
step-size expansion. The corresponding extrapolated target satisfies
$$
\|
\check H_\delta^{\Int,(Q)}-H
\|_{\mathcal H_K}
\leq
C_{Q,\Int}\delta^{r_\Int Q},
$$
raising the deterministic target accuracy from
$O(\delta^{r_\Int})$ to $O(\delta^{r_\Int Q})$.

We further quantify the statistical amplification introduced by the
extrapolation. Although the stride estimators are constructed from the same
trajectories and are therefore dependent, no independence across strides is
required. Under uniform source conditions, the extrapolated estimator
retains the statistical recovery rate above, while the step-size requirement
is relaxed from
$$
\delta_N
\asymp
N^{-\gamma/[r_\Int(2\gamma+2)]}
\qquad\text{to}\qquad
\delta_N
\asymp
N^{-\gamma/[r_\Int Q(2\gamma+2)]}.
$$
Thus the same physical recovery accuracy can be achieved with a larger
observation step.

\item
\textbf{Numerical validation and trajectory effects.}
In Section~\ref{sec:experiments}, we use the H\'enon--Heiles system to
separate and validate the three components of the reconstruction error.
The midpoint and symplectic Euler estimators exhibit the predicted
second- and first-order integrator-induced errors, respectively, and the
learned discrepancies reproduce the leading scheme-dependent
inverse-target corrections. We further show that multistride Richardson
extrapolation reduces the integrator-induced error at the predicted
orders until the estimation and approximation errors become dominant.

We also examine robustness under noisy trajectory-state observations and
the allocation of a fixed observation budget between independent
trajectories and repeated observations along each trajectory. The latter
experiments illustrate the distinct roles of $N$ and $L$: independent
trajectories provide statistical averaging and broader sampling of the
initial distribution, whereas observations within a trajectory remain
dynamically dependent. Finally, reconstructions of a double pendulum,
a Frenkel--Kontorova system, and a strongly non-convex Hamiltonian
demonstrate the applicability of the same trajectory-based midpoint
estimator beyond the benchmark problem.
\end{enumerate}

The remainder of the paper is organized as follows.
Section~\ref{sec:hamiltonian} develops the inverse modified Hamiltonian
analysis for the implicit midpoint and $p$-implicit symplectic Euler
methods and establishes the corresponding small-step expansions and RKHS
bias estimates.
Section~\ref{sec:learning} formulates the trajectory-based kernel
regression problem, introduces the associated population and empirical
operators, and derives the representer formulation.
Section~\ref{sec:main-results} establishes the finite-sample recovery and
physical-consistency bounds and also studies deterministic stability under
perturbations of the observed trajectories.
Section~\ref{sec:richardson} develops the multistride extrapolation
procedure and the resulting higher-order recovery estimates.
The numerical experiments in Section~\ref{sec:experiments} validate the
scheme-dependent integrator-induced errors, the finite-sample behavior,
the multistride correction and its noise sensitivity, and the distinct
roles of independent trajectories and within-trajectory observations.
They also demonstrate the reconstruction on several additional
Hamiltonian systems. The code and numerical experiments accompanying this paper are
publicly available at
\url{https://github.com/jianyuhu/Learning-Hamiltonians-from-Trajectory-Data-with-Kernel-Methods}.

\section{Hamiltonian systems, symplectic integrators, and inverse modified Hamiltonians}
\label{sec:hamiltonian}

The main goal of this section is to introduce the two symplectic one-step methods used throughout the paper and the inverse modified Hamiltonians associated with them. The key fact established in this section is that, for sufficiently small step size, the exact time-$\delta$ flow can be represented exactly as one step of the selected integrator applied to an integrator-dependent Hamiltonian. 

We begin by fixing the Hamiltonian setup; see \cite{Abraham1978,Marsden1994} for the general theory. We work on the canonical phase space $\RR^{2d}$ with coordinates $z=(q,p)$, $q,p\in\RR^d$, and canonical symplectic matrix
\[
J=\begin{pmatrix}0&I_d\\-I_d&0\end{pmatrix},
\qquad J^\top=-J,\qquad J^2=-I_{2d}.
\]
Tangent vectors are identified with elements of $\RR^{2d}$ through the canonical isomorphism $T_z\RR^{2d}\cong\RR^{2d}$, and are split as $u=(u_q,u_p)$ conformally with $z=(q,p)$, so that $u_q,u_p\in\RR^d$ are the position and momentum components of $u$ and $u_q^i,u_p^i$ denote their $i$-th entries, $i=1,\ldots,d$; the same convention applies to $v \in \mathbb{R} ^{2d}$. The associated {\bfi symplectic form} is the constant differential two-form
\[
\omega(u,v):=u^\top Jv=u_q^\top v_p-u_p^\top v_q=\sum_{i=1}^{d}\bigl(u_{q}^{i}v_{p}^{i}-u_{p}^{i}v_{q}^{i}\bigr),
\qquad u,v\in\RR^{2d},
\]
that is, $\omega=\sum_{i=1}^{d}\mathrm dq^i\wedge\mathrm dp^i$.
It is skew-symmetric, closed, and nondegenerate, so $(\RR^{2d},\omega)$ is a symplectic vector space and $(q,p)$ are {\bfi Darboux coordinates} for it. By the Darboux theorem, every symplectic manifold of dimension $2d$ admits such coordinates in a neighborhood of each point, in which the symplectic form takes exactly the expression above \cite[Thm.~3.2.2]{Abraham1978}, \cite[Thm.~5.1.1]{Marsden1994}. The analysis below is therefore carried out in the canonical model without loss of generality at the local level at which we work.

Nondegeneracy makes $v\mapsto\omega(v,\cdot)$ an isomorphism of $\RR^{2d}$ onto its dual, so every sufficiently smooth {\bfi Hamiltonian function} $H:\RR^{2d}\to\RR$ determines a unique {\bfi Hamiltonian vector field} $X_H$ by
\begin{equation}\label{eq:hamiltonian-vector-field}
\omega\bigl(X_H(z),v\bigr)=\mathrm dH(z)v
\quad\text{for all }v\in\RR^{2d},
\qquad\text{equivalently}\qquad
X_H(z)=J\nabla H(z),
\end{equation}
the two forms being equivalent because $J^\top J=I_{2d}$. The dynamics are governed by
\begin{equation*}
\dot z(t)=J\nabla H(z(t)), \qquad z(0)=z_0,
\end{equation*}
and we denote the corresponding local flow by $\varphi_t^H$. Writing $\{F,G\}:=\nabla F^\top J\nabla G=\omega(X_F,X_G)$ for the canonical Poisson bracket, any observable evolves along the flow according to $\tfrac{\mathrm d}{\mathrm dt}\bigl(F\circ\varphi_t^H\bigr)=\{F,H\}\circ\varphi_t^H$. Taking $F=H$ and using the skew-symmetry of $J$ gives
\begin{equation}\label{eq:energy-conservation}
\frac{\mathrm d}{\mathrm dt}H\bigl(\varphi_t^H(z_0)\bigr)=\nabla H^\top J\nabla H=0 ,
\end{equation}
so the Hamiltonian is a first integral: energy is conserved exactly along the exact flow. Note that $H$ enters \eqref{eq:hamiltonian-vector-field} only through its differential, so $H$ and $H+c$ generate the same dynamics for every $c\in\RR$; a Hamiltonian reconstructed from its vector field is therefore determined only up to an additive constant.

A differentiable map $\Phi:U\to\RR^{2d}$, defined on an open set $U\subseteq\RR^{2d}$, is called {\bfi symplectic} if $\Phi^*\omega=\omega$, where $\Phi^*$ denotes the pullback of differential forms by $\Phi$. Equivalently,
\begin{equation}\label{eq:symplecticity}
(D\Phi(z))^\top J\,D\Phi(z)=J \qquad\text{for all } z\in U.
\end{equation}
For a vector field $X$ on $U$, write $\iota_X$ for the {\bfi interior product} with $X$, so that $\iota_X\omega$ is the one-form $v\mapsto\omega(X,v)$, and $\mathcal L_X$ for the {\bfi Lie derivative} along $X$, defined on a differential form $\beta$ by $\mathcal L_X\beta:=\tfrac{\mathrm d}{\mathrm dt}\big|_{t=0}(\varphi_t^X)^*\beta$, with $\varphi_t^X$ the flow of $X$. In this notation, \eqref{eq:hamiltonian-vector-field} reads $\iota_{X_H}\omega=\mathrm dH$. Since $\omega$ is closed, Cartan's magic formula $\mathcal L_X=\iota_X\circ\mathrm d+\mathrm d\circ\iota_X$ gives
\[
\mathcal L_{X_H}\omega
=\iota_{X_H}\mathrm d\omega+\mathrm d\,\iota_{X_H}\omega
=\mathrm d(\mathrm dH)=0 ,
\]
so $(\varphi_t^H)^*\omega$ is independent of $t$ and hence equal to its value $\omega$ at $t=0$: the flow $\varphi_t^H$ is symplectic for every $t$ for which it is defined. Hamiltonian flows thus preserve both the Hamiltonian, by \eqref{eq:energy-conservation}, and the canonical symplectic form; taking $d$-th exterior powers in $\Phi^*\omega=\omega$ shows that any symplectic map also preserves the Liouville volume $\omega^d/d!$.

Condition \eqref{eq:symplecticity} admits a converse, which is the structural mechanism behind the construction of the inverse modified Hamiltonians below. Since $\iota_X\omega$ is the one-form $v\mapsto X^\top Jv$, its coefficient vector is $-JX$, and closedness $\mathrm d(\iota_X\omega)=0$ amounts to the symmetry of $D(-JX)$, that is,
\[
J\,DX+(DX)^\top J=0 .
\]
If in addition $U$ is simply connected, the Poincar\'e lemma yields $h\in C^\infty(U)$, unique up to an additive constant, with $\iota_X\omega=\mathrm dh$, that is, $X=J\nabla h$. Symplecticity of a one-step map is therefore exactly the structural information needed to recognize its associated discrete velocity field as the Hamiltonian vector field of some function, which is how the inverse modified Hamiltonians are obtained below; see also \cite{HLW06,sanzserna1994numerical,leimkuhler2004simulating}.

We assume throughout that the trajectories considered below exist for all $|t|\le T$ and remain in a fixed compact set. Whenever an implicit one-step map is used, we work on the locally unique branch continuous at $\delta=0$ and on a domain where all relevant maps are well defined.

\subsection{Symplectic one-step methods}

We now specify the two symplectic integrators used throughout the paper. 

\begin{definition}[Symplectic integrator]
A {\bfi one-step method} $I_\delta$ for Hamiltonian systems assigns to each sufficiently smooth Hamiltonian $h:\RR^{2d}\to\RR$ and each step size $\delta$ a map $I_\delta^h:U_\delta^h\subseteq \RR^{2d}\to\RR^{2d}$, defined on an open set $U_\delta^h$, intended to approximate the time-$\delta$ flow $\varphi_\delta^h$ of the Hamiltonian vector field $X_h=J\nabla h$. It is a {\bfi symplectic integrator of order $r\ge1$} if, for every such $h$, the one-step map $I_\delta^h$ is symplectic and, on every compact set $\mathcal C$ on which both $I_\delta^h$ and $\varphi_\delta^h$ are defined,
\begin{equation*}
\sup_{z\in\mathcal C}
\bigl\|I_\delta^h(z)-\varphi_\delta^h(z)\bigr\|
\le C_{h,\mathcal C}|\delta|^{r+1}
\end{equation*}
for all sufficiently small $|\delta|$, where $C_{h,\mathcal C}$ is independent of $\delta$. For an implicit method, $I_\delta^h$ denotes the locally
unique branch continuous at $\delta=0$.
\end{definition} 

Two symplectic integrators are used throughout this work: the implicit midpoint rule and the $p$-implicit symplectic Euler rule. Both are classical, and we recall them only to fix the notation and the two structural properties that determine the behavior of their inverse modified Hamiltonians: the time symmetry of the former and its absence for the latter. For their derivation, symplecticity, and order, we refer to the standard monographs \cite{HLW06,sanzserna1994numerical,leimkuhler2004simulating}; the symplecticity of the symplectic Euler variants goes back to \cite{devogelaere1956}, and that of the midpoint rule, as the one-stage Gauss collocation method, to \cite{lasagni1988canonical,sanzserna1988rk,suris1988conservation}.

\paragraph{The implicit midpoint rule.}

For a Hamiltonian $h:\RR^{2d}\to\RR$, the implicit midpoint rule is defined on the locally unique branch continuous at $\delta=0$ by
\begin{equation*}
I_\delta^h(z) = z' \quad\text{such that}\quad z' = z + \delta\, J\nabla h\!\left(\frac{z+z'}{2}\right).
\end{equation*}
Writing $z_{\mathrm{mid}}:=(z+z')/2$ and $v_\delta:=(z'-z)/\delta$, the rule becomes
\begin{equation*}
v_\delta = J\nabla h(z_{\mathrm{mid}}).
\end{equation*}

The midpoint rule is a second-order symplectic Runge--Kutta method and is symmetric in time,
$I_{-\delta}^h\circ I_\delta^h = \mathrm{Id}$; see \cite{HLW06}. Consequently, its normalized inverse modified Hamiltonian expansions contain only even powers of \(\delta\).

\paragraph{The \texorpdfstring{$p$}{p}-implicit symplectic Euler rule.}

Throughout the main
analysis we use the $p$-implicit variant; its $q$-implicit adjoint is invoked only for the
sign-reversal and averaging observations below. For
\(h:\mathbb R^{2d}\to\mathbb R\), the \(p\)-implicit rule maps \(z=(q,p)\) to
\(z'=(Q,P)\) through
\begin{equation}\label{eq:symplectic-euler}
\begin{aligned}
 P&=p-\delta\,\partial_qh(q,P),\\
 Q&=q+\delta\,\partial_ph(q,P).
\end{aligned}
\end{equation}
We write \(I_\delta^h(q,p)=(Q,P)\) for its locally unique branch continuous at
\(\delta=0\).  Introducing the mixed point and discrete velocity $x_{\mathrm{SE}}:=(q,P)$ and $v_\delta:=(Q-q,P-p)/\delta$,  equation~\eqref{eq:symplectic-euler} becomes
\begin{equation*}
 v_\delta=J\nabla h(x_{\mathrm{SE}}).
\end{equation*}

The $p$-implicit symplectic Euler rule is a first-order symplectic
integrator, but is not symmetric in time \cite{HLW06}.  Its adjoint is the \(q\)-implicit variant
\[
 Q=q+\delta\,\partial_ph(Q,p),
 \qquad
 P=p-\delta\,\partial_qh(Q,p),
\]
whose observable mixed input is \((Q,p)\).  The two variants have opposite first-order corrections in their inverse modified Hamiltonians.

When we consider the implicit midpoint and symplectic Euler methods simultaneously, we use the index $\Int\in\{\MP,\SE\}$ and write $I_\delta^{h,\Int}$ for the corresponding one-step map. 
Although both methods are implicit when used for time stepping, their evaluation points and discrete velocities are determined directly once the two endpoints of a step are available. Thus, {\it no implicit equation needs to be solved to form these quantities from a given endpoint pair}. This leads to the following common endpoint-based representation.

\begin{definition}[Scheme-adapted  anchor]
\label{def:regression-anchor}
Let \(z^-=(q,p)\) and \(z^+=(Q,P)\) be an endpoint pair separated by a step \(\delta\). The associated {\bfi discrete velocity} is $v_\delta(z^-,z^+):=(z^+-z^-)/\delta$.
For the selected integrator \(\Int\in\{\MP,\SE\}\), its
{\bfi scheme-adapted  anchor} is
\begin{equation*}
 x^{\Int}(z^-,z^+):=
 \begin{cases}
 \frac12(z^-+z^+),&\Int=\MP,\\[2pt]
 (q,P),&\Int=\SE.
 \end{cases}
\end{equation*}
The anchor is \emph{observable} in the sense that it is computed entirely from the two endpoints;
it need not itself be a state on the observed trajectory. It is \emph{scheme-adapted} because it
is precisely the point at which the selected integrator evaluates the  Hamiltonian
vector field.  Consequently, whenever $z^+=I_\delta^{h,{\Int}}(z^-)$, the endpoints satisfy
\begin{equation*}
v_\delta(z^-,z^+)=J\nabla h\!\left(x^{\Int}(z^-,z^+)\right).
\end{equation*}
\end{definition}
\subsection{Inverse modified Hamiltonians}

Backward error analysis relates a numerical one-step map to a nearby continuous-time system through formal power series in the step size; see \cite{HLW06,benettin1994hamiltonian}. In the present inverse problem, the relevant relation goes in the opposite direction: given the exact time-$\delta$ flow $\varphi_\delta^H$, we seek a Hamiltonian whose numerical one-step map reproduces this flow. This motivates the following notion.

\begin{definition}[Formal inverse modified Hamiltonian]\label{def:inv-modified-ham}
Let $H\in C^\infty(\RR^{2d})$, and let $I_\delta$ be a consistent, sufficiently regular symplectic method admitting a formal modified equation. The {\bfi inverse modified Hamiltonian} is a formal power series
\[
\hypertarget{sym-checkH-formal}{\check{H}^{\mathrm{form}}}(z;\delta)
= H(z) + \delta\check{H}_1(z) + \delta^2\check{H}_2(z) + \delta^3\check{H}_3(z) + \cdots
\]
such that, again coefficientwise,
\begin{equation*}
I_\delta^{\check{H}^{\mathrm{form}}}(z)
\mathrel{\overset{\mathrm{formal}}{=}}\varphi^H_\delta(z).
\end{equation*}
The series is determined only up to a formal $\delta$-dependent
additive constant.
\end{definition}

Definition~\ref{def:inv-modified-ham} does not assert convergence of
the formal series or exact equality at a fixed nonzero step size.
For the statistical problem studied below, however, we require a
genuine fixed-step target satisfying the inverse relation exactly.
We therefore impose the following analytic setting.


\begin{assumption}[Analytic setting]
\label{ass:analytic-setting}
Let $\mathcal K\subset\RR^{2d}$ be a compact convex set with nonempty interior. Assume that, for some $r_0>0$, the Hamiltonian $H$ extends holomorphically to the complex neighborhood
\begin{equation*}
\mathcal K_{r_0}:=
\left\{
z\in\CC^{2d}:\operatorname{dist}(z,\mathcal K)<r_0
\right\},
\end{equation*}
and that this extension is real-valued on real arguments.
\end{assumption}

Under Assumption~\ref{ass:analytic-setting}, the Hamiltonian vector field $f:=J\nabla H$ is holomorphic on $\mathcal K_{r_0}$ and bounded on every smaller tube
$\mathcal K_r$, $0<r<r_0$. 
Exact local inverse modified Hamiltonians for the schemes considered have been constructed in related settings in \cite{david2023symplectic,offen2021symplectic}. For the present analysis, we use versions defined on a common analytic neighborhood and with the quantitative coefficient bounds required later in the RKHS estimates. The corresponding constructions are given in Appendix~\ref{sec:bae}.

\begin{proposition}[Exact inverse modified Hamiltonians]
\label{prop:exact-inverse-targets}
Suppose Assumption~\ref{ass:analytic-setting} holds. For each $\Int\in\{\MP,\SE\}$, there exist $\delta_*^\Int>0$ and a normalized analytic family $\check H_\delta^\Int$, unique up to a $\delta$-dependent additive constant, such that for a fixed $z_\star\in\mathcal K$, $\check H_\delta^\Int(z_\star)=H(z_\star),$
and
\begin{equation*}
I_\delta^{\check H_\delta^\Int,\Int}(z)= \varphi_\delta^H(z), \qquad z\in\mathcal K,\quad |\delta|<\delta_*^\Int.
\end{equation*}

Moreover, the midpoint family is even in $\delta$, $\check H_{-\delta}^{\MP}=\check H_\delta^{\MP},$
and, with $f:=J\nabla H$ and $G(z):=f(z)^\top\Hess H(z)f(z)$,
\begin{align*}
\check H_\delta^{\MP}(z)
&=
H(z)
+
\frac{\delta^2}{24}
\bigl(G(z)-G(z_\star)\bigr)
+
\mathcal O_{C^0(\mathcal K)}(\delta^4),
\\
\check H_\delta^{\SE}(z)
&=
H(z)
+
\frac{\delta}{2}
\Bigl(
\langle H_q(z),H_p(z)\rangle
-
\langle H_q(z_\star),H_p(z_\star)\rangle
\Bigr)
+
\mathcal O_{C^0(\mathcal K)}(\delta^2).
\end{align*}
\end{proposition}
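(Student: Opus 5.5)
The plan is to build the inverse Hamiltonians by inverting the anchor map, not by summing the formal series of Definition~\ref{def:inv-modified-ham}. For each scheme we introduce the analytic map $\Psi_\delta^\Int$ sending an initial state $z$ to the anchor $x^\Int(z,\varphi_\delta^H(z))$. For $\MP$ this is $\Psi_\delta^{\MP}(z)=\tfrac12(z+\varphi_\delta^H(z))$. For $\SE$ it is $\Psi_\delta^{\SE}(q,p)=(q,P_\delta(q,p))$, where $P_\delta$ is the momentum component of $\varphi_\delta^H$.

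Under Assumption~\ref{ass:analytic-setting}, the flow $\varphi_\delta^H$ is jointly holomorphic in $(z,\delta)$ on $\mathcal K_{r}\times\{|\delta|<\delta_0\}$ for any $r<r_0$. This follows from Cauchy estimates on the bounded field $f$. Both anchor maps satisfy $\Psi_\delta^\Int=\mathrm{Id}+\mathcal O(\delta)$ uniformly on $\mathcal K_r$.

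A quantitative holomorphic inverse function theorem then applies: a contraction-mapping argument on the tube, with the $\mathcal O(\delta)$ perturbation controlled by Cauchy estimates. It gives $\delta_*^\Int$ and an analytic inverse $(\Psi_\delta^\Int)^{-1}$ defined on a slightly smaller tube containing $\mathcal K_{r/2}$. On that tube we define the discrete velocity field
\[
X_\delta^\Int(x):=\frac{\varphi_\delta^H(z)-z}{\delta},\qquad z=(\Psi_\delta^\Int)^{-1}(x).
\]
This field is analytic in $(x,\delta)$, the singularity at $\delta=0$ being removable, and $X_0^\Int=f$.

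The key step is to show that $X_\delta^\Int$ is Hamiltonian. Here we use the converse of \eqref{eq:symplecticity} recalled earlier: it suffices that $J\,DX_\delta^\Int$ is symmetric.

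For $\MP$ we use the Cayley-type relation. Write $A=D\varphi_\delta^H(z)$, which is symplectic. Then
\[
\delta\, DX_\delta^{\MP}=2(A-I)(A+I)^{-1},
\]
and for symplectic $A$ the matrix $J(A-I)(A+I)^{-1}$ is symmetric. This is a short linear-algebra check using $A^\top JA=J$.

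For $\SE$ we instead use generating functions. Because $\varphi_\delta^H$ is symplectic and $\partial P/\partial p$ is invertible for small $\delta$, the map has a type-$(q,P)$ generating function $S_\delta(q,P)$ with $p=\partial_q S$ and $Q=\partial_P S$. Setting $\check H_\delta^{\SE}:=(S_\delta-q\cdot P)/\delta$, suitably signed, gives $J\nabla \check H^{\SE}_\delta=X^{\SE}_\delta$ directly.

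Since $\mathcal K$ is convex, the relevant domain is simply connected, so the Poincaré lemma produces a primitive. We fix the constant by $\check H_\delta^\Int(z_\star)=H(z_\star)$, obtaining for instance the line integral $\check H_\delta^\Int(x)=H(z_\star)+\int_0^1 \langle -JX_\delta^\Int(z_\star+t(x-z_\star)),x-z_\star\rangle\,dt$. This primitive is jointly analytic in $(x,\delta)$, and uniqueness up to constants follows because the gradient is determined.

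Evenness of the midpoint family comes from time symmetry. Replacing $\delta$ by $-\delta$ and swapping endpoints gives $\Psi_{-\delta}^{\MP}(\varphi_\delta^H(z))=\Psi_\delta^{\MP}(z)$, because $\varphi_{-\delta}^H\circ\varphi_\delta^H=\mathrm{Id}$. This yields $X_{-\delta}^{\MP}=X_\delta^{\MP}$, and hence evenness after normalization.

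For the expansions we Taylor-expand the flow, $\varphi_\delta^H(z)=z+\delta f+\tfrac{\delta^2}{2}f'f+\tfrac{\delta^3}{6}(f''(f,f)+f'f'f)+\cdots$, and invert $\Psi_\delta$ perturbatively. For $\MP$ this gives
\[
X_\delta^{\MP}(x)=f(x)+\delta^2\Bigl(\tfrac1{6}f'f'f-\tfrac1{12}f''(f,f)\Bigr)(x)+\mathcal O(\delta^4).
\]
We then identify this correction as $\tfrac1{24}J\nabla G$ with $G=f^\top\Hess H\,f$, using $f=J\nabla H$ and the standard $B$-series/elementary-differential identity $J\nabla(\nabla H^\top J^\top\Hess H J\nabla H)=2f''(f,f)$ combined with the Hamiltonian symmetry of $f'$. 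Up to sign conventions this is the known modified-equation coefficient of midpoint with its sign flipped.

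For $\SE$, first-order inversion gives $X_\delta^{\SE}=f+\delta\, c_1+\mathcal O(\delta^2)$, where $c_1$ collects the mixed-derivative terms of $f'f$. A direct check shows that $c_1=\tfrac12 J\nabla\langle H_q,H_p\rangle$.

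The uniform $C^0(\mathcal K)$ remainders come from the Cauchy estimates in $\delta$ on the joint holomorphic family. I expect the main obstacle to be the quantitative inverse function step on a common complex tube, with a $\delta_*^\Int$ independent of $z$, together with keeping the sign and convention bookkeeping correct in the $\SE$ generating-function identity.
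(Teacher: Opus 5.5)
\paragraph{Gap: the midpoint $\delta^2$ coefficient.}
The explicit midpoint coefficient, which is part of the statement, is miscomputed in your proposal.

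Since $m-z=\tfrac{\delta}{2}f+\tfrac{\delta^2}{4}f'f+\mathcal O(\delta^3)$, re-expand $(\varphi_\delta^H(z)-z)/\delta=f+\tfrac{\delta}{2}f'f+\tfrac{\delta^2}{6}\bigl(f''(f,f)+f'f'f\bigr)+\cdots$ about $m$. This gives
\[
X_\delta^{\MP}=f+\delta^2\Bigl(-\tfrac{1}{12}f'f'f+\tfrac{1}{24}f''(f,f)\Bigr)+\mathcal O(\delta^4),
\]
which is minus the classical forward midpoint coefficient. Your $\tfrac16 f'f'f-\tfrac1{12}f''(f,f)$ is instead twice the forward coefficient. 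A check with $f(y)=y^2$, $\varphi_\delta(z)=z/(1-\delta z)$ confirms the corrected version.

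Your identity is also false. Differentiating $G=f^\top\Hess H\,f$ gives $\nabla G=-2\Hess H\,J\Hess H\,f+D^3H(f,f,\cdot)$, so $J\nabla G=f''(f,f)-2f'f'f$, not $2f''(f,f)$. For the harmonic oscillator, $f''=0$ while $J\nabla G=2\omega^2 f$.

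The two mistakes do not cancel. Your correction equals $-\tfrac1{12}J\nabla G$. For the oscillator this would give $\check H_\delta^{\MP}=(1-\omega^2\delta^2/6)H$ instead of $\tfrac{2\tan(\omega\delta/2)}{\omega\delta}H=(1+\omega^2\delta^2/12)H+\mathcal O(\delta^4)$. No sign convention repairs a factor of $-2$.

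With the corrected expansion, the $\delta^2$ term is exactly $\tfrac1{24}J\nabla G$, which integrates to $\tfrac{\delta^2}{24}(G-G(z_\star))$. Evenness and the Cauchy estimates in $\delta$ then give the $\mathcal O_{C^0(\mathcal K)}(\delta^4)$ remainder. The paper does not compute this coefficient itself; it imports it from the forward expansion in \cite{HLW06}.

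\paragraph{Smaller point.}
You should also say why $\varphi_\delta^H(z)$ is the branch of the implicit step continuous at $\delta=0$. For example, apply the implicit function theorem to $m-z-\tfrac{\delta}{2}X_\delta^{\MP}(m)=0$, as the paper does.

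\paragraph{What is fine.}
The rest of your plan follows Appendix~\ref{sec:bae}:
\begin{itemize}
\item inverting the anchor map analytically;
\item getting closedness of the chord form from symplecticity of the complexified flow;
\item taking the normalized line integral on a convex tube;
\item getting evenness from $\varphi_{-\delta}^H\circ\varphi_\delta^H=\mathrm{Id}$.
\end{itemize}
Your Cayley identity $\delta\,DX_\delta^{\MP}=2(A-I)(A+I)^{-1}$ is the paper's pullback identity $y_\delta^*\omega=x_\delta^*\omega$ written in terms of $A=D\varphi_\delta^H$. Your type-$(q,P)$ generating function is the paper's closed form in disguise, $\beta_\delta=d\bigl((S_\delta-q\cdot P)/\delta\bigr)$; the sign as you wrote it is already correct. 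Your $\SE$ coefficient $\tfrac12 J\nabla\langle H_q,H_p\rangle$ is also right.
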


\begin{proof}
The midpoint statements follow from
Proposition~\ref{prop:imh-analytic} and Corollary \ref{cor:imh-native-bound} in Appendix~\ref{sec:bae}, while the symplectic Euler
statements follow from Proposition~\ref{prop:imh-analytic-se} and Corollary \ref{cor:imh-native-bound-se}.
\end{proof}

Combining Proposition~\ref{prop:exact-inverse-targets} with Definition~\ref{def:regression-anchor}, every exact-flow pair $z^+=\varphi_\delta^H(z^-)$ satisfies
\begin{equation}\label{eq:exact-regression-target}
v_\delta(z^-,z^+)=J\nabla\check H_\delta^\Int \bigl(x^\Int(z^-,z^+)\bigr).
\end{equation}
Thus, although the endpoint pair is generated by the exact flow of $H$, it satisfies the discrete relation of the selected integrator exactly with $\check H_\delta^\Int$ in place of $H$. In particular, the fixed-step inverse target is integrator-dependent and differs from the physical Hamiltonian by
\[
\check H_\delta^{\MP}-H=\mathcal O(\delta^2),
\qquad
\check H_\delta^{\SE}-H=\mathcal O(\delta),
\]
up to additive constants.
Since only the Hamiltonian vector field enters \eqref{eq:exact-regression-target}, $\check H_\delta^\Int$ is identifiable only up to an additive constant. The normalization at $z_\star$ selects a representative for comparison with $H$; the corresponding identifiability issue in the RKHS reconstruction is addressed in Section~\ref{sec:learning}.

The midpoint coefficient follows from the classical forward coefficient
\cite{HLW06,offen2021symplectic} by reversing the formal identity, while the Euler coefficient is derived in Proposition~\ref{prop:imh-analytic-se}. The adjoint $q$-implicit Euler rule reverses the sign of the linear correction. Appendix~\ref{sec:bae} contains the analytic constructions and the quantitative coefficient and target-bias bounds.

\begin{example}[Harmonic oscillator: comparison of exact targets]
Let
\[
 H(q,p)=\frac12(p^2+\omega^2q^2),
 \qquad \omega>0,\qquad z_\star=(0,0).
\]
On the branches continuous at \(\delta=0\), the two normalized exact targets are
\begin{align*}
 \check H_\delta^{\MP}
 &=\frac{2\tan(\omega\delta/2)}{\omega\delta}\,H,
 &&|\omega\delta|<\pi,\notag\\
 \check H_\delta^{\SE}(q,p)
 &=\frac{\tan(\omega\delta)}{\omega\delta}\,H(q,p)
   +\frac{\sec(\omega\delta)-1}{\delta}\,qp,
 &&|\omega\delta|<\frac{\pi}{2}.
\end{align*}
Their expansions are
\begin{equation*}
 \check H_\delta^{\MP}
 =\left(1+\frac{\omega^2\delta^2}{12}\right)H+\mathcal O(\delta^4), \quad \mbox{and} \quad
 \check H_\delta^{\SE}
 =H+\frac{\omega^2\delta}{2}\,qp
   +\frac{\omega^2\delta^2}{3}H+\mathcal O(\delta^3).
\end{equation*}
Thus midpoint produces a quadratic rescaling of the Hamiltonian, whereas symplectic Euler introduces a first-order cross-term. At $|\omega\delta|=\pi$, the midpoint construction becomes singular, while at $|\omega\delta|=\pi/2$ the corresponding Euler mixed-point construction becomes singular. These failures illustrate the local small-step character of the exact inverse modified Hamiltonians.
\end{example}

\section{RKHS reconstruction from random trajectories}
\label{sec:learning}

We now formulate the inverse problem studied in this paper. The goal is to infer an unknown Hamiltonian function $H:\RR^{2d}\to\RR$ from trajectory data of the Hamiltonian system \eqref{eq:intro-hamiltonian} sampled at a fixed time step $\delta>0$.
In the analysis, we denote the time-$\delta$ data-generating map by the exact flow $\varphi_\delta^H$, but we do not assume that the estimator knows or can evaluate this map. From the sampled trajectory states alone, we seek to construct an estimator of the dynamical system \eqref{eq:intro-hamiltonian} that preserves its Hamiltonian structure and is compatible with the symplectic discretization used in the model.

The reconstruction is built from one of the two symplectic one-step methods introduced in Section~\ref{sec:hamiltonian}. Given consecutive exact trajectory states, the corresponding scheme-adapted anchor and discrete velocity can be formed directly from the endpoints. By Proposition~\ref{prop:exact-inverse-targets} and \eqref{eq:exact-regression-target}, these quantities satisfy the selected one-step relation exactly with the inverse modified Hamiltonian $\check H_\delta^\Int$. The learning problem below therefore first reconstructs this fixed-step target in an RKHS and subsequently relates it to the physical Hamiltonian $H$ as $\delta\to0$.

\subsection{Random trajectories and scheme-adapted observations}

We first formulate the learning problem in terms of latent exact trajectories.
Let $z_0^{(1)},\ldots,z_0^{(N)} \stackrel{\mathrm{i.i.d.}}{\sim}\mu_0,$ where $\mu_0$ is a probability measure on phase space $\RR^{2d}$, and define
\begin{equation*}
z_l^{(n)}:=\varphi_\delta^H\bigl(z_{l-1}^{(n)}\bigr),\qquad n=1,\ldots,N,\quad l=1,\ldots,L.
\end{equation*}
Thus, the $N$ trajectories are independent, whereas the states within each trajectory are generally dependent. Later on in Section~\ref{sec:deterministic-perturbations} we shall also allow the recorded trajectory states to be corrupted by additive observational noise of the form
\begin{equation}\label{eq:noisy-observation}
\widetilde z_l^{(n)}=z_l^{(n)}+e_l^{(n)},\qquad n=1,\ldots,N,\quad l=0,\ldots,L,
\end{equation}
where $e_l^{(n)}$ are $\RR^{2d}$-valued random variables.
In this section, we focus on the reconstruction problem for the exact states $z_l^{(n)}$. This preserves the scheme-adapted relation with the inverse modified Hamiltonian and isolates the randomness to that arising exclusively from sampling the initial conditions. Accordingly, the clean trajectory dataset is
\begin{align}\label{eq:data-generation}
\mathcal D_{N,L,\delta}
=
\left\{
z_l^{(n)}
:
n=1,\ldots,N,\quad l=0,\ldots,L
\right\},
\end{align}
and the estimator does not require knowledge or evaluation of the flow map
$\varphi_\delta^H$ itself. Fix $\Int\in\{\MP,\SE\}$. From each consecutive pair $(z_{l-1}^{(n)},z_l^{(n)})$, we form the scheme-adapted anchor
\begin{equation*}
x_l^{\Int,(n)}:=x^{\Int}\bigl(z_{l-1}^{(n)},z_l^{(n)}\bigr),\qquad  v_{\delta,l}^{(n)}:=\frac{z_l^{(n)}-z_{l-1}^{(n)}}{\delta}.
\end{equation*}
Hence both the regression inputs and the discrete velocities are obtained directly from the observed endpoints. In particular, no implicit equation needs to be solved when forming these quantities from the trajectory data.

By Proposition~\ref{prop:exact-inverse-targets} and \eqref{eq:exact-regression-target}, whenever $|\delta|<\delta_*^\Int$, the scheme-adapted observations satisfy
\begin{equation}\label{eq:exact-scheme-adapted-regression}
v_{\delta,l}^{(n)}= J\nabla \HInt_\delta   \bigl(x_l^{\Int,(n)}\bigr), \qquad    n=1,\ldots,N,\quad l=1,\ldots,L.
\end{equation}
Thus, at fixed $\delta$, the exact-flow trajectory pairs provide exact derivative observations of the integrator-dependent inverse target $\HInt_\delta$. In general, they do not provide exact derivative observations of the physical Hamiltonian $H$ itself.

To describe the distribution of the regression inputs, define the anchor maps
\begin{equation}\label{eq:population-anchor-map}
X_l^{\Int}(z_0):= x^{\Int}\left(        \varphi_{(l-1)\delta}^H(z_0), \varphi_{l\delta}^H(z_0)\right), \qquad l=1,\ldots,L.
\end{equation}
The corresponding averaged one-point distribution is
\begin{equation}\label{eq:rho-generic}
\rho^\Int:=\frac{1}{L}\sum_{l=1}^L(X_l^\Int)_\#\mu_0.
\end{equation}
The measure $\rho^{\Int}$ will serve as the population input distribution in Section~\ref{sec:main-results}. Although the $N$ trajectories are independent, the $L$ anchors generated by a single trajectory are generally dependent. Accordingly, the error analysis treats each trajectory, rather than each individual anchor, as one independent observation block.

\subsection{RKHS formulation and regularized problems}
\label{sec:rkhs-formulation}

We now formulate the reconstruction problem in a reproducing kernel Hilbert space (RKHS). Since the scheme-adapted observations in \eqref{eq:exact-scheme-adapted-regression} involve first derivatives of the Hamiltonian, we choose a {\bfi native space} in which derivative evaluation is continuous and into which the analytic Hamiltonian and inverse modified targets can be embedded.

\begin{assumption}[Learning domain and native space]
\label{ass:domain}
Let $\Omega\subset\RR^{2d}$ be a bounded, connected Lipschitz domain such that $\overline{\Omega}\subset\operatorname{int}\mathcal K$, where $\mathcal K$ is the compact analytic set of Assumption~\ref{ass:analytic-setting}.
We assume that $\operatorname{supp}\rho^\Int\subset\Omega,$ for $\Int\in\{\MP,\SE\}.$
Let $K:\overline{\Omega}\times\overline{\Omega}\to\RR$ be a $C^2$ positive-semidefinite kernel whose RKHS $\HK=\HK(\Omega)$ is norm-equivalent to $H^s(\Omega)$ for some $s>d+1$.
\end{assumption}

\begin{remark}[Derivative bounds]\label{rem:derivative-bounds}
Suppose that the Sobolev embedding implied by Assumption~\ref{ass:domain} holds with $s>d+2$. Then $\HK\hookrightarrow C^2(\overline{\Omega}).$
Consequently, there exist constants $\kappa_1,\kappa_2>0$ such that
\begin{equation}\label{eq:gradient-perturbation-bounds}
\|J\nabla h(x)\|\le\kappa_1\|h\|_{\HK},\qquad\|J\nabla h(x)-J\nabla h(y)\|\le\kappa_2 |x-y|\,\|h\|_{\HK},
\end{equation}
for all $h\in\HK$ and all $x,y\in\Omega$ for which the line segment joining $x$ and $y$ is contained in $\Omega$.
The second estimate follows from the uniform bound on the Hessian provided by the Sobolev embedding and the mean value theorem.
\end{remark}

\begin{example}[Mat\'ern kernels]\label{Matern kernels}
A standard class satisfying Assumption~\ref{ass:domain} is given by Mat\'ern kernels
\[
K_\nu(x,y)=\frac{2^{1-\nu}}{\Gamma(\nu)}\left(\frac{\sqrt{2\nu}}{\ell}\|x-y\|\right)^\nu\mathsf K_\nu\left(\frac{\sqrt{2\nu}}{\ell}\|x-y\|\right), \qquad \nu,\ell>0,
\]
where $\mathsf K_\nu$ denotes the modified Bessel function of the second kind and $\ell$ is the length-scale parameter. On a bounded Lipschitz domain $\Omega\subset\RR^{2d}$, the RKHS of the restricted Mat\'ern kernel is norm-equivalent to $\HK(\Omega)\cong H^{\nu+d}(\Omega);$
see \cite[Cor.~10.48]{wendland2005scattered} and \cite[Sec.~2]{fischer2020sobolev}. Hence choosing $\nu>1$ gives $s=\nu+d>d+1$, as required in Assumption~\ref{ass:domain}.
\end{example}

Together, Assumptions~\ref{ass:analytic-setting} and~\ref{ass:domain} ensure that, for all sufficiently small $|\delta|$, both the physical Hamiltonian $H$ and the exact inverse targets $\HInt_\delta$ belong to the hypothesis space $\HK$. Indeed, their analyticity on a neighborhood of $\overline{\Omega}$ implies membership in $H^s(\Omega)$, while $\HK$ is norm-equivalent to $H^s(\Omega)$. 
Moreover, Corollaries~\ref{cor:imh-native-bound} and~\ref{cor:imh-native-bound-se} yield \begin{equation}\label{eq:generic-native-bias}
\|\HInt_\delta-H\|_{\HK}=\mathcal O(|\delta|^{r_\Int}),\quad \mbox{with} \quad r_{\MP}=2,\qquad r_{\SE}=1.
\end{equation}
Since $s>d+1$ and the phase-space dimension is $2d$, the Sobolev embedding $\HK\hookrightarrow C^1(\overline{\Omega})$ guarantees that the derivative observations below are well defined and continuous on $\HK$.

Given the random trajectories  \eqref{eq:data-generation} or equivalently the scheme-adapted observations \eqref{eq:exact-scheme-adapted-regression}, we consider the following empirical regularized minimization problem 
\begin{align}
\widehat h_{\lambda,N,L}^{\Int}&:=\arg\min_{h\in\HK}\widehat R_{\lambda,N,L}^{\Int}(h), \label{eq:emp-pro}\\
\widehat R_{\lambda,N,L}^{\Int}(h)&:= \frac{\delta^2}{NL} \sum_{n=1}^N\sum_{l=1}^L \left\|A^\Int h(x_l^{\Int,(n)})-v_{\delta,l}^{(n)}\right\|^2+\lambda\|h\|_{\HK}^2, \label{eq:emp-fun}    
\end{align}
where $A^\Int h:=J\nabla h$ denotes the Hamiltonian operator and $\lambda>0$ is the Tikhonov regularization parameter. The corresponding population regularized problem is
\begin{align}
h_{\lambda,L}^{*,\Int}&:=\arg\min_{h\in\HK}R_{\lambda,L}^{\Int}(h), \label{eq:exp-pro}\\
R_{\lambda,L}^{\Int}(h)&:=\delta^2\|A^\Int h-A^\Int \HInt_\delta\|_{L^2(\rhoInt)}^2+\lambda\|h\|_{\HK}^2\label{eq:exp-fun}.
\end{align}
The factor $\delta^2$ in \eqref{eq:emp-fun} and \eqref{eq:exp-fun} makes the data-fit term equal to the squared one-step endpoint defect,
\[
\delta^2\|A^\Int h(x_l)-v_{\delta,l}\|^2 = \|z_l-z_{l-1}-\delta A^\Int h(x_l)\|^2,
\]
and leads to the effective regularization parameter $\eta=\lambda/\delta^2$.

\subsection{Operator and kernel representations}
\label{sec:estimator-representations}

We now derive two complementary representations of the regularized estimators of the minimization problems \eqref{eq:emp-pro} and \eqref{eq:exp-pro}.
An operator formulation is convenient for the subsequent statistical error analysis, while the kernel representation reduces the empirical problem to a finite-dimensional linear system suitable for computation. Both rely on the same differential RKHS structure induced by the Hamiltonian observation operator $A^\Int h=J\nabla h$.

\paragraph{Operator framework.}
The basic RKHS operator machinery follows \cite{hu2024hamiltonian}; we adapt it here to the scheme-dependent input distributions and trajectory-based observations.
For each $\Int\in\{\MP,\SE\}$, under Assumption~\ref{ass:domain}, $A^\Int:\HK\longrightarrow L^2(\rho^\Int;\RR^{2d})$ is a bounded linear operator, with adjoint
\begin{equation*}
(A^\Int)^*g=\int_\Omega g(x)^\top J\nabla_1K(x,\cdot)\, \mathrm d\rho^\Int(x), \qquad g\in L^2(\rho^\Int;\RR^{2d}).
\end{equation*}
Accordingly, the {\bfi population covariance operator} is given by 
\begin{equation}
\label{population covariance operator}
B^\Int:=(A^\Int)^*A^\Int:\HK\longrightarrow\HK, \qquad B^\Int h = \int_\Omega \nabla h(x)^\top\nabla_1K(x,\cdot)\, \mathrm d\rho^\Int(x).
\end{equation}
Moreover, $B^\Int$ is positive semidefinite, self-adjoint, and trace class, and hence compact.

For the empirical problem, define
\begin{equation}\label{eq:operator-AN}
A_{NL}^{\Int}:\HK\longrightarrow\RR^{2dNL}, \qquad A_{NL}^{\Int}h := \frac{1}{\sqrt{NL}} \operatorname{Vec}\left(J\nabla h(x_1^{\Int,(1)}),\ldots, J\nabla h(x_L^{\Int,(N)})\right),
\end{equation}
and $B_{NL}^{\Int}:=(A_{NL}^{\Int})^*A_{NL}^{\Int}.$
By the same differential reproducing argument, $A_{NL}^{\Int}$ is bounded, while $B_{NL}^{\Int}$ is positive semidefinite, self-adjoint, and finite rank. Set
\begin{align}\label{eq:observation-vector}
v_{NL}:=\operatorname{Vec}\bigl(v_{\delta,1}^{(1)},\ldots,v_{\delta,L}^{(1)},\ldots, v_{\delta,1}^{(N)},\ldots,v_{\delta,L}^{(N)}\bigr).    
\end{align}

\begin{proposition}[Operator representations]
\label{prop:operator-representations}
Let $\Int\in\{\MP,\SE\}$ and set $\eta:=\frac{\lambda}{\delta^2}$.
Then the empirical and population minimization problems \eqref{eq:emp-pro} and \eqref{eq:exp-pro} admit unique solutions, given by
\begin{equation}\label{eq:empirical-operator-representation}
\begin{aligned}
\widehat h_{\lambda,N,L}^{\Int} &=\frac{1}{\sqrt{NL}}\left(B_{NL}^{\Int}+\eta I\right)^{-1}(A_{NL}^{\Int})^*v_{NL},\\
h_{\lambda,L}^{*,\Int}&=\left(B^\Int+\eta I\right)^{-1} B^\Int\HInt_\delta.
\end{aligned}
\end{equation}
\end{proposition}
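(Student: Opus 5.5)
We rewrite both functionals as quadratic forms on $\HK$ and read off the normal equations. Strict convexity, which comes from $\lambda>0$ and $B\succeq 0$, will give uniqueness.

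\textbf{Empirical problem.} By definition \eqref{eq:operator-AN}, $A_{NL}^\Int h$ stacks the vectors $J\nabla h(x_l^{\Int,(n)})$ scaled by $(NL)^{-1/2}$. Hence
\[
\frac{1}{NL}\sum_{n,l}\bigl\|A^\Int h(x_l^{\Int,(n)})-v_{\delta,l}^{(n)}\bigr\|^2=\Bigl\|A_{NL}^\Int h-\tfrac{1}{\sqrt{NL}}v_{NL}\Bigr\|_{\RR^{2dNL}}^2,
\]
where the ordering of blocks in \eqref{eq:observation-vector} matches that in $A_{NL}^\Int$. Dividing $\widehat R^\Int_{\lambda,N,L}$ by $\delta^2>0$ leaves the minimizer unchanged and yields $J(h)=\|A_{NL}^\Int h-y\|^2+\eta\|h\|_{\HK}^2$ with $y=v_{NL}/\sqrt{NL}$.

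$A_{NL}^\Int$ is bounded because each map $h\mapsto\partial_i h(x)$ is continuous on $\HK$; this follows from the embedding $\HK\hookrightarrow C^1(\overline\Omega)$ given by Assumption~\ref{ass:domain}, or from the differential reproducing property. Consequently $J$ is a continuous, $\eta$-strongly convex quadratic functional on the Hilbert space $\HK$, so it has a unique minimizer. We find it by setting the Fréchet derivative to zero:
\[
2(A_{NL}^\Int)^*(A_{NL}^\Int h-y)+2\eta h=0,
\qquad\text{i.e.}\qquad
(B_{NL}^\Int+\eta I)h=(A_{NL}^\Int)^*y .
\]
The operator $B^\Int_{NL}+\eta I$ is invertible with $\|(B^\Int_{NL}+\eta I)^{-1}\|\le\eta^{-1}$, since $B_{NL}^\Int$ is positive semidefinite and self-adjoint (Lax–Milgram, or the spectral theorem). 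This gives the first formula.

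\textbf{Population problem.} We argue in the same way. $A^\Int:\HK\to L^2(\rho^\Int;\RR^{2d})$ is bounded, because $\sup_{x\in\overline\Omega}\|\nabla h(x)\|\lesssim\|h\|_{\HK}$ and $\rho^\Int$ is a probability measure supported in $\Omega$. By \eqref{eq:generic-native-bias} and the discussion after it, $\HInt_\delta\in\HK$, so $A^\Int\HInt_\delta$ is well defined. After dividing by $\delta^2$ the functional becomes $\|A^\Int(h-\HInt_\delta)\|^2_{L^2(\rho^\Int)}+\eta\|h\|^2$. Its normal equation is
\[
(B^\Int+\eta I)h=B^\Int\HInt_\delta ,
\]
and inverting the positive operator $B^\Int+\eta I$ gives the second formula. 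Uniqueness again follows from strong convexity.

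\textbf{Main obstacle.} The step that needs care is confirming that the adjoints have the stated forms and that $B^\Int$ coincides with \eqref{population covariance operator}. For this we use the derivative reproducing property $\partial_i h(x)=\langle h,\partial_{1,i}K(x,\cdot)\rangle_{\HK}$, valid for the $C^2$ kernel of Assumption~\ref{ass:domain}. It gives
\[
\langle A^\Int h,g\rangle_{L^2}=\int g^\top J\nabla h\,d\rho^\Int=\Bigl\langle h,\int g(x)^\top J\nabla_1K(x,\cdot)\,d\rho^\Int(x)\Bigr\rangle_{\HK},
\]
where moving the inner product outside the integral is justified by Bochner integrability, since $x\mapsto\nabla_1K(x,\cdot)$ is bounded and continuous. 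Then $J^\top J=I$ yields $B^\Int h=\int\nabla h(x)^\top\nabla_1K(x,\cdot)\,d\rho^\Int$. The remaining steps are routine.
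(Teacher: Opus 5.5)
Your proposal is correct and follows essentially the same route as the paper: divide by $\delta^2$ to expose $\eta=\lambda/\delta^2$, set the Fr\'echet derivative of the strictly (indeed $\eta$-strongly) convex quadratic functional to zero to obtain the normal equations $(B_{NL}^{\Int}+\eta I)\widehat h_{\lambda,N,L}^{\Int}=\frac{1}{\sqrt{NL}}(A_{NL}^{\Int})^*v_{NL}$ and $(B^\Int+\eta I)h_{\lambda,L}^{*,\Int}=B^\Int\HInt_\delta$, and invert the shifted positive semidefinite operators. Your extra checks of boundedness and of the adjoint and covariance formulas via the derivative reproducing property fill in details that the paper leaves implicit.
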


\begin{proof}
Both objective functionals are strictly convex and Fr\'echet differentiable. Setting their first variations equal to zero gives the corresponding normal equations. More precisely,
\[
(B_{NL}^{\Int}+\eta I)\widehat h_{\lambda,N,L}^{\Int}=\frac{1}{\sqrt{NL}}(A_{NL}^{\Int})^*v_{NL},\qquad (B^\Int+\eta I)h_{\lambda,L}^{*,\Int}=B^\Int\HInt_\delta.
\]
Since $B_{NL}^{\Int}$ and $B^\Int$ are positive semidefinite and $\eta>0$, both shifted operators are boundedly invertible.
\end{proof}

\paragraph{Kernel representation.}
We next convert the expression for the empirical estimator in \eqref{eq:empirical-operator-representation} into the solution of a finite-dimensional differential Gram-style kernel regression. Applying the differential representer theorem of \cite{hu2024hamiltonian} to the $NL$ scheme-adapted observations yields the following representation.
Define the block differential Gram matrix
\begin{equation*}
\mathbf G_{NL}^\Int :=\left[\nabla_{1,2}K\left(x_l^{\Int,(n)},x_{l'}^{\Int,(n')}\right)\right]_{(n,l),(n',l')}\in\RR^{2dNL\times2dNL},
\end{equation*}
where $\nabla_{1,2}K(x,y)=
\left[ \partial_{1,i}\partial_{2,j}K(x,y) \right]_{i,j=1}^{2d}.$
The matrix $\mathbf G_{NL}^\Int$ is positive semidefinite. We also set $\mathbb{J}_{NL}=\operatorname{diag}\{J,\ldots,J\}$ (with $NL$ diagonal copies).

\begin{proposition}[Kernel representation]
For every $\lambda>0$, the empirical estimator admits the representation
\begin{equation*}
\widehat h_{\lambda,N,L}^{\Int}=\sum_{n=1}^N\sum_{l=1}^L(c_l^{(n)})^\top J\nabla_1K \left(x_l^{\Int,(n)},\cdot\right),
\end{equation*}
where $c_l^{(n)}\in\RR^{2d}$. Writing $\mathbf c_{NL}:=\operatorname{Vec}\left(c_1^{(1)},\ldots,c_L^{(1)},\ldots, c_1^{(N)},\ldots,c_L^{(N)}\right),$ the coefficient vector is given by
\begin{equation*}
\mathbf c_{NL}=\left(\mathbb J_{NL}\mathbf G_{NL}^\Int\mathbb J_{NL}^\top + \eta NLI \right)^{-1}v_{NL}.
\end{equation*}
\end{proposition}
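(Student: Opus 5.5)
The plan is to derive the kernel representation directly from the operator formula in Proposition~\ref{prop:operator-representations}. The key algebraic tool will be the push-through identity
\[
(B_{NL}^{\Int}+\eta I)^{-1}(A_{NL}^{\Int})^*=(A_{NL}^{\Int})^*\bigl(A_{NL}^{\Int}(A_{NL}^{\Int})^*+\eta I\bigr)^{-1}.
\]
This identity moves the inversion from the infinite-dimensional space $\HK$ onto the finite-dimensional space $\RR^{2dNL}$. Once both $(A_{NL}^{\Int})^*$ and $A_{NL}^{\Int}(A_{NL}^{\Int})^*$ are written in terms of the kernel, the statement follows by substitution.

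\emph{Step 1: the adjoint via the differential reproducing property.} Under Assumption~\ref{ass:domain} we have $\HK\hookrightarrow C^1(\overline\Omega)$. Hence $\partial_{1,i}K(x,\cdot)\in\HK$ and $\partial_i h(x)=\langle h,\partial_{1,i}K(x,\cdot)\rangle_{\HK}$, which is the differential reproducing property of \cite{hu2024hamiltonian}. Index the observations by $k=(n,l)$ and write $w=\operatorname{Vec}(w_k)\in\RR^{2dNL}$. Then
\[
\langle A_{NL}^{\Int}h,w\rangle=\frac{1}{\sqrt{NL}}\sum_k w_k^\top J\nabla h(x_k)=\Bigl\langle h,\frac{1}{\sqrt{NL}}\sum_k w_k^\top J\nabla_1K(x_k,\cdot)\Bigr\rangle_{\HK},
\]
so that $(A_{NL}^{\Int})^*w=\frac{1}{\sqrt{NL}}\sum_k w_k^\top J\nabla_1K(x_k,\cdot)$.

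\emph{Step 2: the finite-dimensional Gram operator.} Apply $A_{NL}^{\Int}$ to the element obtained in Step 1. Differentiating $w_{k'}^\top J\nabla_1K(x_{k'},y)$ with respect to $y$ at $y=x_k$ gives the vector $\nabla_{1,2}K(x_{k'},x_k)^\top J^\top w_{k'}$. Multiplying by $J$ on the left, block $(k,k')$ of $A_{NL}^{\Int}(A_{NL}^{\Int})^*$ equals
\[
\frac{1}{NL}\,J\,\nabla_{1,2}K(x_{k'},x_k)^\top J^\top=\frac{1}{NL}\,J\,\nabla_{1,2}K(x_k,x_{k'})\,J^\top .
\]
The last equality uses the symmetry $K(x,y)=K(y,x)$, which gives $\nabla_{1,2}K(x,y)^\top=\nabla_{1,2}K(y,x)$. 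Consequently
\[
A_{NL}^{\Int}(A_{NL}^{\Int})^*=\frac{1}{NL}\,\mathbb J_{NL}\mathbf G_{NL}^\Int\mathbb J_{NL}^\top .
\]
This matrix is positive semidefinite, so the shifted matrix in the push-through identity is invertible for every $\eta>0$.

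\emph{Step 3: the push-through identity and assembly.} The identity follows from the intertwining relation $(A^*A+\eta I)A^*=A^*(AA^*+\eta I)$, after multiplying on the left and on the right by the two inverses; both inverses exist by Proposition~\ref{prop:operator-representations} and Step 2. Substituting into the formula for $\widehat h_{\lambda,N,L}^{\Int}$ gives
\[
\widehat h_{\lambda,N,L}^{\Int}=\frac{1}{\sqrt{NL}}(A_{NL}^{\Int})^*\Bigl(\frac{1}{NL}\mathbb J_{NL}\mathbf G_{NL}^\Int\mathbb J_{NL}^\top+\eta I\Bigr)^{-1}v_{NL}.
\]
Set $\mathbf c_{NL}:=\frac{1}{NL}\bigl(\frac{1}{NL}\mathbb J_{NL}\mathbf G_{NL}^\Int\mathbb J_{NL}^\top+\eta I\bigr)^{-1}v_{NL}$, which simplifies to $\bigl(\mathbb J_{NL}\mathbf G_{NL}^\Int\mathbb J_{NL}^\top+\eta NLI\bigr)^{-1}v_{NL}$. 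The two factors $1/\sqrt{NL}$, one in front and one inside $(A_{NL}^{\Int})^*$, combine to the $1/(NL)$ absorbed into $\mathbf c_{NL}$. Step 1 then yields exactly $\widehat h_{\lambda,N,L}^{\Int}=\sum_{n,l}(c_l^{(n)})^\top J\nabla_1K(x_l^{\Int,(n)},\cdot)$.

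The only delicate point is the bookkeeping in Step 2: the placement of $J$ versus $J^\top$, the transpose of the mixed Hessian $\nabla_{1,2}K$, and the normalization factors. Everything else is a direct consequence of the operator representation already established.
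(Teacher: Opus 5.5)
Your proof is correct, but it takes a different route from the paper. The paper's proof only invokes the differential representer theorem of \cite{hu2024hamiltonian} at the $NL$ anchors with $\eta=\lambda/\delta^2$. That theorem is variational. One splits $\HK$ into the span of the sections $J\nabla_1K(x_l^{\Int,(n)},\cdot)$ and its orthogonal complement. The complement is invisible to the data term and only increases the penalty, so one then minimizes a finite-dimensional quadratic in the coefficients.

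You instead start from the closed form in Proposition~\ref{prop:operator-representations}. You compute $(A_{NL}^{\Int})^*$ and $A_{NL}^{\Int}(A_{NL}^{\Int})^*=\frac{1}{NL}\mathbb J_{NL}\mathbf G_{NL}^\Int\mathbb J_{NL}^\top$ from the differential reproducing property, and you move the inverse onto $\RR^{2dNL}$ with the push-through identity. You handle the placement of $J$ versus $J^\top$, the rule $\nabla_{1,2}K(x,y)^\top=\nabla_{1,2}K(y,x)$, and the $1/(NL)$ normalization correctly.

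Your route has three advantages:
\begin{itemize}
\item it is self-contained within the paper;
\item it shows explicitly that the kernel formula and the operator formula used in the error analysis describe the same estimator;
\item it delivers $\mathbf c_{NL}$ directly.
\end{itemize}
By contrast, in the variational route the finite-dimensional normal equations read $M(M\mathbf c+\eta NL\,\mathbf c-v_{NL})=0$ with $M=\mathbb J_{NL}\mathbf G_{NL}^\Int\mathbb J_{NL}^\top$. So when $M$ is singular (for instance, with repeated anchors), one must add that the stated vector is one admissible solution.

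The representer-theorem route is more general in one respect. It applies to any loss that depends on $h$ only through $J\nabla h$ at the anchors, whereas your argument relies on the explicit quadratic normal equation.
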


\begin{proof}
This follows directly from the differential representer theorem in \cite{hu2024hamiltonian}, applied to the $NL$ scheme-adapted anchors with effective regularization parameter $\eta=\lambda/\delta^2$.
\end{proof}

\subsection{Identifiability and trajectory coverage}
\label{sec:identifiability}

The regularization parameter $\lambda>0$ guarantees uniqueness of the regularized estimators in \eqref{eq:emp-fun} and \eqref{eq:exp-fun}, but this should be distinguished from identifiability of the Hamiltonian from the trajectory observations. 
For each $\Int\in\{\MP,\SE\}$, define the {\bfi unobservable subspace}
\begin{equation*}
\mathcal H_{\mathrm{null}}^\Int:=\ker A^\Int=\ker B^\Int=\left\{h\in\HK\mid \nabla h=0\quad\text{on }\operatorname{supp}\rho^\Int\right\}.
\end{equation*}
Indeed, the identity $\langle B^\Int h,h\rangle_{\HK}=\|A^\Int h\|_{L^2(\rho^\Int;\mathbb R^{2d})}^2$ yields $\ker B^\Int=\ker A^\Int$.  Moreover, by Assumption~\ref{ass:domain}, $\HK\hookrightarrow C^1(\overline\Omega)$, so that $\nabla h=0$ in $L^2(\rho^\Int;\mathbb{R}^{2d})$ is equivalent to $J\nabla h=0$ on $\operatorname{supp}\rho^\Int$ since $J$ is nondegenerate.
Thus two Hamiltonians whose difference belongs to $\mathcal H_{\mathrm{null}}^\Int$ are indistinguishable from the population trajectory observations.

The Tikhonov term does not make these unobservable directions identifiable; rather, it selects the representative with zero component along them. Indeed, for any $g\in\mathcal H_{\mathrm{null}}^\Int$, the scheme-adapted anchors lie in $\operatorname{supp}\rho^\Int$, and hence $A_{NL}^\Int g=0.$
Taking the $\HK$-inner product of the estimator \eqref{eq:empirical-operator-representation} with $g$ therefore gives $\langle \widehat h_{\lambda,N,L}^\Int,g\rangle_{\HK}=\frac{1}{\eta\sqrt{NL}}\langle v_{NL},A_{NL}^\Int g\rangle_{\mathbb R^{2dNL}}=0.$
Consequently, $\widehat h_{\lambda,N,L}^\Int\in\bigl(\mathcal H_{\mathrm{null}}^\Int\bigr)^\perp.$

Unlike standard regression, the population input law is generated by the dynamics:
\[
(\mu_0,\varphi_t^H,\delta,L,\Int)\longmapsto\rho^\Int\longmapsto B^\Int.
\]
This makes trajectory coverage an intrinsic part of the learning problem. In particular, Hamiltonian trajectories satisfy $H(\varphi_t^H(z))=H(z)$, and may be further constrained by additional first integrals.
Consequently, the initial distribution determines which invariant regions the data represent, while the subsequent dynamics determine how these regions are explored. 
Accordingly, $N$ and $L$ play distinct roles: increasing $N$ improves the statistical sampling of $\mu_0$ without changing the population operator $B^\Int$, whereas increasing $L$ extends the dynamical observation from each initial state.

This distinction suggests separating the information already contained in the initial distribution from that generated by the subsequent trajectory evolution. To this end, define the initial-design operator $B_0:\HK\to\HK$ by
\begin{equation}\label{eq:initial-information-operator}
\langle B_0h,g\rangle_{\HK}:=\int\langle J\nabla h(z),J\nabla g(z)\rangle_{\mathbb R^{2d}}\,\mathrm d\mu_0(z).
\end{equation}
Thus $B_0$ describes the population information available from the initial states alone, whereas $B^\Int$ incorporates the scheme-adapted observations collected along the trajectories. In particular,
\begin{equation}\label{eq:trajectory-information-form}
\langle B^\Int h,g\rangle_{\HK}=\frac1L\sum_{l=1}^L\int\left\langle J\nabla h(X_l^\Int(z)), J\nabla g(X_l^\Int(z))\right\rangle_{\mathbb R^{2d}}\,\mathrm d\mu_0(z).
\end{equation}
The difference $B^\Int-B_0$ therefore measures how much the population information operator changes under dynamical exploration of the initial distribution. This comparison also reveals a trade-off in choosing the sampling interval $\delta$. The inverse-modified target satisfies
\[
\|\check H_\delta^\Int-H\|_{\HK}=\mathcal O(\delta^{r_\Int}),\qquad r_{\MP}=2,\qquad r_{\SE}=1,
\]
whereas, for fixed $L$, the physical observation horizon is $T=L\delta$. Decreasing $\delta$ therefore reduces the integrator-induced error, but also leaves less time for the dynamics to modify the information contained in the initial distribution. The following result quantifies this short-horizon effect.

\begin{proposition}[Short-horizon trajectory information]
\label{prop:short-horizon-information}
Suppose that Assumption~\ref{ass:domain} holds with $s>d+2$, and that the scheme-adapted anchors remain in $\Omega$. Then, for fixed $L$ and sufficiently small $\delta$,
\begin{equation*}
\|B^\Int-B_0\|_{\mathrm{op}} \le CL\delta,
\end{equation*}
where $C$ is independent of $\delta$. 
\end{proposition}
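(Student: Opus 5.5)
The plan is to compare the two quadratic forms directly and bound the operator norm through the bilinear form. Since $B^\Int-B_0$ is self-adjoint and bounded,
\[
\|B^\Int-B_0\|_{\mathrm{op}}=\sup_{\|h\|_{\HK}=\|g\|_{\HK}=1}\bigl|\langle (B^\Int-B_0)h,g\rangle_{\HK}\bigr|.
\]
We therefore write, using \eqref{eq:initial-information-operator} and \eqref{eq:trajectory-information-form},
\[
\langle (B^\Int-B_0)h,g\rangle_{\HK}=\frac1L\sum_{l=1}^L\int\Bigl(\bigl\langle J\nabla h(X_l^\Int(z)),J\nabla g(X_l^\Int(z))\bigr\rangle-\bigl\langle J\nabla h(z),J\nabla g(z)\bigr\rangle\Bigr)\,\mathrm d\mu_0(z).
\]
Each integrand is then split by the elementary identity $\langle a,b\rangle-\langle a',b'\rangle=\langle a-a',b\rangle+\langle a',b-b'\rangle$.

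Remark~\ref{rem:derivative-bounds} (available since $s>d+2$) gives $\|J\nabla h(x)\|\le\kappa_1\|h\|_{\HK}$. It also gives the Lipschitz bound $\|J\nabla h(x)-J\nabla h(y)\|\le\kappa_2|x-y|\,\|h\|_{\HK}$, and likewise for $g$. Together these bound the integrand by $2\kappa_1\kappa_2\,|X_l^\Int(z)-z|$ for unit-norm $h,g$. The remaining task is the displacement estimate
\[
|X_l^\Int(z)-z|\le C' l\delta,
\]
uniformly for $z\in\operatorname{supp}\mu_0$. For $\MP$ we have $X_l^\MP(z)-z=\tfrac12(\varphi_{(l-1)\delta}^H(z)-z)+\tfrac12(\varphi_{l\delta}^H(z)-z)$. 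For $\SE$ the anchor has components taken from $\varphi_{(l-1)\delta}^H(z)$ and $\varphi_{l\delta}^H(z)$. In both cases $|X_l^\Int(z)-z|\le\sup_{t\le l\delta}|\varphi_t^H(z)-z|\le l\delta\,\sup_{\mathcal K}|J\nabla H|$. This uses the standing assumption that trajectories remain in the compact set on which $H$ is analytic, so $f=J\nabla H$ is bounded there.

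Averaging over $l$ gives $\frac1L\sum_{l=1}^L C'l\delta=C'\frac{L+1}{2}\delta\le C'L\delta$. We therefore obtain $\|B^\Int-B_0\|_{\mathrm{op}}\le 2\kappa_1\kappa_2C'L\delta$, with a constant independent of $\delta$.

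The main obstacle is the geometric condition in Remark~\ref{rem:derivative-bounds} that the segment joining $z$ and $X_l^\Int(z)$ lies in $\Omega$. The hypothesis only places the anchors in $\Omega$, and $\mu_0$-initial points need to lie there as well for $B_0$ to make sense within $\HK(\Omega)$. I would handle this in two steps. First, use that $\operatorname{supp}\mu_0$ and the anchors lie in a compact subset of $\Omega$ at positive distance $\epsilon$ from $\partial\Omega$. Then, for $\delta$ small enough that $C'L\delta<\epsilon$ ("fixed $L$, sufficiently small $\delta$"), the whole segment lies in the $\epsilon$-ball around $z$, hence in $\Omega$. Alternatively, one could use a bounded extension operator $H^s(\Omega)\to H^s(\mathbb R^{2d})$ to get a global Lipschitz bound on gradients, which removes the convexity issue altogether.
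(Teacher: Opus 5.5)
Your proposal is correct and follows the paper's proof essentially line by line. Both compare the two bilinear forms, bound the integrand by $2\kappa_1\kappa_2|X_l^\Int(z)-z|$ via Remark~\ref{rem:derivative-bounds}, use a displacement bound $|X_l^\Int(z)-z|\le C'l\delta$, and average over $l$.

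You also supply details the paper leaves implicit: a proof of the displacement bound, the segment-in-$\Omega$ condition needed for the Lipschitz estimate, and $\operatorname{supp}\mu_0\subset\Omega$. One small slip concerns $\SE$. There the anchor combines the $q$-component at time $(l-1)\delta$ with the $p$-component at time $l\delta$, so $|X_l^{\SE}(z)-z|\le\sup_{t\le l\delta}|\varphi_t^H(z)-z|$ need not hold. However, $|X_l^{\SE}(z)-z|\le l\delta\,\sup|J\nabla H|$ still follows directly, for instance by Minkowski's integral inequality applied to the two time integrals, so only the constant is affected.
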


\begin{proof}
By the definitions of $B^\Int$ in \eqref{eq:trajectory-information-form} and $B_0$ in \eqref{eq:initial-information-operator}, for any $h,g\in\HK$,
\begin{align*}
\langle (B^\Int-B_0)h,g\rangle_{\HK}=\frac1L\sum_{l=1}^L\int\Big[\langle J\nabla h(X_l^\Int(z)), J\nabla g(X_l^\Int(z))\rangle-\langle J\nabla h(z), J\nabla g(z)\rangle\Big]\,\mathrm d\mu_0(z).
\end{align*}
The Sobolev embedding bound \eqref{eq:gradient-perturbation-bounds} in  Remark~\ref{rem:derivative-bounds} implies that
\begin{align*}
\left|
\langle J\nabla h(x),J\nabla g(x)\rangle-\langle J\nabla h(y),J\nabla g(y)\rangle\right|\le 2\kappa_1\kappa_2|x-y|\,\|h\|_{\HK}\|g\|_{\HK}.
\end{align*}
Moreover, for fixed $L$ and sufficiently small $\delta$, the scheme-adapted anchors satisfy
\begin{equation*}
\sup_{z\in\operatorname{supp}\mu_0}|X_l^\Int(z)-z| \le C_X l\delta,\qquad 1\le l\le L. \end{equation*}
Therefore,
\[
\left|\langle (B^\Int-B_0)h,g\rangle_{\HK}\right|\le \frac{2\kappa_1\kappa_2C_X}{L}\sum_{l=1}^Ll\delta\,\|h\|_{\HK}\|g\|_{\HK}.
\]
The result follows by taking the supremum over $\|h\|_{\HK},\|g\|_{\HK}\le1$ and setting $C=2\kappa_1\kappa_2C_X$.
\end{proof}

For fixed $L$, Proposition~\ref{prop:short-horizon-information} shows that, over a short physical observation horizon, the trajectory-generated population operator remains close to that induced by the initial distribution:
\[
B^\Int = B_0+\mathcal O(T), \qquad T=L\delta\to0.
\]
Thus dense temporal sampling over a vanishing time interval does not substantially alter the population information contained in the initial distribution. In particular, directions that are weakly represented by $B_0$ cannot be substantially strengthened merely by decreasing $\delta$ while keeping $L$ fixed. This reveals two competing scales in the choice of the sampling interval: a smaller $\delta$ reduces the deterministic target bias at order $\delta^{r_\Int}$, whereas the departure of the trajectory information from the initial design is only $\mathcal O(L\delta)$. Hence, a smaller $\delta$ improves target accuracy but, for fixed $L$, reduces the time available for dynamical exploration.

This trade-off also motivates the Richardson extrapolation developed later on in Section~\ref{sec:richardson}. By reducing the integrator-induced error at a fixed base step, Richardson extrapolation improves target accuracy without necessarily shortening the physical time scale available for dynamical exploration.

\begin{remark}[Full support]
If $\operatorname{supp}\rho^\Int=\overline\Omega$ and $\Omega$ is connected, then the continuity of $\nabla h$ implies $\mathcal H_{\mathrm{null}}^\Int=\operatorname{span}\{1\},$ provided that constants belong to $\HK$. Thus full support is sufficient for identifiability up to an additive constant. This condition is only qualitative, however; the strength of the representation of the observable directions is determined by the positive spectrum of $B^\Int$.
\end{remark}

\section{Error bounds and stability}\label{sec:main-results}

We now analyze the reconstruction error committed by the estimator introduced in Section~\ref{sec:rkhs-formulation}. We first consider the case using the exact trajectory observations \eqref{eq:data-generation}. In this setting, randomness comes only from the sampled initial conditions $z_0^{(n)}\sim\mu_0$, and thus from fluctuations in the empirical trajectory-anchor operator around its population counterpart $B^\Int$. We consider the effect of additive observation noise separately in Section~\ref{sec:deterministic-perturbations}.

At a fixed time step $\delta$, the scheme-adapted data provide exact derivative observations of the inverse modified Hamiltonian $\HInt_\delta$, rather than of the physical Hamiltonian $H$. Moreover, as discussed in Section~\ref{sec:identifiability}, Hamiltonians can only be recovered on the {\bfi identifiable subspace} $(\mathcal H_{\mathrm{null}}^\Int)^\perp$; the source condition introduced below ensures that the fixed-step target belongs to this subspace.

Accordingly, the reconstruction error naturally separates into three contributions: a finite-sample estimation error, an approximation error, and an integrator-induced error.  More precisely, with $h_{\lambda,L}^{*,\Int}$ denoting the population regularized minimizer,
\begin{equation}\label{eq:error-decomposition}
\begin{aligned}\widehat h_{\lambda,N,L}^{\Int}-H\quad= \quad \underbrace{\widehat h_{\lambda,N,L}^{\Int}-h_{\lambda,L}^{*,\Int} }_{\mathrm{estimation\ error}}\quad + \underbrace{h_{\lambda,L}^{*,\Int}-\HInt_\delta}_{\mathrm{approximation\ error}}+\underbrace{\HInt_\delta-H}_{\mathrm{integrator-induced\ error}}.
\end{aligned}
\end{equation}
The first two terms concern the reconstruction of the fixed-step target and are controlled below by the RKHS operator framework of Section~\ref{sec:estimator-representations}. The third term is specific to the chosen symplectic integrator and, by the backward error analysis leading to
\eqref{eq:generic-native-bias}, satisfies
\[
\|\HInt_\delta-H\|_{\HK}\le C_\Int\delta^{r_\Int}, \qquad r_{\MP}=2,\qquad r_{\SE}=1.
\]
After establishing the corresponding reconstruction and consistency bounds, we turn to noisy trajectory observations in Section~\ref{sec:deterministic-perturbations} and study the effect of observation noise pathwise.

\subsection{Estimation and approximation errors}
\label{sec:estimation-regularization}

We first control the estimation and approximation errors in \eqref{eq:error-decomposition} associated with the reconstruction of the fixed-step target $\HInt_\delta$. The $L$ observations along a trajectory are generally dependent, whereas the $N$ trajectories are independent.
We therefore treat each trajectory as one observation block and apply Hilbert-space concentration across the $N$ blocks.

\paragraph{Trajectory-block concentration.}
For a fixed $h\in\HK$, associate with the $n$th trajectory the $\HK$-valued random variable
\[
\xi_{\Int}^{(n)}(h):= \frac1L\sum_{l=1}^L\sum_{i=1}^{2d}\partial_i h(x_l^{\Int,(n)})(\nabla_1K)_i(x_l^{\Int,(n)},\cdot),\quad n=1,\ldots, N.
\]
Although the $L$ terms entering each $\xi_{\Int}^{(n)}(h)$ are generally dependent, the random variables $\{\xi_{\Int}^{(n)}(h)\}_{n=1}^N$ are independent and identically distributed in $\HK$.
Moreover, $B_{NL}^{\Int}h=\frac1N\sum_{n=1}^N\xi_{\Int}^{(n)}(h), $ and $ \mathbb E[\xi_{\Int}^{(n)}(h)] = B^\Int h,$ where the second identity follows from the definition of $\rho^\Int$ in \eqref{eq:rho-generic}. 
Consequently,
\[
B_{NL}^{\Int}h-B^\Int h=\frac1N\sum_{n=1}^N\left(\xi_{\Int}^{(n)}(h)-\mathbb E[\xi_{\Int}^{(n)}(h)]\right).
\]
The following result applies Hilbert-space concentration across the independent trajectory blocks, without requiring independence among the anchors within each trajectory.

\begin{lemma}[Trajectory-block concentration]
\label{lem:trajectory-concentration}
For any $h\in\HK$ and $0<\varepsilon<1$, with probability at least $1-\varepsilon$,
\begin{equation}\label{eq:trajectory-concentration}
\|(B_{NL}^{\Int}-B^\Int)h\|_{\HK} \le 2d C\kappa^2 \sqrt{\frac{\log(2/\varepsilon)}{N}} \|h\|_{\HK},
\end{equation}
where $C>0$ is a universal constant.
\end{lemma}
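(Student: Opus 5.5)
The idea is to apply a Hoeffding-type (Pinelis) concentration inequality for bounded i.i.d. random variables in the separable Hilbert space $\HK$ to the centered block variables $\xi_{\Int}^{(n)}(h)-B^\Int h$, $n=1,\dots,N$. Independence is available only across trajectories, not within them. So the whole argument runs at the level of the $N$ trajectory blocks, and the within-trajectory average $\frac1L\sum_{l}$ is treated as a single deterministic function of $z_0^{(n)}$. Concretely, $\xi_{\Int}^{(n)}(h)=F_h(z_0^{(n)})$ with
\[
F_h(z_0):=\frac1L\sum_{l=1}^L\sum_{i=1}^{2d}\partial_i h\bigl(X_l^\Int(z_0)\bigr)(\nabla_1K)_i\bigl(X_l^\Int(z_0),\cdot\bigr),
\]
where $X_l^\Int$ is the anchor map \eqref{eq:population-anchor-map}. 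Since the $z_0^{(n)}$ are i.i.d.\ $\sim\mu_0$, the $\xi_{\Int}^{(n)}(h)$ are i.i.d. Their common mean is $B^\Int h$ by the definition \eqref{eq:rho-generic} of $\rho^\Int$ together with \eqref{population covariance operator}. Measurability and Bochner integrability follow from the continuity of $F_h$, which holds because $K\in C^2$ and the anchors lie in $\Omega$.

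The key step is a uniform almost-sure bound on $\|\xi_{\Int}^{(n)}(h)\|_{\HK}$. Here we take $\kappa:=\sup_{x\in\overline\Omega}\max_i\|(\nabla_1K)_i(x,\cdot)\|_{\HK}=\sup_x\max_i\bigl(\partial_{1,i}\partial_{2,i}K(x,x)\bigr)^{1/2}$, which is finite since $K\in C^2(\overline\Omega\times\overline\Omega)$. The derivative reproducing property gives $\partial_i h(x)=\langle h,(\nabla_1K)_i(x,\cdot)\rangle_{\HK}$, so $|\partial_i h(x)|\le\kappa\|h\|_{\HK}$. By the triangle inequality over $i$ and $l$,
\[
\|\xi_{\Int}^{(n)}(h)\|_{\HK}\le\frac1L\sum_{l=1}^L\sum_{i=1}^{2d}\kappa\|h\|_{\HK}\,\kappa= 2d\,\kappa^2\|h\|_{\HK}=:M.
\]
Hence the centered variables satisfy $\|\xi_{\Int}^{(n)}(h)-B^\Int h\|_{\HK}\le 2M$, since $\|B^\Int h\|\le M$ by Jensen. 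The average over $l$ costs nothing, and this is exactly why dependence within a trajectory is harmless.

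It remains to invoke the bounded-difference Hilbert-space inequality, for instance Pinelis' inequality, or Lemma~2 of Smale--Zhou in the form $\|\frac1N\sum(\zeta_n-\mathbb E\zeta)\|\le \frac{2M\log(2/\varepsilon)}{N}+\sqrt{\frac{2\sigma^2\log(2/\varepsilon)}{N}}$ with $\sigma^2\le M^2$. This yields, with probability at least $1-\varepsilon$, a bound of $C M\sqrt{\log(2/\varepsilon)/N}$. The linear term must be absorbed into the square-root term. When $\log(2/\varepsilon)\le N$ this is immediate. Otherwise the bound is trivial, because the left-hand side is always at most $2M\le 2M\sqrt{\log(2/\varepsilon)/N}$. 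Either way the result is \eqref{eq:trajectory-concentration} with a universal $C$.

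I expect the main obstacle to be bookkeeping rather than anything deep: pinning down the definition of $\kappa$ so that the constant comes out as exactly $2d\kappa^2$ rather than something like $\sqrt{2d}\,\kappa^2$ (which would be sharper but differs in form), and checking that the chosen concentration inequality is stated for i.i.d.\ blocks with only an a.s.\ norm bound.
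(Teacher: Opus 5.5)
Your proposal is correct and matches the paper's proof essentially step for step: treat each trajectory as one independent block, bound $\|\xi_{\Int}^{(n)}(h)\|_{\HK}\le 2d\kappa^2\|h\|_{\HK}$ via the differential reproducing property, center the block variables, and apply Pinelis' inequality for Hilbert (i.e.\ $(2,1)$-smooth) spaces. Your alternative through the Smale--Zhou Bernstein-type bound, with the case split on $\log(2/\varepsilon)\le N$ to absorb the linear term, is a harmless variant that yields the same form with a different universal constant.
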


\begin{proof}
By the differential reproducing property \cite[Theorem 2.7]{hu2024hamiltonian}, $|\partial_i h(x)| \le \kappa\|h\|_{\HK},$ where  $\kappa^2=\|K\|_{C_b^{2}(\Omega \times \Omega)}$. Hence
\[
\|\xi_{\Int}^{(n)}(h)\|_{\HK}\le \frac1L\sum_{l=1}^L\sum_{i=1}^{2d}|\partial_i h(x_l^{\Int,(n)})|\|(\nabla_1K)_i(x_l^{\Int,(n)},\cdot)\|_{\HK}\le 2d\kappa^2\|h\|_{\HK}.
\]
Set $Y_n(h):= \xi_{\Int}^{(n)}(h)-\mathbb E[\xi_{\Int}^{(n)}(h)].$
Then $\{Y_n(h)\}_{n=1}^N$ are independent, centered $\HK$-valued random variables satisfying $\|Y_n(h)\|_{\HK}\le 4d\kappa^2\|h\|_{\HK}.$
Since every Hilbert space is $(2,1)$-smooth, Theorem~3.5 of \cite{pinelis1994}, applied to the martingale $N^{-1}\sum_{n=1}^jY_n(h)$, $j=1,\ldots,N$, implies \eqref{eq:trajectory-concentration}.
\end{proof}

\paragraph{Estimation error.}

We next use Lemma~\ref{lem:trajectory-concentration} together with a resolvent argument to bound the estimation error.

\begin{proposition}[Estimation error]
\label{thm:estimation}
Let $\eta=\lambda/\delta^2$. For $0<\varepsilon<1$, with probability at least $1-\varepsilon$,
\begin{equation}\label{eq:estimation-bound}
\|\widehat h_{\lambda,N,L}^{\Int}-h_{\lambda,L}^{*,\Int}\|_{\HK}\le C\,\frac{2d\kappa^2}{\eta} \sqrt{\frac{\log(4/\varepsilon)}{N}}\,\|\HInt_\delta\|_{\HK},
\end{equation}
where $C>0$ is a universal constant.
\end{proposition}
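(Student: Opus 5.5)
The plan is to exploit the fact that the clean data are exact derivative observations of the fixed-step target. By \eqref{eq:exact-scheme-adapted-regression}, the observation vector satisfies $v_{NL}=\sqrt{NL}\,A_{NL}^{\Int}\HInt_\delta$. Substituting this into Proposition~\ref{prop:operator-representations} gives the two parallel representations
\[
\widehat h_{\lambda,N,L}^{\Int}=(B_{NL}^{\Int}+\eta I)^{-1}B_{NL}^{\Int}\HInt_\delta,
\qquad
h_{\lambda,L}^{*,\Int}=(B^{\Int}+\eta I)^{-1}B^{\Int}\HInt_\delta .
\]
Both estimators are therefore the same spectral filter, applied with the empirical and the population operator respectively, to the same element $\HInt_\delta\in\HK$. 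In particular, no noise term needs to be controlled.

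Next I will use the identity $(B+\eta I)^{-1}B=I-\eta(B+\eta I)^{-1}$. Applying the second resolvent identity then yields
\[
\widehat h_{\lambda,N,L}^{\Int}-h_{\lambda,L}^{*,\Int}
=\eta\bigl[(B^{\Int}+\eta I)^{-1}-(B_{NL}^{\Int}+\eta I)^{-1}\bigr]\HInt_\delta
=(B_{NL}^{\Int}+\eta I)^{-1}(B_{NL}^{\Int}-B^{\Int})\,\eta(B^{\Int}+\eta I)^{-1}\HInt_\delta .
\]
Both operators are positive semidefinite and self-adjoint, so $\|(B_{NL}^{\Int}+\eta I)^{-1}\|_{\mathrm{op}}\le\eta^{-1}$ and $\|\eta(B^\Int+\eta I)^{-1}\|_{\mathrm{op}}\le 1$. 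Set $g:=\eta(B^\Int+\eta I)^{-1}\HInt_\delta$. This $g$ is deterministic and satisfies $\|g\|_{\HK}\le\|\HInt_\delta\|_{\HK}$, so
\[
\|\widehat h_{\lambda,N,L}^{\Int}-h_{\lambda,L}^{*,\Int}\|_{\HK}\le \eta^{-1}\,\|(B_{NL}^{\Int}-B^\Int)g\|_{\HK}.
\]

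The one delicate step is that concentration must be applied to a fixed vector, independent of the data. I will arrange the factorization so that the random resolvent sits on the left and is controlled only through its deterministic operator norm. The difference $B_{NL}^\Int-B^\Int$ then acts on the deterministic $g$, not on a data-dependent element.

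Lemma~\ref{lem:trajectory-concentration}, applied with $h=g$, then gives with probability at least $1-\varepsilon$
\[
\|(B_{NL}^{\Int}-B^\Int)g\|_{\HK}\le 2dC\kappa^2\sqrt{\log(2/\varepsilon)/N}\,\|\HInt_\delta\|_{\HK}.
\]
This bound is already stronger than the claimed one with $\log(4/\varepsilon)$, since $\log(2/\varepsilon)\le\log(4/\varepsilon)$. The $\log(4/\varepsilon)$ form would instead arise if one split $\varepsilon/2$ between two concentration events. Combining the three displays proves \eqref{eq:estimation-bound}, with the universal constant inherited from the lemma.
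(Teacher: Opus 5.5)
Your proof is correct and is essentially the paper's argument. Both use the exact-target representation $\widehat h_{\lambda,N,L}^{\Int}=(B_{NL}^{\Int}+\eta I)^{-1}B_{NL}^{\Int}\HInt_\delta$, the resolvent identity, the deterministic bounds $\|(B_{NL}^{\Int}+\eta I)^{-1}\|_{\mathrm{op}}\le\eta^{-1}$, and Lemma~\ref{lem:trajectory-concentration} applied to fixed vectors.

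The only difference is in how the resolvent terms are grouped. Write $R_{NL}:=(B_{NL}^{\Int}+\eta I)^{-1}$ and $R:=(B^{\Int}+\eta I)^{-1}$. The paper keeps two terms, $R_{NL}(B_{NL}^{\Int}-B^{\Int})\HInt_\delta$ and $R_{NL}(B^{\Int}-B_{NL}^{\Int})RB^{\Int}\HInt_\delta$, and applies the lemma to $\HInt_\delta$ and to $RB^{\Int}\HInt_\delta$ with failure probability $\varepsilon/2$ each. That union bound is where $\log(4/\varepsilon)$ and a factor $2$ absorbed into $C$ come from. You merge the two terms via $I-RB^{\Int}=\eta R$, so a single concentration event suffices and you obtain the slightly sharper $\log(2/\varepsilon)$.
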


\begin{proof}
By \eqref{eq:exact-scheme-adapted-regression} and Proposition~\ref{prop:operator-representations}, we have $\widehat h_{\lambda,N,L}^{\Int}=(B_{NL}^{\Int}+\eta I)^{-1}B_{NL}^{\Int}\HInt_\delta, $ and $ h_{\lambda,L}^{*,\Int} =(B^\Int+\eta I)^{-1}B^\Int\HInt_\delta.$
Set $R_{NL}:=(B_{NL}^{\Int}+\eta I)^{-1}$ and $R:=(B^\Int+\eta I)^{-1}.$ The resolvent identity gives
\[
\widehat h_{\lambda,N,L}^{\Int}-h_{\lambda,L}^{*,\Int}=R_{NL}(B_{NL}^{\Int}-B^\Int)\HInt_\delta+R_{NL}(B^\Int-B_{NL}^{\Int})RB^\Int\HInt_\delta.
\]
Since $B_{NL}^{\Int}$ and $B^\Int$ are positive semidefinite, $\|R_{NL}\|_{\mathrm{op}}\le\eta^{-1}, $ and $ \|RB^\Int\|_{\mathrm{op}}\le1.$
Therefore,
\begin{align*}
\|\widehat h_{\lambda,N,L}^{\Int}-h_{\lambda,L}^{*,\Int}\|_{\HK} \le \eta^{-1}\Big(\|(B_{NL}^{\Int}-B^\Int)\HInt_\delta\|_{\HK}+ \|(B_{NL}^{\Int}-B^\Int)RB^\Int\HInt_\delta\|_{\HK}\Big).
\end{align*}
Applying Lemma~\ref{lem:trajectory-concentration} to $\HInt_\delta$ and $RB^\Int\HInt_\delta$, each with failure probability $\varepsilon/2$, and using a union bound yields
\eqref{eq:estimation-bound}.
\end{proof}

\paragraph{Approximation error.}
It remains to quantify the approximation error induced by the regularized population problem. We impose the standard source condition on the fixed-step target; see, e.g., \cite{engl1996regularization}.

\begin{assumption}[Source condition]
\label{ass:source}
For the fixed step size $\delta$, there exist $\gamma\in(0,1]$ and $\psi_\delta^\Int\in\HK$ such that
\begin{equation}\label{eq:source-condition}
\HInt_\delta=(B^\Int)^\gamma\psi_\delta^\Int.
\end{equation}
\end{assumption}

Assumption~\ref{ass:source} implies that the fixed-step target $\HInt_\delta$ belongs to the identifiable subspace. 
Moreover, $\overline{\operatorname{Ran}(B^\Int)^\gamma}= (\mathcal H_{\mathrm{null}}^\Int)^\perp,$ so the source spaces are dense in the identifiable subspace; see \cite{hu2026kernel}. Hence the source condition is primarily a quantitative regularity assumption.

By Proposition~\ref{prop:operator-representations}, we have $h_{\lambda,L}^{*,\Int}-\HInt_\delta = -\eta(B^\Int+\eta I)^{-1}\HInt_\delta.$
Using \eqref{eq:source-condition} and the spectral calculus for the
positive self-adjoint operator $B^\Int$,
\[
\left\|
\eta(B^\Int+\eta I)^{-1}(B^\Int)^\gamma \right\|_{\mathrm{op}} \le \sup_{\sigma\ge0}\frac{\eta\sigma^\gamma}{\sigma+\eta} \le \eta^\gamma,
\qquad 0<\gamma\le1.
\]
Consequently,
\begin{equation}\label{eq:regularization-bound}
\|h_{\lambda,L}^{*,\Int}-\HInt_\delta\|_{\HK} \le \eta^\gamma\|\psi_\delta^\Int\|_{\HK} =\left(\frac{\lambda}{\delta^2}\right)^\gamma \|\psi_\delta^\Int\|_{\HK}.
\end{equation}
Thus, for fixed $\delta$, the approximation error vanishes as $\lambda\to0$ (equivalently, $\eta=\lambda/\delta^2\to0$) at the rate $\eta^\gamma$, with a constant determined by the source norm $\|\psi_\delta^\Int\|_{\HK}$.

\subsection{Total error and consistency}

Combining the estimation, approximation, and integrator-induced errors yields the main reconstruction bound. Recall that $\eta=\lambda/\delta^2$, $r_{\MP}=2$, and $r_{\SE}=1$. 

\begin{theorem}[Reconstruction error] \label{thm:global-error}
Let $\Int\in\{\MP,\SE\}$ and fix an admissible step size $\delta>0$.
Suppose that Assumptions~\ref{ass:analytic-setting}, \ref{ass:domain} and~\ref{ass:source} hold.
Then, for every $\lambda>0$ and $0<\varepsilon<1$, with probability at least $1-\varepsilon$,
\begin{equation}\label{eq:global-error}
\begin{aligned}
\|\widehat h_{\lambda,N,L}^{\Int}-H\|_{\HK}\le  C\frac{2d\kappa^2}{\eta} \sqrt{\frac{\log(4/\varepsilon)}{N}} \|\HInt_\delta\|_{\HK}+\eta^\gamma\|\psi_\delta^\Int\|_{\HK} + C_\Int\delta^{r_\Int}.
\end{aligned}
\end{equation}
Here $C>0$ is universal, while $C_\Int$ is the constant in the integrator-induced error estimate~\eqref{eq:generic-native-bias}.
\end{theorem}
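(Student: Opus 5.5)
The plan is to combine the three estimates already established, using the error decomposition \eqref{eq:error-decomposition}. By the triangle inequality in $\HK$,
\[
\|\widehat h_{\lambda,N,L}^{\Int}-H\|_{\HK}\le \|\widehat h_{\lambda,N,L}^{\Int}-h_{\lambda,L}^{*,\Int}\|_{\HK}+\|h_{\lambda,L}^{*,\Int}-\HInt_\delta\|_{\HK}+\|\HInt_\delta-H\|_{\HK},
\]
and each term is handled by a result stated earlier.

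First, I will check the preliminaries. Since $\delta$ is admissible, Proposition~\ref{prop:exact-inverse-targets} applies: $|\delta|<\delta_*^\Int$ and $\delta$ is small enough that $\HInt_\delta\in\HK$. This membership follows from the analyticity on a neighborhood of $\overline\Omega$ together with the norm equivalence in Assumption~\ref{ass:domain}. The exact relation \eqref{eq:exact-scheme-adapted-regression} then holds for every observed pair. This gives the representation $\widehat h_{\lambda,N,L}^{\Int}=(B_{NL}^\Int+\eta I)^{-1}B_{NL}^\Int\HInt_\delta$ used in Proposition~\ref{thm:estimation}.

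Next, I bound the three terms. The first term is at most the right-hand side of \eqref{eq:estimation-bound} on an event of probability at least $1-\varepsilon$. That proposition already allocates $\varepsilon/2$ to each of its two concentration applications, which is where the $\log(4/\varepsilon)$ comes from. The second term is deterministic. Assumption~\ref{ass:source} and the spectral bound $\sup_{\sigma\ge0}\eta\sigma^\gamma/(\sigma+\eta)\le\eta^\gamma$ give \eqref{eq:regularization-bound}, so this term is at most $\eta^\gamma\|\psi_\delta^\Int\|_{\HK}$. The third term is deterministic as well. By \eqref{eq:generic-native-bias}, that is, Corollaries~\ref{cor:imh-native-bound} and~\ref{cor:imh-native-bound-se}, it is at most $C_\Int\delta^{r_\Int}$, with $r_{\MP}=2$ and $r_{\SE}=1$.

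Only the first bound is probabilistic, so no further union bound is needed. Adding the three bounds on that single event gives \eqref{eq:global-error}.

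The main point requiring care is consistency of the additive constants. The statement compares $\widehat h_{\lambda,N,L}^{\Int}$ with $H$ itself, while the target $\HInt_\delta$ is normalized by $\HInt_\delta(z_\star)=H(z_\star)$ and the estimator lies in $(\mathcal H_{\mathrm{null}}^\Int)^\perp$. I will apply the source condition to the same representative $\HInt_\delta$ that appears in the bias bound \eqref{eq:generic-native-bias}, so that all three terms refer to one fixed function. Otherwise the decomposition would silently introduce an unaccounted constant. If a conflict between the normalization and the source condition arises, I would state the bound modulo $\mathcal H_{\mathrm{null}}^\Int$, consistent with the paper's phrase ``on the identifiable subspace.'' Beyond this, the argument is routine assembly of results already proved.
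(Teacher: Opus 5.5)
Your proposal is correct and matches the paper's proof, which applies the triangle inequality to the decomposition \eqref{eq:error-decomposition} and then invokes Proposition~\ref{thm:estimation} for the estimation term, \eqref{eq:regularization-bound} for the approximation term, and \eqref{eq:generic-native-bias} for the integrator-induced term. Your remark about additive constants is apt: the paper settles it by hypothesis, assuming the source condition for the same normalized representative $\HInt_\delta$ that appears in the bias bound, exactly as you propose.
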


\begin{proof}
The result follows directly from the error decomposition \eqref{eq:error-decomposition}, Proposition~\ref{thm:estimation}, the regularization estimate~\eqref{eq:regularization-bound}, and the integrator-induced error bound~\eqref{eq:generic-native-bias}.
\end{proof}

\begin{remark}
The three terms in \eqref{eq:global-error} have different origins.
The first is the finite-sample estimation error. The second is the approximation
error associated with the regularized population problem and decreases as
$\eta\to0$. Decreasing $\eta$, however, amplifies finite-sample fluctuations.
The third is the integrator-induced error associated with reconstructing the
fixed-step inverse target $\HInt_\delta$ rather than the physical Hamiltonian $H$.
It cannot be reduced by increasing the number of trajectories or by decreasing
the regularization parameter. Hence, once the estimation and approximation
errors are sufficiently small, a fixed-step error floor of order
$\delta^{r_\Int}$ remains.
\end{remark}

The fixed-step error in Theorem~\ref{thm:global-error} motivates a simultaneous refinement of the sampling size, regularization parameter, and time step. We next balance the three terms in \eqref{eq:global-error} to obtain convergence toward the physical Hamiltonian. To quantify the possible dependence of the source condition on the step size, define the step-dependent source norm
\begin{equation*}
S_\delta^\Int:=\inf\left\{\|\psi\|_{\HK}\mid\psi\in\HK,\quad \HInt_\delta=(B^\Int)^\gamma\psi \right\}.
\end{equation*}
Thus $S_\delta^\Int$ is the minimum source norm associated with the fixed-step target. By Assumption~\ref{ass:source}, $S_\delta^\Int<\infty$ for each fixed admissible $\delta$. For the joint limit below, we assume that these source norms remain uniformly bounded as $\delta\to0$.

\begin{corollary}[Balanced joint convergence] \label{cor:balanced-joint-rate}
Let $\Int\in\{\MP,\SE\}$ and suppose that Assumptions~\ref{ass:analytic-setting} and~\ref{ass:domain} hold.
Assume further that the source condition in
Assumption~\ref{ass:source} holds with a common exponent $\gamma\in(0,1]$ for all $0<\delta\le\delta_0$ and that
\begin{equation}\label{eq:uniform-source-norm}
\sup_{0<\delta\le\delta_0}S_\delta^\Int<\infty.
\end{equation}
Choose $\eta_N\asymp N^{-1/(2\gamma+2)}, $ $ \delta_N\asymp N^{-\gamma/[r_\Int(2\gamma+2)]},$ and set $\lambda_N:=\eta_N\delta_N^2\asymp N^{-(r_\Int+2\gamma)/[2r_\Int(\gamma+1)]}$. Then
\begin{equation*}
\|\widehat h_{\lambda_N,N,L}^{\Int}-H\|_{\HK} = \mathcal O_{\mathbb P} \left(N^{-\gamma/(2\gamma+2)}\right).
\end{equation*}
\end{corollary}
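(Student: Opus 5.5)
We apply Theorem~\ref{thm:global-error} with $\lambda=\lambda_N$, $\delta=\delta_N$ and the corresponding $\eta=\eta_N=\lambda_N/\delta_N^2$, and show that each of the three terms in \eqref{eq:global-error} is $\mathcal O(N^{-\gamma/(2\gamma+2)})$, uniformly in $N$ up to the logarithmic factor in $\varepsilon$. Since $\delta_N\to0$, for all large $N$ we have $\delta_N\le\min\{\delta_0,\delta_*^\Int\}$ and $\delta_N$ lies in the range where \eqref{eq:generic-native-bias} holds with a fixed constant $C_\Int$. Hence the theorem applies with constants independent of $N$. The source representation used for each $\delta_N$ will be a near-minimizer, for instance with $\|\psi_{\delta_N}^\Int\|_{\HK}\le 2S_{\delta_N}^\Int$. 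By \eqref{eq:uniform-source-norm} these norms are bounded by a constant $S$.

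\emph{Step 1 (uniform bound on the target norm).} The estimation term carries the factor $\|\HInt_{\delta_N}\|_{\HK}$, which must be bounded uniformly. There are two routes. The first uses $\|\HInt_\delta\|_{\HK}\le\|H\|_{\HK}+C_\Int\delta^{r_\Int}\le\|H\|_{\HK}+C_\Int\delta_0^{r_\Int}$ from \eqref{eq:generic-native-bias}. The second uses the source condition: $\|\HInt_\delta\|_{\HK}\le\|B^\Int\|_{\mathrm{op}}^\gamma\|\psi_\delta^\Int\|_{\HK}\le(2d\kappa^2)^\gamma\cdot 2S$. Either way we obtain a constant $M$.

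\emph{Step 2 (balancing the terms).} With $\eta_N\asymp N^{-1/(2\gamma+2)}$, the estimation term is
\[
\lesssim \eta_N^{-1}N^{-1/2}\sqrt{\log(4/\varepsilon)}\,M\asymp N^{\frac{1}{2\gamma+2}-\frac12}\sqrt{\log(4/\varepsilon)}=N^{-\gamma/(2\gamma+2)}\sqrt{\log(4/\varepsilon)}.
\]
The approximation term is $\eta_N^\gamma\cdot 2S\asymp N^{-\gamma/(2\gamma+2)}$. The integrator term is $C_\Int\delta_N^{r_\Int}\asymp N^{-\gamma/(2\gamma+2)}$. The formula for $\lambda_N$ is simple exponent bookkeeping: $\lambda_N=\eta_N\delta_N^2$ has exponent $-\tfrac{1}{2\gamma+2}-\tfrac{2\gamma}{r_\Int(2\gamma+2)}=-\tfrac{r_\Int+2\gamma}{2r_\Int(\gamma+1)}$.

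\emph{Step 3 (conversion to $\mathcal O_{\mathbb P}$).} Given $\varepsilon\in(0,1)$, Theorem~\ref{thm:global-error} shows that with probability at least $1-\varepsilon$,
\[
\|\widehat h^\Int_{\lambda_N,N,L}-H\|_{\HK}\le C'\bigl(1+\sqrt{\log(4/\varepsilon)}\bigr)N^{-\gamma/(2\gamma+2)}
\]
for all large $N$, where $C'$ does not depend on $N$ or $\varepsilon$. This is exactly the definition of stochastic boundedness of $N^{\gamma/(2\gamma+2)}\|\widehat h^\Int_{\lambda_N,N,L}-H\|_{\HK}$.

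The main obstacle is the uniformity in Step 1 and in the source constants. The statement assumes uniformity only for $S_\delta^\Int$, so the estimation-term factor has to be controlled through \eqref{eq:generic-native-bias} or through the source bound. A further subtlety is that $B^\Int$ itself depends on $\delta$ through $\rho^\Int$. However, $\|B^\Int\|_{\mathrm{op}}\le 2d\kappa^2$ holds uniformly, so the second route remains valid regardless.
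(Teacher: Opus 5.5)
Your proposal is correct and follows essentially the same route as the paper. You bound $\|\check H_\delta^{\Int}\|_{\HK}$ uniformly via \eqref{eq:generic-native-bias}, control the approximation term by $\eta^\gamma$ times the uniformly bounded source norm \eqref{eq:uniform-source-norm}, and balance the three terms of Theorem~\ref{thm:global-error}. Your alternative bound on the target norm through $\|B^\Int\|_{\mathrm{op}}^\gamma\le(2d\kappa^2)^\gamma$ and your explicit conversion of the high-probability bound into $\mathcal O_{\mathbb P}$ are valid additions that the paper leaves implicit.
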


\begin{proof}
By \eqref{eq:generic-native-bias}, we have $\|\HInt_\delta\|_{\HK} \le \|H\|_{\HK}+C_\Int\delta^{r_\Int},$ and hence $\|\HInt_\delta\|_{\HK}$ is uniformly bounded for $0<\delta\le\delta_0$. Moreover, by the definition of $S_\delta^\Int$ and \eqref{eq:regularization-bound}, the approximation error is bounded by $\eta^\gamma S_\delta^\Int$.
Therefore, \eqref{eq:uniform-source-norm} and Theorem~\ref{thm:global-error} give
\[
\|\widehat h_{\lambda,N,L}^{\Int}-H\|_{\HK} = \mathcal O_{\mathbb P} \left( \eta^{-1}N^{-1/2} +\eta^\gamma +\delta^{r_\Int}\right).
\]
Balancing the estimation and approximation terms yields  $\eta_N\asymp N^{-1/(2\gamma+2)}, $ and $ \eta_N^\gamma \asymp N^{-\gamma/(2\gamma+2)}.$
Choosing $\delta_N$ such that $\delta_N^{r_\Int} \asymp N^{-\gamma/(2\gamma+2)}$ gives the stated scaling of $\delta_N$ and makes all three error terms of order $N^{-\gamma/(2\gamma+2)}$. Finally, $\lambda_N=\eta_N\delta_N^2$ gives the claimed scaling of $\lambda_N$.
\end{proof}

In particular, the two integrators attain the same balanced reconstruction rate, but the required step-size refinements depend on the order of the integrator-induced error. More precisely,
\[
\delta_N^{\MP}\asymp N^{-\gamma/[4(\gamma+1)]}, \qquad \delta_N^{\SE} \asymp N^{-\gamma/[2(\gamma+1)]}.
\]
Thus, the second-order inverse-target accuracy of the implicit midpoint rule allows a slower refinement of the observation step than symplectic Euler while retaining the same overall reconstruction rate.

\begin{remark}[Relaxing the uniform source condition]
The uniform source bound \eqref{eq:uniform-source-norm} can be relaxed.
Suppose, more generally, that for some $\beta\ge0$, $S_\delta^\Int\lesssim\delta^{-\beta},$ as $\delta\to0.$
Then the three contributions in Theorem~\ref{thm:global-error} scale as $\eta^{-1}N^{-1/2}, \eta^\gamma\delta^{-\beta}, $ and $ \delta^{r_\Int}.$
Balancing these terms gives $\eta_N \asymp N^{-(r_\Int+\beta)/[2(r_\Int(\gamma+1)+\beta)]}, $ and $ \delta_N \asymp N^{-\gamma/[2(r_\Int(\gamma+1)+\beta)]},$ and consequently
\[
\|\widehat h_{\lambda_N,N,L}^{\Int}-H\|_{\HK} = \mathcal O_{\mathbb P} \left( N^{-r_\Int\gamma/[2(r_\Int(\gamma+1)+\beta)]}\right).
\]
Corollary~\ref{cor:balanced-joint-rate} corresponds to the case $\beta=0$. Thus the convergence rate reflects a competition between the integrator order $r_\Int$ and the deterioration of the source norm as $\delta\to0$.
\end{remark}

\subsection{Pathwise robustness under additive observation noise}
\label{sec:deterministic-perturbations}

The analysis above concerns the exact trajectory observations. We now consider the noisy observations introduced in \eqref{eq:noisy-observation}.
The stability analysis is pathwise with respect to the observation noise: for each realization of the noise variables, we compare the estimator constructed from the noisy trajectory states with its clean-data counterpart. 

\paragraph{Noisy scheme-adapted observations.}

Fix $\Int\in\{\MP,\SE\}$ and recall that the noisy trajectory states are
\begin{align}\label{eq:perturbed-data}
\widetilde z_l^{(n)}=z_l^{(n)}+e_l^{(n)},    
\end{align}
where $e_l^{(n)}$ are $\RR^{2d}$-valued random variables. For the corresponding latent exact-flow pair $(z_{l-1}^{(n)},z_l^{(n)})$, let $x_l^{\Int,(n)}:=x^\Int(z_{l-1}^{(n)},z_l^{(n)})$ and $v_{\delta,l}^{(n)}:=\frac{z_l^{(n)}-z_{l-1}^{(n)}}{\delta}.$
By the exact inverse-target identity,
\begin{equation}\label{eq:exact-inverse-target-identity}
v_{\delta,l}^{(n)}
=
J\nabla\HInt_\delta
\bigl(x_l^{\Int,(n)}\bigr).
\end{equation}

Writing
$e_l^{(n)}=(e_{q,l}^{(n)},e_{p,l}^{(n)})$,
the noisy scheme-adapted anchor takes the form $\widetilde x_l^{\Int,(n)}=x_l^{\Int,(n)}+U_l^{\Int,(n)},$
where the induced anchor-noise random variable is
\[
U_l^{\MP,(n)}=\frac12\left(e_{l-1}^{(n)}+e_l^{(n)}\right),
\qquad
U_l^{\SE,(n)}=\left(e_{q,l-1}^{(n)},e_{p,l}^{(n)}\right).
\]
The noisy discrete velocity satisfies
\begin{equation}\label{eq:perturbed-velocity}
\widetilde v_{\delta,l}^{(n)}=v_{\delta,l}^{(n)}+W_l^{(n)},
\qquad
W_l^{(n)}:=\frac{e_l^{(n)}-e_{l-1}^{(n)}}{\delta},
\end{equation}
where $W_l^{(n)}$ is the induced velocity-noise random variable.
Thus observation noise enters the regression problem through two channels: an integrator-dependent perturbation of the regression input and a perturbation of the regression output.

\paragraph{Stability of the estimator.}
Given the noisy trajectory observations \eqref{eq:perturbed-data}, we consider the minimization problem
\begin{equation*}
\widehat h_{\lambda,NL}^{U,W}:=\arg\min_{h\in\HK}\left\{\frac{\delta^2}{NL} \sum_{n=1}^N\sum_{l=1}^L \left|J\nabla h\bigl(x_l^{\Int,(n)}+U_l^{\Int,(n)}\bigr)-\bigl(v_{\delta,l}^{(n)}+W_l^{(n)}\bigr)\right|^2+\lambda\|h\|_{\HK}^2  \right\}.
\end{equation*}

To derive an operator representation, define the {\bfi noisy empirical sampling operator} $A_{NL}^{U,\Int}:\HK\longrightarrow\RR^{2dNL}$ by
\begin{equation*}
A_{NL}^{U,\Int}h:=\frac{1}{\sqrt{NL}}\operatorname{Vec}\left(J\nabla h(x_1^{\Int,(1)}+U_1^{\Int,(1)}),\ldots,J\nabla h(x_L^{\Int,(N)}+U_L^{\Int,(N)})\right),
\end{equation*}
with associated covariance operator $B_{NL}^{U,\Int}:=(A_{NL}^{U,\Int})^*A_{NL}^{U,\Int}$.
Set $\eta:=\frac{\lambda}{\delta^2}$. We also set $W_{NL}:=\operatorname{Vec}\bigl(W_1^{(1)},\ldots,W_L^{(1)},\ldots, W_1^{(N)},\ldots,W_L^{(N)}\bigr).$
Arguing as in Proposition~\ref{prop:operator-representations}, we obtain
\begin{equation}\label{eq:perturbed-operator-representation}
\widehat h_{\lambda,NL}^{U,W}=\frac{1}{\sqrt{NL}}\left(B_{NL}^{U,\Int}+\eta I\right)^{-1}(A_{NL}^{U,\Int})^*\left(v_{NL}+W_{NL}\right).
\end{equation}

The representation \eqref{eq:perturbed-operator-representation} shows that the observation noise affects the estimator in two different ways.
The velocity-noise term $W_{NL}$ changes the right-hand side of the normal equation, whereas the anchor-noise term $U$ changes the sampling operator itself.
To separate these two effects,
introduce the auxiliary estimator
\begin{align}\label{eq:h-U-0}
\widehat h_{\lambda,NL}^{U,0}
:=
\frac{1}{\sqrt{NL}}
\left(B_{NL}^{U,\Int}+\eta I\right)^{-1}
(A_{NL}^{U,\Int})^*v_{NL},
\end{align}
which uses the noisy anchors but the clean velocities. Then
\[
\widehat h_{\lambda,NL}^{U,W}
-\widehat h_{\lambda,N,L}^{\Int}
=
\left(
\widehat h_{\lambda,NL}^{U,W}
-\widehat h_{\lambda,NL}^{U,0}
\right)
+
\left(
\widehat h_{\lambda,NL}^{U,0}
-\widehat h_{\lambda,N,L}^{\Int}
\right).
\]
The first difference isolates the perturbation of the regression outputs, while the second isolates the perturbation of the regression inputs.
For a fixed realization of the observation noise, we therefore define the pathwise noise levels
\begin{equation}\label{eq:pathwise-noise-radii}
r_U:=\frac{1}{NL}\sum_{n=1}^N\sum_{l=1}^L|U_l^{\Int,(n)}|,
\qquad
r_W:=\left(\frac{1}{NL}\sum_{n=1}^N\sum_{l=1}^L|W_l^{(n)}|^2\right)^{1/2}.
\end{equation}

\begin{proposition}[Pathwise stability]
\label{prop:deterministic-noisy-sensitivity}
Let $\Int\in\{\MP,\SE\}$ and suppose that Assumption~\ref{ass:domain} holds with $s>d+2$. 
Assume that the line segment joining each clean anchor $x_l^{\Int,(n)}$ and its noisy counterpart $x_l^{\Int,(n)}+U_l^{\Int,(n)}$ is contained in $\Omega$.
Then, for every $\eta=\lambda/\delta^2>0$,
\begin{equation*}
\left\|
\widehat h_{\lambda,NL}^{U,W}-\widehat h_{\lambda,N,L}^{\Int}\right\|_{\HK}\le\frac{r_W}{2\sqrt{\eta}}+\frac{3\kappa_1\kappa_2}{\eta} \|\HInt_\delta\|_{\HK}\,r_U,
\end{equation*}
where $\kappa_1,\kappa_2>0$ are constants determined by the continuous embedding $\HK\hookrightarrow C^2(\overline\Omega)$.
\end{proposition}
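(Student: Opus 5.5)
We split the error along the decomposition already introduced before the statement,
\[
\widehat h_{\lambda,NL}^{U,W}-\widehat h_{\lambda,N,L}^{\Int}
=\bigl(\widehat h_{\lambda,NL}^{U,W}-\widehat h_{\lambda,NL}^{U,0}\bigr)+\bigl(\widehat h_{\lambda,NL}^{U,0}-\widehat h_{\lambda,N,L}^{\Int}\bigr),
\]
and bound the two pieces separately. Throughout, write $A_U:=A_{NL}^{U,\Int}$, $B_U:=B_{NL}^{U,\Int}$, $A:=A_{NL}^{\Int}$, $B:=B_{NL}^{\Int}$.

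\emph{Velocity channel.} By \eqref{eq:perturbed-operator-representation} and \eqref{eq:h-U-0}, the first difference equals $(NL)^{-1/2}(B_U+\eta I)^{-1}A_U^*W_{NL}$. The singular value decomposition of $A_U$ gives $\|(A_U^*A_U+\eta I)^{-1}A_U^*\|_{\mathrm{op}}\le\sup_{\sigma\ge0}\sigma/(\sigma^2+\eta)=1/(2\sqrt\eta)$. Moreover, $(NL)^{-1/2}|W_{NL}|=r_W$ by \eqref{eq:pathwise-noise-radii}. This yields the term $r_W/(2\sqrt\eta)$ directly.

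\emph{Anchor channel.} By \eqref{eq:exact-inverse-target-identity}, $v_{NL}=\sqrt{NL}\,A\HInt_\delta$. Hence
\[
\widehat h_{\lambda,NL}^{U,0}=(B_U+\eta I)^{-1}A_U^*A\HInt_\delta,
\qquad
\widehat h_{\lambda,N,L}^{\Int}=(B+\eta I)^{-1}B\HInt_\delta .
\]
Set $R_U=(B_U+\eta I)^{-1}$ and $R=(B+\eta I)^{-1}$. We write the difference as
\[
R_UA_U^*(A-A_U)\HInt_\delta+\bigl(R_UB_U-RB\bigr)\HInt_\delta,
\]
and use the resolvent identity on the second term:
\[
R_UB_U-RB=\eta(R-R_U)=\eta R_U(B_U-B)R .
\]
The factor $B_U-B$ is then expanded as $A_U^*(A_U-A)+(A_U-A)^*A$.

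\emph{Operator bounds.} The key estimates are:
\begin{itemize}
\item $\|R_U\|\le\eta^{-1}$ and $\|\eta R\|\le1$.
\item $\|A_U^*\|,\|A\|\le\kappa_1$, from \eqref{eq:gradient-perturbation-bounds}.
\item The difference operator $A_U-A:\HK\to\RR^{2dNL}$. Each block $J\nabla h(x+U)-J\nabla h(x)$ is bounded by $\kappa_2|U|\,\|h\|_{\HK}$, using the segment hypothesis and the Lipschitz estimate in Remark~\ref{rem:derivative-bounds}. So $|(A_U-A)h|\le\kappa_2\bigl(\tfrac1{NL}\sum|U_l^{(n)}|^2\bigr)^{1/2}\|h\|_{\HK}$.
\end{itemize}
Combining the three pieces gives at most $3\cdot\eta^{-1}\kappa_1\kappa_2\,(\text{anchor size})\,\|\HInt_\delta\|_{\HK}$, which is the claimed coefficient $3\kappa_1\kappa_2/\eta$. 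In the resolvent term we also need $\|R\HInt_\delta\|$-type factors; the factor $\eta R$ absorbs $\eta$, leaving $\|\HInt_\delta\|_{\HK}$.

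\emph{Main obstacle.} The delicate point is matching the stated noise level $r_U$, which is an $\ell^1$ average of $|U|$, whereas the natural operator-norm bound produces the quadratic mean. To reach $r_U$, we would avoid operator norms on $A_U-A$. Instead we work with the $\HK$-valued form
\[
A_U^*(A_U-A)g=\frac1{NL}\sum_{n,l}\Bigl(\text{kernel section at }x+U\Bigr)^\top\bigl(J\nabla g(x+U)-J\nabla g(x)\bigr),
\]
and apply the triangle inequality term by term. Each summand has norm at most $\kappa_1\kappa_2|U_l^{(n)}|\,\|g\|$, so the average is bounded by $\kappa_1\kappa_2r_U\|g\|$. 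We treat the adjoint piece $(A_U-A)^*A$ the same way, writing it as an average of differences of kernel sections $J\nabla_1K(x+U,\cdot)-J\nabla_1K(x,\cdot)$. Its $\HK$-norm is controlled by $\kappa_2|U|$ through the same embedding, applied by duality.

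If this per-term duality bound causes trouble, the fallback is to prove the estimate with the quadratic mean of $|U|$ in place of $r_U$.
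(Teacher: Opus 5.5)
Your proposal is correct and follows essentially the paper's proof. You use the same velocity/anchor split with the singular-value bound $1/(2\sqrt{\eta})$, and your rearrangement $R_UA_U^*(A-A_U)\HInt_\delta+R_U(B_U-B)\,\eta R\HInt_\delta$ is algebraically the paper's subtracted normal equations, since $\eta R\HInt_\delta=\HInt_\delta-\widehat h_{\lambda,N,L}^{\Int}$; it yields the same constant $1+2=3$.

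The termwise triangle inequality you propose for reaching the $\ell^1$ level $r_U$ is exactly what the paper does: it bounds $\langle Af,(A_U-A)g\rangle$ summand by summand. Your duality bound on differences of kernel sections is also valid (alternatively, use $\|(A_U-A)^*A\|_{\mathrm{op}}=\|A^*(A_U-A)\|_{\mathrm{op}}$), so the quadratic-mean fallback is unnecessary.
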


\begin{proof}
We first control the output perturbation. By \eqref{eq:perturbed-operator-representation} and the definition of $\widehat h_{\lambda,NL}^{U,0}$ in \eqref{eq:h-U-0},
\[
\widehat h_{\lambda,NL}^{U,W}- \widehat h_{\lambda,NL}^{U,0} = \frac{1}{\sqrt{NL}} \left(B_{NL}^{U,\Int}+\eta I\right)^{-1} (A_{NL}^{U,\Int})^*W_{NL}.
\]
Since $B_{NL}^{U,\Int} = (A_{NL}^{U,\Int})^*A_{NL}^{U,\Int},$
the action of $\left(B_{NL}^{U,\Int}+\eta I\right)^{-1} (A_{NL}^{U,\Int})^*$ along a singular direction of $A_{NL}^{U,\Int}$ with singular value $s$ is multiplication by $\frac{s}{s^2+\eta}.$
Therefore,
\[
\left\|\left(B_{NL}^{U,\Int}+\eta I\right)^{-1}(A_{NL}^{U,\Int})^*\right\|_{\mathrm{op}}\le\sup_{s\ge0}\frac{s}{s^2+\eta}= \frac{1}{2\sqrt{\eta}},
\]
where the maximum is attained at $s=\sqrt{\eta}$. Since $\frac{1}{\sqrt{NL}}\|W_{NL}\|=r_W$, we obtain
\begin{equation}\label{eq:output-perturbation-bound}
\left\|
\widehat h_{\lambda,NL}^{U,W}-\widehat h_{\lambda,NL}^{U,0}\right\|_{\HK}\le \frac{1}{\sqrt{NL}}\left\|\left(B_{NL}^{U,\Int}+\eta I\right)^{-1}(A_{NL}^{U,\Int})^*\right\|_{\mathrm{op}}\|W_{NL}\|\le \frac{r_W}{2\sqrt{\eta}}.
\end{equation}

We next control the perturbation of the regression inputs. By the exact inverse-target identity in \eqref{eq:exact-inverse-target-identity}, we have
\begin{align}\label{eq:identity1}
\left(B_{NL}^{U,\Int}+\eta I\right)\widehat h_{\lambda,NL}^{U,0}=(A_{NL}^{U,\Int})^*A_{NL}^{\Int}\HInt_\delta,
\end{align}
whereas Proposition~\ref{prop:operator-representations} gives
\begin{align}\label{eq:identity2}
\left(B_{NL}^{\Int}+\eta I\right)\widehat h_{\lambda,N,L}^{\Int}=B_{NL}^{\Int}\HInt_\delta.
\end{align}
Subtracting the identities \eqref{eq:identity1} and \eqref{eq:identity2} yields
\[
\left(B_{NL}^{U,\Int}+\eta I\right)\left(\widehat h_{\lambda,NL}^{U,0}-\widehat h_{\lambda,N,L}^{\Int}\right)=\left((A_{NL}^{U,\Int})^*A_{NL}^{\Int}-B_{NL}^{\Int}\right)\HInt_\delta-\left(B_{NL}^{U,\Int}-B_{NL}^{\Int}\right)\widehat h_{\lambda,N,L}^{\Int}.
\]

We now estimate the two operator perturbations.
For any $f,g\in\HK$ with $\|f\|_{\HK},\|g\|_{\HK}\le1$, the bounds \eqref{eq:gradient-perturbation-bounds} and definition \eqref{eq:pathwise-noise-radii} give
\begin{align*}
&\left|\left\langle\left((A_{NL}^{U,\Int})^*A_{NL}^{\Int}-B_{NL}^{\Int}\right)f,g\right\rangle_{\HK}\right|=\left|\left\langle A_{NL}^{\Int}f,\,\left(A_{NL}^{U,\Int}-A_{NL}^{\Int}\right)g\right\rangle_{\RR^{2dNL}}\right|\\
&\qquad=\frac1{NL}\left|\sum_{n=1}^N\sum_{l=1}^L\left\langle J\nabla f(x_l^{\Int,(n)}),\,J\nabla g(x_l^{\Int,(n)}+U_l^{\Int,(n)})-J\nabla g(x_l^{\Int,(n)})\right\rangle\right|\\
&\qquad\le\frac1{NL}\sum_{n=1}^N\sum_{l=1}^L\kappa_1\kappa_2|U_l^{\Int,(n)}|=\kappa_1\kappa_2 r_U.
\end{align*}
Therefore,
\begin{equation}\label{eq:cross-operator-perturbation}
\left\|
(A_{NL}^{U,\Int})^*A_{NL}^{\Int}
-
B_{NL}^{\Int}
\right\|_{\mathrm{op}}
\le
\kappa_1\kappa_2 r_U.
\end{equation}

Similarly, using 
\[
B_{NL}^{U,\Int}-B_{NL}^{\Int}=\bigl(A_{NL}^{U,\Int}-A_{NL}^{\Int}\bigr)^*A_{NL}^{\Int}+(A_{NL}^{U,\Int})^*\bigl(A_{NL}^{U,\Int}-A_{NL}^{\Int}\bigr),
\]
and applying the same argument to each term, we obtain
\begin{equation}\label{eq:covariance-operator-perturbation}
\left\|B_{NL}^{U,\Int}-B_{NL}^{\Int}\right\|_{\mathrm{op}}\le 2\kappa_1\kappa_2 r_U.
\end{equation}

Since $B_{NL}^{\Int}$ and $B_{NL}^{U,\Int}$ are positive semidefinite, we obtain $\left\|(B_{NL}^{\Int}+\eta I)^{-1}B_{NL}^{\Int}\right\|_{\mathrm{op}}\le1, $ and $\left\|(B_{NL}^{U,\Int}+\eta I)^{-1}\right\|_{\mathrm{op}}\le\eta^{-1}.$
In particular,
$\|\widehat h_{\lambda,N,L}^{\Int}\|_{\HK}\le\|\HInt_\delta\|_{\HK}$.
Together with \eqref{eq:cross-operator-perturbation}--\eqref{eq:covariance-operator-perturbation}, the preceding identity therefore yields
\begin{equation}\label{eq:input-perturbation-bound}
\left\|\widehat h_{\lambda,NL}^{U,0}-\widehat h_{\lambda,N,L}^{\Int}\right\|_{\HK}\le\frac{3\kappa_1\kappa_2}{\eta}\|\HInt_\delta\|_{\HK}r_U.
\end{equation}
The conclusion follows from \eqref{eq:output-perturbation-bound} and \eqref{eq:input-perturbation-bound}.
\end{proof}

Combining Proposition~\ref{prop:deterministic-noisy-sensitivity} with Theorem~\ref{thm:global-error} gives the overall reconstruction bound
\begin{align*}
\|\widehat h_{\lambda,NL}^{U,W}-H\|_{\HK} \le\ & C\frac{2d\kappa^2}{\eta} \sqrt{\frac{\log(4/\varepsilon)}{N}} \|\HInt_\delta\|_{\HK} + \eta^\gamma\|\psi_\delta^\Int\|_{\HK} + C_\Int\delta^{r_\Int}\\
&+ \frac{r_W}{2\sqrt{\eta}}+\frac{3\kappa_1\kappa_2}{\eta}\|\HInt_\delta\|_{\HK} r_U,
\end{align*}
with probability at least $1-\varepsilon$.

Thus the total reconstruction error separates into four contributions:
finite-sample estimation error, approximation error, integrator-induced error,
and observation noise. The observation noise acts through two different channels: velocity noise is amplified by $\eta^{-1/2}$, whereas anchor noise is amplified by $\eta^{-1}$.
Since endpoint errors are differenced in the discrete velocity, \eqref{eq:perturbed-velocity} further reveals an explicit $\delta^{-1}$ sensitivity. Hence decreasing the observation step reduces the integrator-induced error
but may simultaneously amplify observation noise.

\section{Richardson extrapolation and integrator-error reduction}
\label{sec:richardson}

The reconstruction bound in Theorem \ref{thm:global-error}  contains an integrator-induced error
\[
\|\HInt_\delta-H\|_{\mathcal H_K}
=\mathcal O(\delta^{r_\Int}),
\qquad r_{\MP}=2,\qquad r_{\SE}=1,
\]
which cannot be reduced by increasing the sample size or decreasing the
regularization parameter. Once the estimation and approximation errors fall
below this level, the discrepancy between the inverse-modified target and the
physical Hamiltonian becomes the dominant source of reconstruction error.

A useful feature of trajectory data is that this integrator-induced error
can be reduced without collecting additional trajectories. From the same uniformly sampled trajectory $z_0,z_1,\ldots,z_L, $ with $ z_l=\varphi_\delta^H(z_{l-1}),$
we may form not only adjacent pairs $(z_{l-1},z_l)$, but also pairs $(z_{l-s},z_l)$ separated by $s$ time steps. Such a pair represents the exact flow over the interval $s\delta$ and, with the corresponding scheme-adapted anchor, provides an exact derivative observation of the inverse modified Hamiltonian $\check H_{s\delta}^{\Int}$.

The resulting stride-dependent targets have systematically related deterministic biases. For implicit midpoint, $\check H_{s\delta}^{\MP}= H+\sum_{j\ge1}s^{2j}\delta^{2j}\check H_{2j}^{\MP},$ whereas for symplectic Euler, $\check H_{s\delta}^{\SE}= H+\sum_{j\ge1}s^j\delta^j\check H_j^{\SE}.$
Consequently, suitable linear combinations of the stride estimators cancel the leading inverse-target corrections. With $Q$ strides, the deterministic bias improves from $\mathcal O(\delta^2)$ to $\mathcal O(\delta^{2Q})$ for implicit midpoint and from $\mathcal O(\delta)$ to $\mathcal O(\delta^Q)$ for symplectic Euler.

Figure~\ref{fig:richardson-data-reuse} illustrates the basic two-level construction. The same observed trajectory is reused at two temporal separations: stride~$1$ yields a fit targeting $\check H_\delta^\Int$, whereas stride~$2$ yields a second fit targeting $\check H_{2\delta}^\Int$. Their leading biases have known relative scalings and can therefore be canceled by an appropriate linear combination. 
For example, the combinations $\frac43\widehat h_{1,\delta}^{\MP}-\frac13\widehat h_{2,\delta}^{\MP}$ and $2\widehat h_{1,\delta}^{\SE} -\widehat h_{2,\delta}^{\SE}$ raise the deterministic orders from $2$ to $4$ and from $1$ to $2$, respectively.

\begin{figure}[ht]
\centering
\begin{tikzpicture}[x=1cm,y=1cm,>=Stealth,line cap=round,line join=round]
  \colorlet{strideone}{blue!68!black}
  \colorlet{stridetwo}{orange!88!black}

  \draw[gray!65,line width=0.8pt] (0,1.45) -- (5,1.45);
  \foreach \a/\b in {0/1.25,1.25/2.5,2.5/3.75,3.75/5}
    \draw[strideone,line width=1pt] (\a,1.43) to[bend right=27] (\b,1.43);
  \foreach \a/\b in {0/2.5,1.25/3.75,2.5/5}
    \draw[stridetwo,line width=1pt] (\a,1.47) to[bend left=24] (\b,1.47);
  \foreach \x/\i in {0/0,1.25/1,2.5/2,3.75/3,5/4}{
    \fill[black] (\x,1.45) circle (2pt);
    \node[below=2pt,fill=white,inner sep=0.5pt] at (\x,1.45) {$z_{\i}$};
  }
  \node[font=\footnotesize] at (2.5,2.85) {one observed trajectory};
  \node[stridetwo,font=\scriptsize,fill=white,inner sep=1pt] at (2.5,2.23)
    {stride $2$: lag $2\delta$};
  \node[strideone,font=\scriptsize,fill=white,inner sep=1pt] at (2.5,0.70)
    {stride $1$: lag $\delta$};

  \node[draw=stridetwo,fill=orange!6,rounded corners=1pt,align=center,
        text width=4.25cm,inner sep=4pt,font=\footnotesize] (fittwo) at (8.05,2.25)
    {\textcolor{stridetwo}{\textbf{stride 2 fit}}\\[-1pt]
     $\widehat h^{\Int}_{2,\delta}\ \leadsto\ \check H^{\Int}_{2\delta}$\\[-1pt]
     same trajectory, lag $2\delta$};
  \node[draw=strideone,fill=blue!5,rounded corners=1pt,align=center,
        text width=4.25cm,inner sep=4pt,font=\footnotesize] (fitone) at (8.05,0.42)
    {\textcolor{strideone}{\textbf{stride 1 fit}}\\[-1pt]
     $\widehat h^{\Int}_{1,\delta}\ \leadsto\ \check H^{\Int}_\delta$\\[-1pt]
     same trajectory, lag $\delta$};
  \draw[->,stridetwo,line width=0.8pt] (5.12,1.72) to[bend left=8] (fittwo.west);
  \draw[->,strideone,line width=0.8pt] (5.12,1.18) to[bend right=8] (fitone.west);

  \node[draw=black!70,fill=green!7,rounded corners=1pt,align=center,
        text width=4.65cm,inner sep=5pt,font=\footnotesize] (combined) at (13.45,1.34)
    {\textbf{Richardson combination}\\[-1pt]
     {\scriptsize MP: $\tfrac43\widehat h^{\MP}_{1,\delta}-\tfrac13\widehat h^{\MP}_{2,\delta}$,
      $H+\mathcal O(\delta^4)$}\\[-1pt]
     {\scriptsize SE: $2\widehat h^{\SE}_{1,\delta}-\widehat h^{\SE}_{2,\delta}$,
      $H+\mathcal O(\delta^2)$}};
  \draw[->,gray!75,line width=0.8pt] (fittwo.east) to[bend left=10] (combined.north west);
  \draw[->,gray!75,line width=0.8pt] (fitone.east) to[bend right=10] (combined.south west);
\end{tikzpicture}
\caption{
Two-level Richardson extrapolation using multistride pairs from the same observed trajectory. Strides~$1$ and~$2$ produce estimators targeting $\check H_\delta^\Int$ and $\check H_{2\delta}^\Int$, respectively. Their linear combination cancels the leading integrator-dependent correction.
}
\label{fig:richardson-data-reuse}
\end{figure}

The resulting procedure is therefore a post-processing construction: the stride-specific estimators are trained separately and subsequently combined with Richardson weights. No additional trajectories or higher-order numerical integrators are required. 
In particular, Richardson extrapolation is applied here to the learned inverse modified Hamiltonians, rather than to numerical approximations of the trajectories themselves.

\subsection{Common multistride data and estimators}
\label{sec:rich-stride}

We first formulate the regression problem associated with each stride. The key observation is that a stride-$s$ pair is an exact-flow pair over the effective time step $s\delta$. Hence the scheme-adapted observation identity derived for the base step remains valid, with $\delta$ replaced by $s\delta$.

\paragraph{Stride-$s$ data and exact target.}
Consider the trajectories
\[
z_l^{(n)}=(q_l^{(n)},p_l^{(n)}), \qquad n=1,\ldots,N,\qquad l=0,\ldots,L,
\]
and fix an integer stride $s\in\{1,\ldots,Q\}$, with $Q\le L$.
For each $l=s,\ldots,L$, the pair $(z_{l-s}^{(n)},z_l^{(n)})$ spans the time interval $s\delta$. For $\Int\in\{\MP,\SE\}$, define the corresponding scheme-adapted anchor and discrete velocity by
\begin{align*}
x_{s,l}^{\Int,(n)}:=
\begin{cases}
\displaystyle
\frac12\bigl(z_{l-s}^{(n)}+z_l^{(n)}\bigr),
& \Int=\MP,\\[1mm]
\bigl(q_{l-s}^{(n)},p_l^{(n)}\bigr),
& \Int=\SE,
\end{cases}\qquad
v_{s,l}^{(n)}:=\frac{z_l^{(n)}-z_{l-s}^{(n)}}{s\delta}.
\end{align*}
Each trajectory contributes $L_s:=L-s+1$ stride-$s$ pairs, for a total of $NL_s$ observations.
Since $z_l^{(n)}=\varphi_{s\delta}^H(z_{l-s}^{(n)}),$ the exact inverse-target identities established for the base step apply with $\delta$ replaced by $s\delta$. In particular,
\begin{equation}\label{eq:stride-exact-target}
v_{s,l}^{(n)}=J\nabla \check H_{s\delta}^{\Int}
\bigl(x_{s,l}^{\Int,(n)}\bigr).
\end{equation}
Thus the stride-$s$ observations form an exact regression problem for
the inverse modified Hamiltonian $\check H_{s\delta}^{\Int}$, rather
than for the physical Hamiltonian $H$.

\paragraph{Stride-dependent population law.}
At the population level, define
\[
X_{s,l}^{\Int}(z_0):=
\begin{cases}
\displaystyle \frac12\left(\varphi_{(l-s)\delta}^H(z_0)+\varphi_{l\delta}^H(z_0)\right),
& \Int=\MP,\\[1mm]
\left(\pi_q\varphi_{(l-s)\delta}^H(z_0),\pi_p\varphi_{l\delta}^H(z_0)\right),
& \Int=\SE,
\end{cases}
\]
and let $\rho_s^{\Int}:=\frac1{L_s}\sum_{l=s}^L (X_{s,l}^{\Int})_\#\mu_0$ be the stride-averaged input law. For $s=1$, $X_{1,l}^{\Int}$ and $\rho_{1}^{\Int}$ reduce to the corresponding base-step anchor map and averaged input law introduced in \eqref{eq:population-anchor-map} and \eqref{eq:rho-generic}. Associated with $\rho_s^\Int$, define $A_s^\Int:\HK\to L^2(\rho_s^\Int;\mathbb R^{2d}),$ with $A_s^\Int h=J\nabla h,$ and denote $B_s^\Int:=(A_s^\Int)^*A_s^\Int.$

For the remainder of this section, we assume that $\operatorname{supp}\rho_s^\Int\subset\Omega, $ for all $s=1,\ldots,Q$.
This is the natural multistride counterpart of the support condition in Assumption~\ref{ass:domain}.
The restriction to integer strides is only used to reuse the same uniformly sampled trajectories; Richardson extrapolation itself applies to arbitrary step sizes.

\paragraph{Stride-$s$ regression problem.}
Given a stride-dependent regularization parameter $\lambda_s>0$, define
\begin{equation*}
\widehat h_{s,\lambda_s,NL_s}^{\Int}:= \arg\min_{h\in\HK}\left\{\frac{(s\delta)^2}{NL_s}\sum_{n=1}^N\sum_{l=s}^L\left|J\nabla h(x_{s,l}^{\Int,(n)})-v_{s,l}^{(n)}\right|^2+\lambda_s\|h\|_{\HK}^2\right\}.
\end{equation*}
By \eqref{eq:stride-exact-target}, this is precisely the same regularized regression problem as in the base-step analysis, with $\delta$ replaced by $s\delta$ and target
$\check H_{s\delta}^{\Int}$.

Let $A_{s,NL_s}^{\Int}$ and $v_{s,NL_s}^{\Int}$ denote the empirical sampling operator and observation vector associated with the stride-$s$ data, defined analogously to their base-step counterparts in \eqref{eq:operator-AN} and \eqref{eq:observation-vector}, and set $B_{s,NL_s}^{\Int}:=(A_{s,NL_s}^{\Int})^*A_{s,NL_s}^{\Int}.$
Then, arguing as in Proposition~\ref{prop:operator-representations}, the stride-$s$ estimator admits the operator representation
\begin{equation*}
\widehat h_{s,\lambda_s,NL_s}^{\Int}=\frac{1}{\sqrt{NL_s}}\left( B_{s,NL_s}^{\Int}+\eta_s I \right)^{-1}(A_{s,NL_s}^{\Int})^*v_{s,NL_s}^{\Int},
\end{equation*}
where $\eta_s:=\frac{\lambda_s}{(s\delta)^2}.$ Thus $\eta_s$ is the effective regularization parameter associated with stride $s$. 

For the remainder of this section, we use the stride-balanced regularization
\begin{equation}\label{eq:stride-balanced-regularization}
\lambda_s=(s\delta)^2\eta, \qquad s=1,\ldots,Q,
\end{equation}
so that all stride levels have the same effective regularization parameter $\eta$. Since the focus of this section is the dependence on the effective step size $s\delta$, we suppress the fixed statistical parameters and write
\begin{equation}\label{eq:stride-est-short}
\widehat h_{s,\delta}^{\Int}:=\widehat h_{s,\lambda_s,NL_s}^{\Int}.
\end{equation}
Thus, for each stride $s$, the estimator $\widehat h_{s,\delta}^{\Int}$ targets the inverse modified Hamiltonian $\check H_{s\delta}^{\Int}$.

\subsection{Richardson extrapolation of the stride estimators}

We now combine the stride estimators introduced in Section~\ref{sec:rich-stride}. The construction exploits the fact that the inverse modified Hamiltonians at different strides share the same small-step expansion, while the dependence of each correction term on the stride is known explicitly.
Recall that
\[
\check H_{s\delta}^{\Int}=H+\sum_{j\ge 1}s^{r_{\Int}j}\delta^{r_{\Int}j}\check H_{r_{\Int}j}^{\Int},\qquad r_{\MP}=2,\qquad r_{\SE}=1,
\]
where for implicit midpoint the odd coefficients vanish by symmetry.
This suggests choosing weights that preserve the physical Hamiltonian while cancelling the first $Q-1$ inverse-modified corrections.

\begin{definition}[Richardson coefficients]
\label{def:rich-coeff}
Fix $\Int\in\{\MP,\SE\}$ and $Q\ge2$. The {\bfi Richardson coefficients} $\alpha_1^{\Int},\ldots,\alpha_Q^{\Int}\in\RR$ are the unique solution of the Vandermonde system
\begin{equation}\label{eq:vandermonde}
\sum_{s=1}^{Q}\alpha_s^{\Int}=1,\qquad \sum_{s=1}^{Q}\alpha_s^{\Int}s^{r_{\Int}j}=0, \qquad j=1,\ldots,Q-1.
\end{equation}
\end{definition}
We also define the {\bfi Richardson amplification factor} $\mathcal A_Q^\Int:=\sum_{s=1}^Q |\alpha_s^\Int|.$

\begin{example}[Two-level Richardson coefficients]
For $Q=2$, the Richardson coefficients are 
\[
(\alpha_1^{\MP},\alpha_2^{\MP})=\left(\frac43,-\frac13\right),
\qquad
(\alpha_1^{\SE},\alpha_2^{\SE})=(2,-1).
\]
Hence $\mathcal A_2^{\MP}=\frac53, \mathcal A_2^{\SE}=3,$ and the deterministic bias is raised from order $2$ to $4$ for implicit midpoint and from order $1$ to $2$ for symplectic Euler.
\end{example}

For $\Int\in\{\MP,\SE\}$, define the $Q$-level {\bfi Richardson estimator} by
\begin{equation}\label{eq:richardson-estimator}
\widehat h_{\delta}^{\Int,(Q)}:= \sum_{s=1}^{Q}\alpha_s^{\Int}\widehat h_{s,\delta}^{\Int},
\end{equation}
where $\widehat h_{s,\delta}^{\Int}$ is the stride-$s$ estimator defined in \eqref{eq:stride-est-short}. The corresponding Richardson combination of the exact inverse-modified targets is
\begin{equation}\label{eq:richardson-target}
\check H_{\delta}^{\Int,(Q)}:=\sum_{s=1}^{Q}\alpha_s^\Int\check H_{s\delta}^\Int.
\end{equation}
The estimator \eqref{eq:richardson-estimator} is therefore obtained by linear post-processing of separately fitted stride estimators, while \eqref{eq:richardson-target} isolates the deterministic extrapolation effect.

The following proposition quantifies the resulting cancellation of the
inverse-target bias.

\begin{proposition}[Cancellation of the leading inverse-target corrections]
\label{prop:richardson-target-bias}
Fix $\Int\in\{\MP,\SE\}$ and $Q\ge2$, and let $\alpha_1^\Int,\ldots,\alpha_Q^\Int$ be the Richardson coefficients from Definition~\ref{def:rich-coeff}.

\begin{enumerate}[label=(\roman*),leftmargin=*]
\item \emph{(Algebraic cancellation.)}
Suppose that Assumption~\ref{ass:analytic-setting} holds and that $Q\delta<\delta_*^\Int$. Then
\begin{equation}\label{eq:richardson-target-expansion}
\check H_{\delta}^{\Int,(Q)}-H=\sum_{j\ge Q}\left(\sum_{s=1}^Q\alpha_s^\Int s^{r_\Int j}\right)\delta^{r_\Int j}\check H_{r_\Int j}^\Int.
\end{equation}
In particular, the first $Q-1$ inverse-modified corrections are
cancelled exactly.

\item \emph{(Quantitative bound.)}
Suppose, in addition, that Assumption \ref{ass:domain} holds. Then there exist $\delta_{Q,\Int}>0$ and a constant $C_{Q,\Int}>0$, independent of $\delta$, such that
\begin{equation*}
\left\|\check H_{\delta}^{\Int,(Q)}-H\right\|_{\HK}\le C_{Q,\Int}\delta^{r_\Int Q}, \qquad 0<\delta<\delta_{Q,\Int}.
\end{equation*}
\end{enumerate}
\end{proposition}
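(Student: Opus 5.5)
The plan is to exploit the joint analyticity of the normalized family $\delta\mapsto\check H_\delta^\Int$ from Proposition~\ref{prop:exact-inverse-targets}. For part (i), I would fix $|\delta|<\delta_*^\Int$ and use the appendix constructions (Propositions~\ref{prop:imh-analytic} and~\ref{prop:imh-analytic-se}). These give, on a complex neighborhood of $\mathcal K$, a convergent expansion
\[
\check H_\tau^\Int=H+\sum_{k\ge1}\tau^k\check H_k^\Int,\qquad |\tau|<\delta_*^\Int,
\]
with $\check H_k^\Int=0$ for odd $k$ in the midpoint case by evenness. Writing the expansion with index $r_\Int j$ thus covers both schemes. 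Since $Q\delta<\delta_*^\Int$, I would substitute $\tau=s\delta$ for each $s=1,\ldots,Q$ and form $\sum_s\alpha_s^\Int\check H_{s\delta}^\Int$. This is a finite linear combination of convergent series, so the terms can be rearranged. The constant term is $(\sum_s\alpha_s^\Int)H=H$, and the coefficient of $\delta^{r_\Int j}\check H_{r_\Int j}^\Int$ is $\sum_s\alpha_s^\Int s^{r_\Int j}$, which vanishes for $1\le j\le Q-1$ by \eqref{eq:vandermonde}. This yields \eqref{eq:richardson-target-expansion}. The additive-constant normalization at $z_\star$ causes no trouble, because every $\check H_{s\delta}^\Int$ is normalized at the same point and the weights sum to one.

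For part (ii), I would avoid working coefficient by coefficient in $\HK$ and instead use a Cauchy-estimate argument. Set $F(\tau):=\check H_\tau^\Int-\sum_{k<r_\Int Q}\tau^k\check H_k^\Int$, a holomorphic function of $\tau$ with values in holomorphic functions on a fixed complex tube $\mathcal K_r$ around $\overline\Omega$. Then
\[
\check H_\delta^{\Int,(Q)}-H=\sum_s\alpha_s^\Int F(s\delta),
\]
because the truncated polynomial parts cancel exactly as in (i). $F$ vanishes to order $r_\Int Q$ at $\tau=0$. Fix $\rho<\delta_*^\Int$ with $\sup_{|\tau|\le\rho,\,z\in\mathcal K_r}|\check H_\tau^\Int(z)|\le M$. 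The maximum principle or Schwarz lemma applied to $\tau\mapsto F(\tau)/\tau^{r_\Int Q}$ then gives
\[
\sup_{z\in\mathcal K_r}|F(\tau)(z)|\le C|\tau|^{r_\Int Q}\quad\text{for }|\tau|\le\rho/2.
\]
Restricting to $\delta<\delta_{Q,\Int}:=\rho/(2Q)$ yields
\[
\sup_{\mathcal K_r}\bigl|\check H_\delta^{\Int,(Q)}-H\bigr|\le C\,\mathcal A_Q^\Int Q^{r_\Int Q}\delta^{r_\Int Q}.
\]

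The remaining step is to convert this sup bound on a complex tube into an $\HK$ bound. Since $\overline\Omega\subset\operatorname{int}\mathcal K$, Cauchy estimates on polydiscs of radius $r/2$ bound every derivative of order at most $m=\lceil s\rceil$ on $\overline\Omega$ by $C_m$ times the tube supremum. Because $\Omega$ is bounded, this controls the $H^m(\Omega)$ norm and hence the $H^s(\Omega)$ norm. Norm equivalence from Assumption~\ref{ass:domain} then gives the $\HK$ estimate. This is the same mechanism presumably used in Corollaries~\ref{cor:imh-native-bound} and~\ref{cor:imh-native-bound-se}, so I would cite that transfer lemma if one is available.

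The main obstacle is making sure the appendix provides joint holomorphy in $(\tau,z)$ on a common complex tube with a uniform bound $M$, rather than only real-analyticity for each fixed $\delta$. If it gives only coefficient bounds of the form $\|\check H_k^\Int\|_{C^0(\mathcal K_r)}\le M\rho^{-k}$, I would instead sum the tail of \eqref{eq:richardson-target-expansion} directly. Each weight satisfies $|\sum_s\alpha_s^\Int s^{r_\Int j}|\le\mathcal A_Q^\Int Q^{r_\Int j}$, so the tail is a geometric series bounded by a constant times $(Q\delta/\rho)^{r_\Int Q}$ once $Q\delta<\rho/2$. The same Cauchy-to-Sobolev transfer then finishes the argument.
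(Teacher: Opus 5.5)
Your proposal is correct. Part (i) is the same as the paper's argument: substitute the convergent expansions at $\tau=s\delta$, collect powers of $\delta$, and let the Vandermonde conditions \eqref{eq:vandermonde} annihilate the coefficients for $1\le j\le Q-1$.

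For part (ii), your fallback is exactly the paper's proof. The paper takes the native-space coefficient bounds $\|\check H_k^\Int\|_{\HK}\le C_\Omega\delta_*^{-k}$ from Corollaries~\ref{cor:imh-native-bound} and~\ref{cor:imh-native-bound-se}. It then uses $|\sum_s\alpha_s^\Int s^{r_\Int j}|\le\mathcal A_Q^\Int Q^{r_\Int j}$ and sums the tail of \eqref{eq:richardson-target-expansion} as a geometric series directly in $\HK$.

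Your primary route is genuinely different. You apply the maximum principle to the single holomorphic remainder $\tau\mapsto F(\tau)/\tau^{r_\Int Q}$ and get a sup bound on the complex tube for the combined function $\sum_s\alpha_s^\Int F(s\delta)$. You then perform the Cauchy-to-Sobolev transfer only once, at the end.

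What this buys you is independence from the infinite-series identity of part (i) read in $\HK$. The paper's tail summation tacitly uses that the $\delta$-series converges to $\check H_\delta^\Int$ in $\HK$. This does hold, because the coefficient bounds give absolute convergence and the embedding $\HK\hookrightarrow C^0(\overline\Omega)$ identifies the limit. Your route needs only the finite cancellation of the Taylor polynomial and a uniform bound on the jointly holomorphic family.

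That bound is available, so the obstacle you flag does not arise. The appendix gives joint holomorphy on $\mathcal K_\sigma\times\{|\delta|<2\delta_*\}$ and uses precisely the quantity $M_r=\sup_{|\zeta|=\delta_*,\,z\in\mathcal K_r}|\check H_\zeta(z)|<\infty$ in the proof of Corollary~\ref{cor:imh-native-bound}.

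The paper's route is shorter because the coefficient bounds are already in hand. Both routes give constants of the form $\mathcal A_Q^\Int$ times $(Q/R)^{r_\Int Q}$, up to harmless factors, where $R$ is the analyticity radius in $\delta$.
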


\begin{proof}
For (i), substituting the inverse-modified expansion into \eqref{eq:richardson-target} gives
\begin{align*}
\check H_{\delta}^{\Int,(Q)}-H =\sum_{s=1}^Q \alpha_s^\Int \bigl(\check H_{s\delta}^\Int-H\bigr)=\sum_{j\ge1}\left(\sum_{s=1}^Q\alpha_s^\Int s^{r_\Int j}\right)\delta^{r_\Int j}\check H_{r_\Int j}^\Int.
\end{align*}
By \eqref{eq:vandermonde}, the inner sum vanishes for
$j=1,\ldots,Q-1$, which proves
\eqref{eq:richardson-target-expansion}.

For (ii), the native-space coefficient bounds established in Appendix~\ref{sec:bae} imply that there exist $M_\Int,R_\Int>0$ such that $\|\check H_{r_\Int j}^\Int\|_{\HK} \le M_\Int R_\Int^{-r_\Int j}, $ for all $ j\ge1.$
Moreover, $\left|\sum_{s=1}^Q\alpha_s^\Int s^{r_\Int j}\right|\le\mathcal A_Q^\Int Q^{r_\Int j}.$
Hence
\[
\left\|\check H_{\delta}^{\Int,(Q)}-H\right\|_{\HK}\le\mathcal A_Q^\Int M_\Int\sum_{j\ge Q}\left(\frac{Q\delta}{R_\Int}\right)^{r_\Int j}.
\]
For $\delta>0$ sufficiently small, the series is geometric, and therefore the result follows.
\end{proof}

We now return to the learned estimator. Combining \eqref{eq:richardson-estimator} and \eqref{eq:richardson-target} gives the exact decomposition
\begin{equation}\label{eq:richardson-error-decomposition}
\widehat h_{\delta}^{\Int,(Q)}-H=\sum_{s=1}^Q\alpha_s^\Int\left(\widehat h_{s,\delta}^\Int-\check H_{s\delta}^\Int\right)+\left(\check H_{\delta}^{\Int,(Q)}-H\right).
\end{equation}
The first term contains the estimation and approximation errors of the
individual stride estimators, whereas the second is the remaining
integrator-induced error of the Richardson-combined target controlled by Proposition~\ref{prop:richardson-target-bias}.
Recall that the Richardson amplification factor is $\mathcal A_Q^\Int:=\sum_{s=1}^Q |\alpha_s^\Int|.$ Consequently,
\begin{equation}\label{eq:richardson-basic-error-bound}
\begin{aligned}
\left\|\widehat h_{\delta}^{\Int,(Q)}-H\right\|_{\HK}
\le\mathcal A_Q^\Int\max_{1\le s\le Q}\left\|\widehat h_{s,\delta}^\Int-\check H_{s\delta}^\Int\right\|_{\HK}+C_{Q,\Int}\delta^{r_\Int Q}.
\end{aligned}
\end{equation}

Thus Richardson extrapolation raises the order of the integrator-induced
error, at the price of amplifying the stride-wise learning errors by the
factor $\mathcal A_Q^\Int$. Although the stride estimators are statistically dependent because they reuse the same observed trajectories, the deterministic decomposition \eqref{eq:richardson-error-decomposition} and the bound \eqref{eq:richardson-basic-error-bound} do not require independence across strides. It remains to control the first term using the single-stride learning estimates.

\subsection{Richardson reconstruction error}

It remains to control the single-stride reconstruction errors in \eqref{eq:richardson-basic-error-bound}. Since the stride-$s$ problem has exactly the same form as the base-step problem, with effective step size $s\delta$, the estimation and approximation bounds of Section~\ref{sec:estimation-regularization} apply separately at each stride. 

For each $s=1,\ldots,Q$, define the stride-$s$ population
regularized minimizer $h_{s,\eta}^{*,\Int}:=(B_s^\Int+\eta I)^{-1}B_s^\Int\check H_{s\delta}^\Int.$
Set $M_{Q,\delta}^\Int := \max_{1\le s\le Q}\|\check H_{s\delta}^\Int\|_{\HK}$ and for a common exponent $\gamma\in(0,1]$, define
\begin{equation*}
S_{Q,\delta}^\Int:=\max_{1\le s\le Q}\inf\left\{\|\psi\|_{\HK}\mid\psi\in\HK,\quad \check H_{s\delta}^\Int=(B_s^\Int)^\gamma\psi
\right\}.
\end{equation*}

\begin{theorem}[Richardson reconstruction error]
\label{thm:rich-decomp}
Let $\Int\in\{\MP,\SE\}$ and $Q\ge2$. Suppose that Assumptions~\ref{ass:analytic-setting} and~\ref{ass:domain} hold, and let $\delta>0$ be sufficiently small so that Proposition~\ref{prop:richardson-target-bias} applies.
Assume that the source condition in Assumption~\ref{ass:source} holds at each stride with a common exponent $\gamma\in(0,1]$; that is, for each $s=1,\ldots,Q$, there exists $\psi_{s,\delta}^\Int\in\HK$ such that
\begin{equation*}
\check H_{s\delta}^\Int=(B_s^\Int)^\gamma\psi_{s,\delta}^\Int.
\end{equation*}
Then under the stride-balanced regularization \eqref{eq:stride-balanced-regularization}, for every $0<\varepsilon<1$, with probability at least $1-\varepsilon$,
\begin{equation}
\label{eq:richardson-reconstruction-bound}
\left\|\widehat h_{\delta}^{\Int,(Q)}-H\right\|_{\HK}\le C\mathcal A_Q^\Int\left(\frac{2d\kappa^2M_{Q,\delta}^\Int}{\eta}\sqrt{\frac{\log(4Q/\varepsilon)}{N}}+\eta^\gamma S_{Q,\delta}^\Int\right)+C_{Q,\Int}\delta^{r_\Int Q}.
\end{equation}
\end{theorem}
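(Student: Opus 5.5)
The plan is to start from the deterministic bound \eqref{eq:richardson-basic-error-bound}. It already isolates the Richardson target bias $C_{Q,\Int}\delta^{r_\Int Q}$, via Proposition~\ref{prop:richardson-target-bias}(ii), from the amplified stride-wise learning errors $\mathcal A_Q^\Int\max_s\|\widehat h_{s,\delta}^\Int-\check H_{s\delta}^\Int\|_{\HK}$. That bound requires no independence across strides, so it remains only to bound each stride error separately with high probability and then combine the strides by a union bound.

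For fixed $s$, I would split as in \eqref{eq:error-decomposition}:
\[
\widehat h_{s,\delta}^\Int-\check H_{s\delta}^\Int=\bigl(\widehat h_{s,\delta}^\Int-h_{s,\eta}^{*,\Int}\bigr)+\bigl(h_{s,\eta}^{*,\Int}-\check H_{s\delta}^\Int\bigr).
\]
By \eqref{eq:stride-exact-target}, the stride-$s$ problem is the base-step problem with $\delta$ replaced by $s\delta$, $L$ replaced by $L_s$, and $\rho^\Int$ replaced by $\rho_s^\Int$. Under the stride-balanced choice \eqref{eq:stride-balanced-regularization}, its effective regularization is exactly $\eta$. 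The normal-equation argument of Proposition~\ref{prop:operator-representations} therefore gives
\[
\widehat h_{s,\delta}^\Int=(B_{s,NL_s}^\Int+\eta I)^{-1}B_{s,NL_s}^\Int\check H_{s\delta}^\Int .
\]
The trajectory-block concentration of Lemma~\ref{lem:trajectory-concentration} also carries over. The blocks $\xi^{(n)}_s(h)=L_s^{-1}\sum_{l=s}^L\sum_i\partial_ih(x_{s,l}^{\Int,(n)})(\nabla_1K)_i(x_{s,l}^{\Int,(n)},\cdot)$ are i.i.d. across $n$, have mean $B_s^\Int h$ by the definition of $\rho_s^\Int$, and are bounded by $2d\kappa^2\|h\|_{\HK}$, because $\operatorname{supp}\rho_s^\Int\subset\Omega$. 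Repeating the resolvent argument of Proposition~\ref{thm:estimation} then gives, with probability at least $1-\varepsilon/Q$,
\[
\|\widehat h_{s,\delta}^\Int-h_{s,\eta}^{*,\Int}\|_{\HK}\le C\frac{2d\kappa^2}{\eta}\sqrt{\frac{\log(4Q/\varepsilon)}{N}}\,\|\check H_{s\delta}^\Int\|_{\HK}\le C\frac{2d\kappa^2M_{Q,\delta}^\Int}{\eta}\sqrt{\frac{\log(4Q/\varepsilon)}{N}}.
\]
The approximation term is deterministic. As in \eqref{eq:regularization-bound}, spectral calculus for $B_s^\Int$ gives $\|h_{s,\eta}^{*,\Int}-\check H_{s\delta}^\Int\|_{\HK}\le\eta^\gamma\|\psi\|_{\HK}$ for every admissible source element $\psi$. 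Taking the infimum over $\psi$ and then the maximum over $s$ bounds this term by $\eta^\gamma S_{Q,\delta}^\Int$.

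A union bound over $s=1,\ldots,Q$ makes all $Q$ estimation bounds hold simultaneously with probability at least $1-\varepsilon$. Then $\max_s\|\widehat h_{s,\delta}^\Int-\check H_{s\delta}^\Int\|_{\HK}$ is bounded by the bracket in \eqref{eq:richardson-reconstruction-bound}, assuming the universal constant is taken at least $1$. Inserting this into \eqref{eq:richardson-basic-error-bound} gives the claim.

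The only delicate points are bookkeeping. First, the concentration step must be checked to be insensitive to $L_s$ and $s$, which holds because the block normalization by $1/L_s$ keeps the bound at $2d\kappa^2$. Second, the failure probability per stride should be $\varepsilon/Q$, which accounts for the $\log(4Q/\varepsilon)$ factor. Third, the stride estimators are dependent through the shared trajectories, but this is harmless: the union bound needs no independence, and \eqref{eq:richardson-basic-error-bound} is deterministic.
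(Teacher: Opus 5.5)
Your proposal is correct and follows essentially the same route as the paper. Both proofs apply the single-stride estimation bound with failure probability $\varepsilon/Q$ and a union bound, use the spectral source-condition bound with an infimum over source elements and a maximum over strides, and substitute into the deterministic bound \eqref{eq:richardson-basic-error-bound}. Your additional checks fill in bookkeeping the paper leaves implicit: the $1/L_s$ block normalization and mean $B_s^\Int h$ in the concentration step, and taking $C\ge1$ so the constant can be pulled outside the bracket.
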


\begin{proof}
For each $s=1,\ldots,Q$, the stride-$s$ regression problem has the same form as the base-step problem, with step size $s\delta$, population covariance operator $B_s^\Int$, and inverse target $\check H_{s\delta}^\Int$. Since $\lambda_s=(s\delta)^2\eta$, the effective regularization parameter is the same value $\eta$ at every stride.

Applying Proposition~\ref{thm:estimation} to the stride-$s$ problem with failure probability $\varepsilon/Q$ gives
\[
\left\|\widehat h_{s,\delta}^\Int-h_{s,\eta}^{*,\Int}\right\|_{\HK}\le C\frac{2d\kappa^2}{\eta}\sqrt{\frac{\log(4Q/\varepsilon)}{N}}\left\|\check H_{s\delta}^\Int\right\|_{\HK}.
\]
A union bound therefore implies that these estimates hold simultaneously for all $s=1,\ldots,Q$ with probability at least $1-\varepsilon$. 

By the stride source condition and the spectral argument used in \eqref{eq:regularization-bound}, for every $\psi\in\HK$ satisfying $\check H_{s\delta}^\Int=(B_s^\Int)^\gamma\psi,$ we have $\left\|h_{s,\eta}^{*,\Int}-\check H_{s\delta}^\Int\right\|_{\HK}\le \eta^\gamma\|\psi\|_{\HK}.$
Taking the infimum over all such $\psi$ yields
\[
\left\|h_{s,\eta}^{*,\Int}-\check H_{s\delta}^\Int\right\|_{\HK}\le\eta^\gamma\inf\left\{\|\psi\|_{\HK}:\check H_{s\delta}^\Int=(B_s^\Int)^\gamma\psi\right\}\le \eta^\gamma S_{Q,\delta}^\Int.
\]
Hence, on the same event,
\begin{align*}
\left\|\widehat h_{s,\delta}^\Int-\check H_{s\delta}^\Int\right\|_{\HK}
&\le\left\|\widehat h_{s,\delta}^\Int-h_{s,\eta}^{*,\Int}\right\|_{\HK}+\left\|h_{s,\eta}^{*,\Int}-\check H_{s\delta}^\Int\right\|_{\HK}\\
&\le C\frac{2d\kappa^2}{\eta}\sqrt{\frac{\log(4Q/\varepsilon)}{N}}\left\|\check H_{s\delta}^\Int\right\|_{\HK}+\eta^\gamma S_{Q,\delta}^\Int.
\end{align*}
Taking the maximum over $s=1,\ldots,Q$ yields
\[
\max_{1\le s\le Q}\left\|\widehat h_{s,\delta}^\Int-\check H_{s\delta}^\Int\right\|_{\HK}\le C\frac{2d\kappa^2}{\eta}\sqrt{\frac{\log(4Q/\varepsilon)}{N}}M_{Q,\delta}^\Int+\eta^\gamma S_{Q,\delta}^\Int.
\]
Substituting this estimate into \eqref{eq:richardson-basic-error-bound} proves \eqref{eq:richardson-reconstruction-bound}.
\end{proof}

\begin{remark}[Integrator-error reduction versus statistical amplification]
Theorem~\ref{thm:rich-decomp} makes explicit the trade-off introduced by
Richardson extrapolation. Increasing $Q$ reduces the integrator-induced error
from $\mathcal O(\delta^{r_\Int})$ to
$\mathcal O(\delta^{r_\Int Q})$, but does not improve the statistical
reconstruction rate. Instead, the estimation and approximation errors of the
individual stride estimators are amplified by at most the Richardson factor
$\mathcal A_Q^\Int$.

Thus Richardson extrapolation is most beneficial when the integrator-induced
error is comparable to or larger than the learning error. Once the estimation
and approximation errors dominate, increasing the extrapolation level provides
no corresponding improvement and may become counterproductive because of the
growth of $\mathcal A_Q^\Int$.
\end{remark}

\begin{corollary}[Balanced joint convergence with Richardson extrapolation]
Let $\Int\in\{\MP,\SE\}$ and $Q\ge2$, and suppose  that the hypotheses of Theorem~\ref{thm:rich-decomp} hold for all sufficiently small $\delta$, with the same exponent $\gamma\in(0,1]$. 
Assume further that the source norms remain uniformly bounded as $\delta\to0$, namely, 
\begin{equation}\label{eq:uniform-richardson-bounds}
\sup_{0<\delta\le\delta_0}
S_{Q,\delta}^\Int<\infty.
\end{equation}
Let $\eta_N\asymp N^{-1/(2\gamma+2)}, $ $ \delta_N \asymp N^{-\gamma/[r_\Int Q(2\gamma+2)]},$
and choose the stride-balanced regularization $\lambda_{s,N}:=(s\delta_N)^2\eta_N,$ for $s=1,\ldots,Q.$
Then
\begin{equation}\label{eq:balanced-richardson-rate}
\left\|
\widehat h_{\delta_N}^{\Int,(Q)}-H
\right\|_{\HK}
=
\mathcal O_{\mathbb P}
\left(
\mathcal A_Q^\Int
N^{-\gamma/(2\gamma+2)}
\right).
\end{equation}
\end{corollary}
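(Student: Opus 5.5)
The argument follows the proof of Corollary~\ref{cor:balanced-joint-rate}, with Theorem~\ref{thm:rich-decomp} used in place of Theorem~\ref{thm:global-error}. I will show that, as $\delta\to0$, the three quantities $M_{Q,\delta}^\Int$, $S_{Q,\delta}^\Int$ and $C_{Q,\Int}$ in \eqref{eq:richardson-reconstruction-bound} stay bounded. Then I will insert the prescribed scalings of $\eta_N$ and $\delta_N$.

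\textbf{Step 1: uniform constants.} For $M_{Q,\delta}^\Int$, apply \eqref{eq:generic-native-bias} with step $s\delta$ for each $s\le Q$. Once $Q\delta$ is below the admissibility threshold, this gives
\[
\|\check H_{s\delta}^\Int\|_{\HK}\le\|H\|_{\HK}+C_\Int(Q\delta)^{r_\Int}.
\]
Hence $\sup_{0<\delta\le\delta_0}M_{Q,\delta}^\Int<\infty$, possibly after shrinking $\delta_0$. The source norms $S_{Q,\delta}^\Int$ are uniformly bounded by assumption \eqref{eq:uniform-richardson-bounds}. The constant $C_{Q,\Int}$ from Proposition~\ref{prop:richardson-target-bias}(ii) does not depend on $\delta$. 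Since $\delta_N\to0$, we have $\delta_N<\min(\delta_0,\delta_{Q,\Int})$ for all large $N$, and for such $N$ the hypotheses of Theorem~\ref{thm:rich-decomp} hold at $\delta=\delta_N$.

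\textbf{Step 2: insert the rates.} Fix $\varepsilon\in(0,1)$. Theorem~\ref{thm:rich-decomp} then gives, with probability at least $1-\varepsilon$,
\[
\|\widehat h_{\delta_N}^{\Int,(Q)}-H\|_{\HK}\le C_\varepsilon\,\mathcal A_Q^\Int\bigl(\eta_N^{-1}N^{-1/2}+\eta_N^\gamma\bigr)+C_{Q,\Int}\delta_N^{r_\Int Q}.
\]
Here $C_\varepsilon$ absorbs $\sqrt{\log(4Q/\varepsilon)}$ and the uniform bounds from Step 1. With $\eta_N\asymp N^{-1/(2\gamma+2)}$, both $\eta_N^{-1}N^{-1/2}$ and $\eta_N^\gamma$ are of order $N^{-\gamma/(2\gamma+2)}$. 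With $\delta_N\asymp N^{-\gamma/[r_\Int Q(2\gamma+2)]}$, we get $\delta_N^{r_\Int Q}\asymp N^{-\gamma/(2\gamma+2)}$. Since $\mathcal A_Q^\Int\ge1$ (the coefficients sum to $1$), the last term is also bounded by a constant times $\mathcal A_Q^\Int N^{-\gamma/(2\gamma+2)}$. Because $\varepsilon$ was arbitrary, this is exactly the statement \eqref{eq:balanced-richardson-rate} in $\mathcal O_{\mathbb P}$ form.

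\textbf{Main obstacle.} The only delicate point is uniformity in Step 1. We need the admissibility window $Q\delta<\delta_*^\Int$, the support condition $\operatorname{supp}\rho_s^\Int\subset\Omega$, and the source condition to hold for every $s$ along the whole sequence $\delta_N$. The corollary's hypothesis (``for all sufficiently small $\delta$'') provides these for large $N$. The finitely many small $N$ do not affect an $\mathcal O_{\mathbb P}$ statement. The remaining work is checking that the constant in \eqref{eq:generic-native-bias} is uniform over strides up to $Q$, which follows from the appendix coefficient bounds used in Proposition~\ref{prop:richardson-target-bias}.
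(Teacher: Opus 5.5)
Your proposal is correct and follows essentially the same route as the paper: it bounds $M_{Q,\delta}^\Int$ uniformly via the stride-wise native-space bias estimate, invokes the uniform source bound, and substitutes the scalings of $\eta_N$ and $\delta_N$ into Theorem~\ref{thm:rich-decomp}. Your explicit observation that $\mathcal A_Q^\Int\ge1$ (because the coefficients sum to one) lets the $\delta_N^{r_\Int Q}$ term be absorbed into $\mathcal A_Q^\Int N^{-\gamma/(2\gamma+2)}$, a step the paper leaves implicit.
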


\begin{proof}
By the native-space target-bias estimate in Appendix \ref{sec:bae}, for $s=1,\ldots,Q$,
\[
\|\check H_{s\delta}^\Int\|_{\HK}\le\|H\|_{\HK}+C_\Int(s\delta)^{r_\Int}\le\|H\|_{\HK}+ C_\Int Q^{r_\Int}\delta^{r_\Int}.
\]
Hence $M_{Q,\delta}^\Int$ is uniformly bounded for sufficiently small $\delta$.
Therefore, Theorem~\ref{thm:rich-decomp} and \eqref{eq:uniform-richardson-bounds} yield
\[
\left\|\widehat h_{\delta}^{\Int,(Q)}-H\right\|_{\HK}=\mathcal O_{\mathbb P}\left(\mathcal A_Q^\Int\left(\eta^{-1}N^{-1/2}+\eta^\gamma\right)+\delta^{r_\Int Q}\right).
\]
Balancing the estimation and approximation contributions gives $\eta_N\asymp N^{-1/(2\gamma+2)},$ and $\eta_N^\gamma\asymp N^{-\gamma/(2\gamma+2)}.$
Choosing $\delta_N$ so that $\delta_N^{r_\Int Q}\asymp N^{-\gamma/(2\gamma+2)}$ yields $\delta_N \asymp N^{-\gamma/[r_\Int Q(2\gamma+2)]}.$
Substituting these choices into the preceding bound proves \eqref{eq:balanced-richardson-rate}.
\end{proof}

Compared with Corollary~\ref{cor:balanced-joint-rate}, Richardson extrapolation preserves the same statistical reconstruction rate while allowing a slower refinement of the observation step. More precisely, the required step-size scaling changes from
\[
\delta_N \asymp N^{-\gamma/[r_\Int(2\gamma+2)]} \quad \text{to}\quad \delta_N \asymp N^{-\gamma/[r_\Int Q(2\gamma+2)]}.
\]
Thus higher-level extrapolation reduces the temporal-resolution
requirement needed to retain the same overall reconstruction rate.

\section{Numerical experiments}
\label{sec:experiments}

In this section, we numerically investigate the reconstruction properties predicted by the preceding analysis. We first use the H\'enon--Heiles system as a controlled benchmark to examine the three main components of the reconstruction error: the estimation error, the approximation error, and the integrator-induced error. On the same benchmark, we then test multistride Richardson correction, its interaction with observational noise, and the effect of trajectory dependence under a fixed observation budget. We finally assess the reconstruction capability of the implicit midpoint estimator on three additional Hamiltonian systems: the double pendulum, the Frenkel--Kontorova model, and a highly non-convex Hamiltonian.

\subsection{Experimental setting}

Throughout the experiments, we use the Mat\'ern-$5/2$ kernel
\[
K(x,y)=\left(1+\frac{\sqrt{5}r}{\ell}+\frac{5r^2}{3\ell^2}\right)\exp\left(-\frac{\sqrt{5}r}{\ell}\right),\qquad r=\|x-y\|,
\]
where $\ell>0$ denotes the kernel length scale. The Mat\'ern-$5/2$ kernel satisfies the smoothness requirements of Assumption~\ref{ass:domain} as stated in Example \ref{Matern kernels}.

Given exact-flow endpoint pairs $(z_i^-,z_i^+)$ separated by a time step $\delta$, the implicit midpoint and symplectic Euler observations are constructed as
\[
x_i^{\MP}=\frac{z_i^-+z_i^+}{2},\qquad x_i^{\SE}=(q_i^-,p_i^+),\qquad v_i=\frac{z_i^+-z_i^-}{\delta}.
\]
We use the closed-form estimator derived in Section~\ref{sec:learning}, with effective regularization parameter $\eta=\lambda/\delta^2$. We consider $N$ independent trajectories with $L$ observations per trajectory, so that the total number of observations is $M=NL$. We generate reference trajectories with DOP853 using $\texttt{rtol}=10^{-12}$ and $\texttt{atol}=10^{-14}$, and construct the midpoint and symplectic Euler observations from the resulting endpoint pairs.

The main theoretical experiments are performed on the H\'enon--Heiles Hamiltonian
\[
H(q,p)=\frac12(p_1^2+p_2^2+q_1^2+q_2^2)+q_1^2q_2-\frac13q_2^3.
\]
We sample the training initial conditions uniformly from a bounded box and retain those with energy below $E_{\max}=0.14$, which lies below the escape threshold $1/6$. We evaluate the reconstruction error on the two-dimensional slice with fixed momentum $p=(0.15,-0.10)$. 

Since the Hamiltonian is identifiable only up to an additive constant, we first align the reconstruction by $c^\star=\frac{1}{N_{\rm test}}\sum_{j=1}^{N_{\rm test}}\left(H(z_j^{\rm test})-\widehat H(z_j^{\rm test})\right),$
and report the relative error
\[
S_{\operatorname{re}}(\widehat H,H)
=
\frac{
\left(\sum_j |\widehat H(z_j^{\rm test})+c^\star-H(z_j^{\rm test})|^2\right)^{1/2}
}{
\left(\sum_j |H(z_j^{\rm test})|^2\right)^{1/2}
}.
\]

\subsection{Fixed-step recovery and integrator-induced error}

We first examine how the reconstruction error varies with the time step. This experiment isolates the scheme-dependent integrator-induced error predicted by the inverse-target analysis. We take $N=1000$, $L=1$, $\eta=10^{-9}$ and $\ell=3$, and vary
$
\delta\in\{0.08,0.10,0.12,0.15,0.18,0.22,0.26,0.30\}$.
For each $\delta$, we evolve the same family of independently sampled initial conditions under the exact Hamiltonian flow, and construct midpoint and symplectic Euler observations from the resulting endpoint pairs.

Figure~\ref{fig:HH-error-decomposition} (a) shows the relative reconstruction error as a function of $\delta$. The implicit midpoint estimator exhibits the expected second-order behavior, while the symplectic Euler estimator exhibits first-order behavior. Numerically, the fitted slopes are approximately $2.03$ and $1.01$, respectively. This agrees with the inverse-target expansions
\[
\check H_\delta^{\MP}=H+\delta^2 \check H_2^{\MP}+O(\delta^4),\qquad
\check H_\delta^{\SE}=H+\delta \check H_1^{\SE}+O(\delta^2),
\]
and confirms that, once the estimation and approximation errors are sufficiently small, the reconstruction error is controlled by the integrator-induced error.

\begin{figure}[t]
    \centering
    \begin{minipage}[t]{0.48\textwidth}
        \centering
        \includegraphics[width=\textwidth]{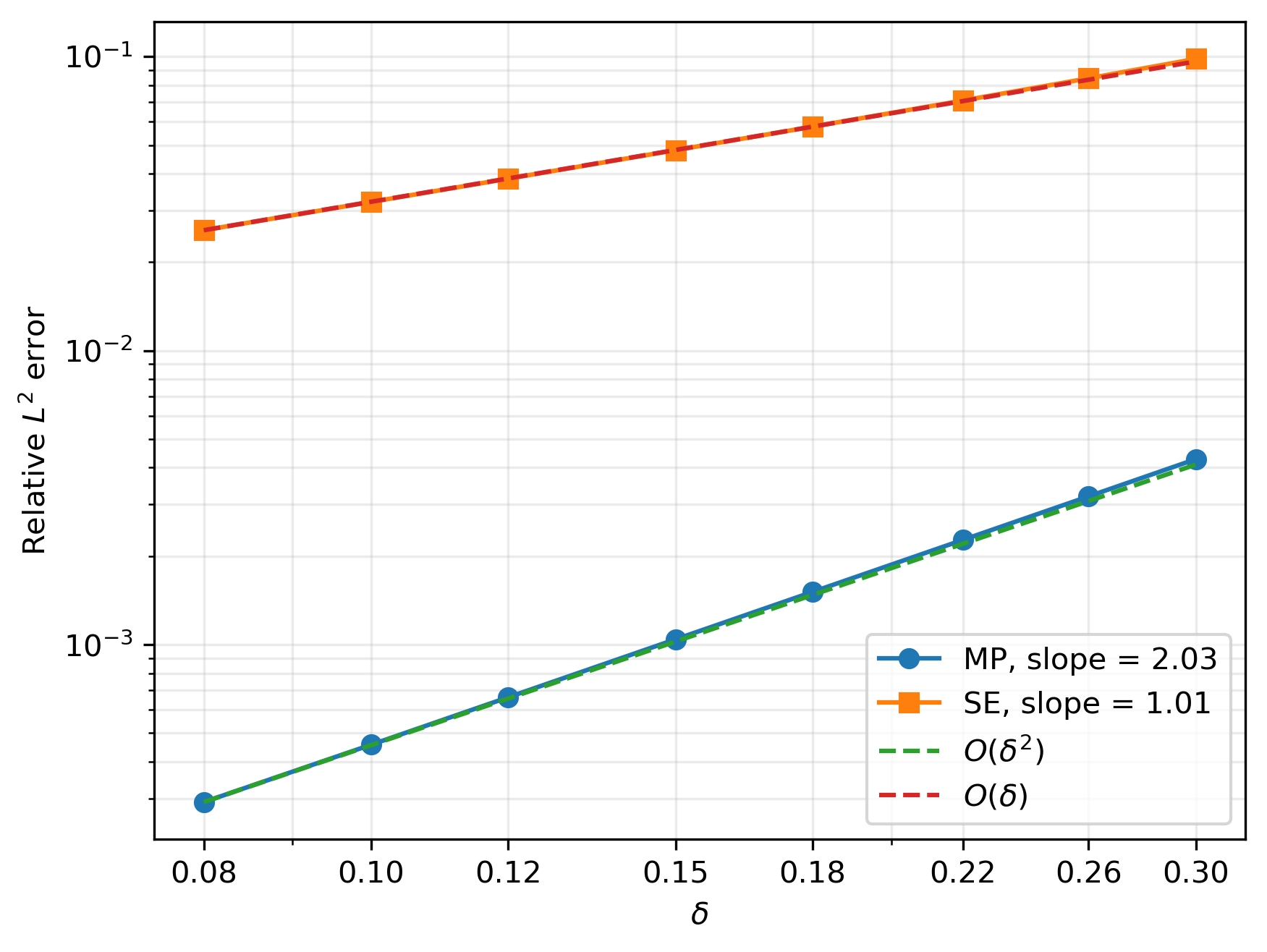}

        \small (a) Fixed $N$, varying $\delta$.
    \end{minipage}
    \hfill
    \begin{minipage}[t]{0.48\textwidth}
        \centering
        \includegraphics[width=\textwidth]{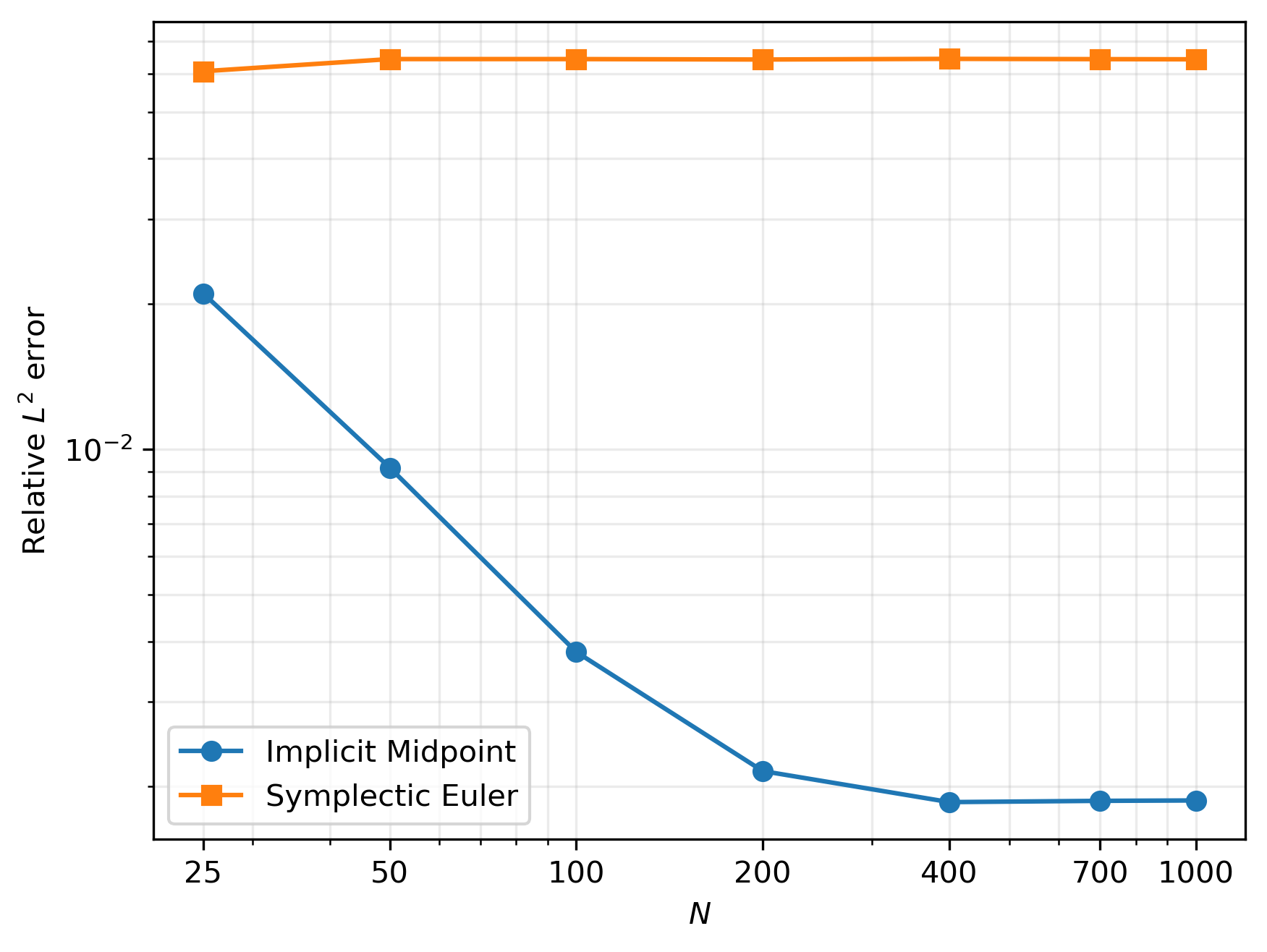}

        \small (b) Fixed $\delta$, varying $N$.
    \end{minipage}
    \caption{Error behavior for the H\'enon--Heiles system. 
    (a) The midpoint and symplectic Euler estimators exhibit the predicted second- and first-order integrator-induced errors as the time step varies. 
    (b) At fixed step size, increasing the number of independent trajectories reduces the estimation and approximation errors until the fixed-step integrator-induced error becomes dominant.}
    \label{fig:HH-error-decomposition}
\end{figure}

To separate the statistical component from the fixed-step error, we next fix $\delta=0.2, L=1$ and vary the number of independent trajectories. We use
$N\in\{25,50,100,200,400,700,1000\}$
and decrease the effective regularization parameter according to
\[
\eta_N=\eta_{\rm ref}\left(\frac{N_{\rm ref}}{N}\right)^{1/4},\qquad
\eta_{\rm ref}=10^{-9},\qquad N_{\rm ref}=1000.
\]
This choice allows the approximation contribution associated with the regularized estimator to decrease with the sample size rather than remain artificially fixed.

As shown in Figure~\ref{fig:HH-error-decomposition} (b), the midpoint error initially decreases rapidly as $N$ increases and subsequently saturates at a nonzero level. This behavior is consistent with the theoretical decomposition
\[
S_{\operatorname{re}}(\widehat H,H)
\lesssim
\text{estimation error}
+
\text{approximation error}
+
\text{integrator-induced error}.
\]
For moderate $N$, the first two terms remain visible, whereas for large $N$ the fixed-step integrator-induced error becomes dominant. The symplectic Euler curve is nearly flat over the tested range because its larger first-order integrator-induced error already dominates at relatively small sample sizes.

We further verify that the learned Hamiltonian follows the scheme-dependent inverse target itself, rather than merely exhibiting the correct asymptotic rate. For the H\'enon--Heiles potential
\[
V(q_1,q_2)=\frac12(q_1^2+q_2^2)+q_1^2q_2-\frac13q_2^3,
\]
the leading midpoint and symplectic Euler corrections are
\[
C_{\MP}(q,p)=\frac1{24}\left[p^\top\nabla^2V(q)p+\|\nabla V(q)\|^2\right],\qquad
C_{\SE}(q,p)=\frac12p^\top\nabla V(q).
\]
Hence the leading inverse-target errors are $\delta^2C_{\MP}$ and $\delta C_{\SE}$, respectively. Figure~\ref{fig:HH-target} compares these quantities directly with the learned errors $\widehat H_\delta-H$, after removing the irrelevant additive constant. The close agreement of the level sets numerically confirms that the estimator tracks the scheme-dependent inverse Hamiltonian.

\begin{figure}[t]
    \centering
    \includegraphics[width=0.82\textwidth]{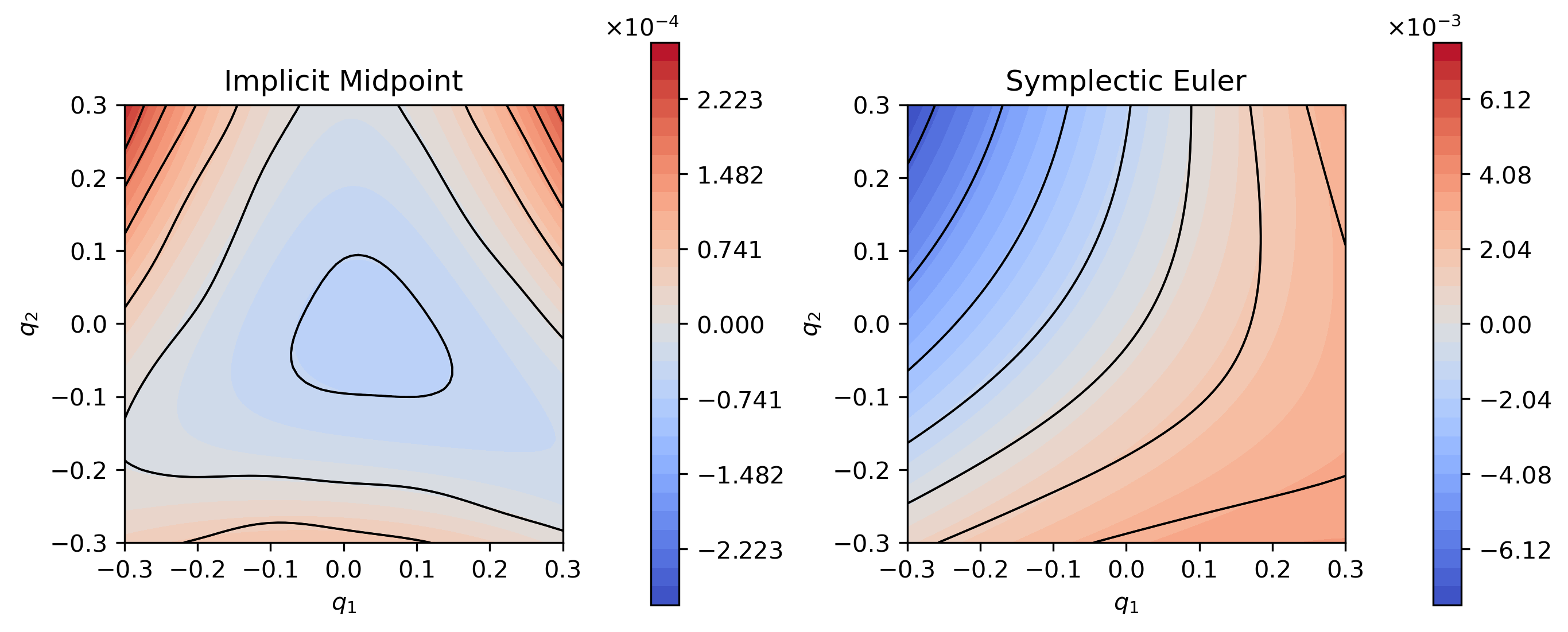}
    \caption{Leading inverse-target error and learned Hamiltonian error for the H\'enon--Heiles system. The colored fields represent the leading-order corrections $\delta^2C_{\MP}$ and $\delta C_{\SE}$, while the black contours represent the corresponding learned errors. The agreement of the level sets confirms that the learned Hamiltonian tracks the scheme-dependent inverse target.}
    \label{fig:HH-target}
\end{figure}

\subsection{Multistride Richardson correction and noise sensitivity}

We now use the H\'enon--Heiles system to investigate whether combining estimators constructed at several temporal strides can reduce scheme-dependent, integrator-induced error. For a common base step $\delta$, we use endpoint pairs separated by $s\delta$, with $s=1,2,3$, and keep the effective regularization parameter fixed across all strides $\eta=\frac{\lambda_s}{(s\delta)^2}$.
We take $N=1000$, $L=1$, $\eta=10^{-9}$, and $\ell=3$, and vary the base step over
$
\delta\in\{0.03,0.04,0.05,0.06,0.08,0.10,0.12,0.15,0.18,0.22,0.26,0.30\}$.
We use the same initial conditions for all strides and all values of $\delta$.

For midpoint, the $Q=2$ and $Q=3$ Richardson estimators are
\[
\widehat H_{\MP}^{(2)}
=
\frac43\widehat H_\delta^{\MP}
-\frac13\widehat H_{2\delta}^{\MP},
\qquad
\widehat H_{\MP}^{(3)}
=
\frac32\widehat H_\delta^{\MP}
-\frac35\widehat H_{2\delta}^{\MP}
+\frac1{10}\widehat H_{3\delta}^{\MP},
\]
whereas for symplectic Euler we use
\[
\widehat H_{\SE}^{(2)}
=
2\widehat H_\delta^{\SE}
-\widehat H_{2\delta}^{\SE},
\qquad
\widehat H_{\SE}^{(3)}
=
3\widehat H_\delta^{\SE}
-3\widehat H_{2\delta}^{\SE}
+\widehat H_{3\delta}^{\SE}.
\]
The corresponding integrator-induced error is formally reduced from $O(\delta^2)$ to $O(\delta^4)$ and $O(\delta^6)$ for midpoint, and from $O(\delta)$ to $O(\delta^2)$ and $O(\delta^3)$ for symplectic Euler.

Figure~\ref{fig:HH-richardson} compares the baseline estimator with the $Q=2$ and $Q=3$ corrections. For symplectic Euler, the improvement is substantial over a wide range of step sizes because the baseline first-order integrator-induced error is relatively large. For midpoint, the benefit of the corrections is most visible at moderate step sizes. At small $\delta$, the corrected midpoint estimators reach a common error floor, indicating that the estimation and approximation errors have become dominant. Hence increasing the Richardson order yields little additional improvement once the integrator-induced error is no longer the leading contribution.

\begin{figure}[t]
    \centering
    \includegraphics[width=0.90\textwidth]{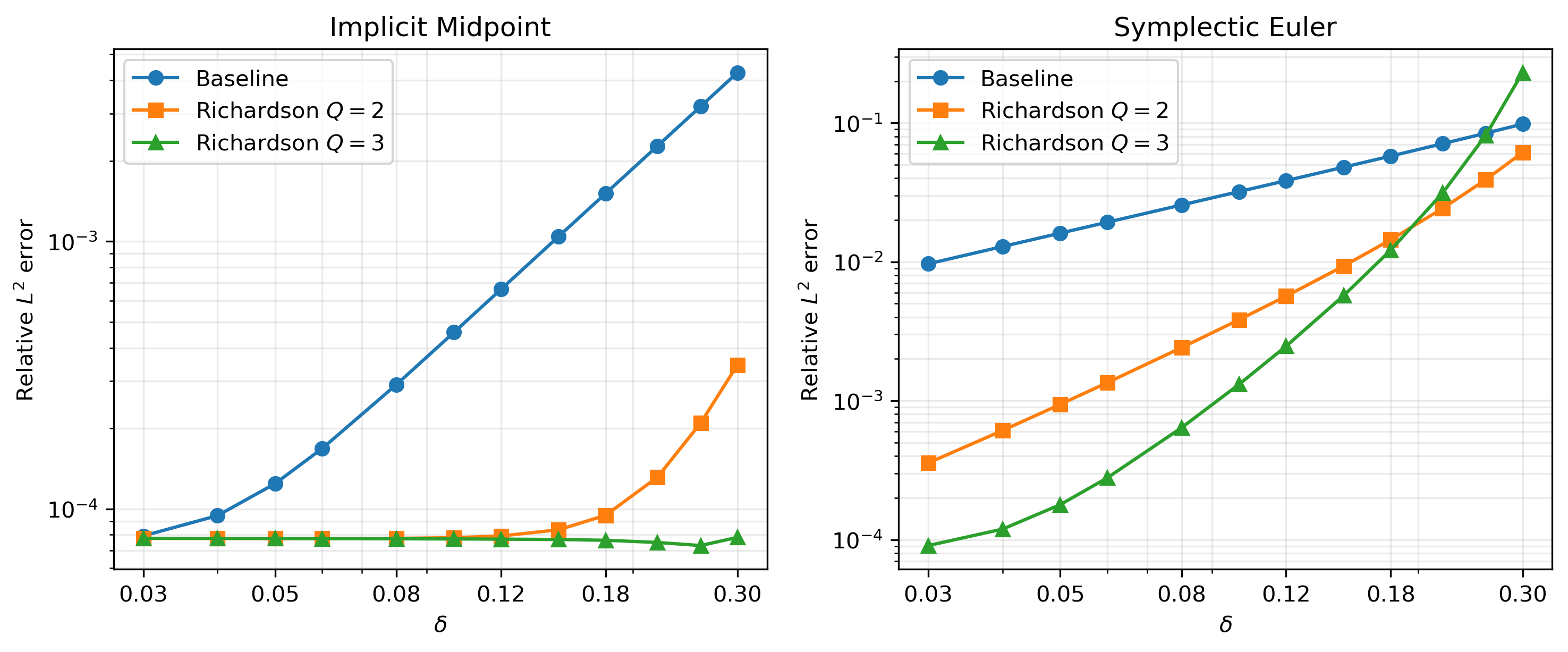}
    \caption{Multistride Richardson correction for the H\'enon--Heiles system. We compare the baseline, $Q=2$, and $Q=3$ estimators as the base step $\delta$ varies. Higher-order correction substantially reduces the symplectic Euler integrator-induced error, whereas the corrected midpoint estimators reach an estimation--approximation error floor at small step sizes.}
    \label{fig:HH-richardson}
\end{figure}

We next examine the interaction between Richardson correction and noisy trajectory observations. Following the noise model in Section~\ref{sec:deterministic-perturbations}, we perturb the observed trajectory states according to
$
\widetilde z_i=z_i+e_i$,
where the components of $e_i$ are independent Gaussian random variables. We parameterize the noise level relatively by setting
\[
e_i=a\,z_{\rm rms}\,\xi_i,\qquad
\xi_i\sim\mathcal N(0,I),
\]
where $z_{\rm rms}$ is the root-mean-square magnitude per state component over the reference endpoint states and $a$ is the relative state-noise level. We then construct the midpoint and symplectic Euler anchors, along with the corresponding discrete velocities, directly from the noisy endpoint states. Thus the perturbation enters both the regression inputs and outputs, as in the noisy scheme-adapted observation model of Section~\ref{sec:deterministic-perturbations}.

For this experiment, we take $N=1000$, $L=1$, $\delta=0.1$, $\eta=10^{-9}$, and $\ell=3$, and consider relative state-noise levels
$
a\in\{0,0.001,0.002,0.005,0.01,0.02,0.05,0.10\}$. 
We average the reconstruction errors over ten independent noise realizations. For a given stride, midpoint and symplectic Euler use the same realization of the perturbed endpoint states, while independent realizations are used across different strides.

Figure~\ref{fig:HH-noise} shows the results. In the low-noise regime, Richardson correction reduces the integrator-induced error, most noticeably for symplectic Euler. As state noise increases, however, higher-order corrections become progressively more sensitive to perturbed observations. This behavior is consistent with the larger Richardson coefficients at higher extrapolation orders, which increase the sensitivity of the combined estimator to perturbations in the individual reconstructions.

For midpoint, the uncorrected estimator becomes preferable already at relatively low noise levels because its baseline integrator-induced error is small. For symplectic Euler, the larger first-order integrator-induced error leaves a wider low-noise regime where Richardson correction remains beneficial, while the $Q=3$ estimator loses this advantage earlier than the $Q=2$ estimator as noise increases. The experiment therefore illustrates the trade-off between reducing integrator-induced error and amplifying errors caused by noisy trajectory observations.

\begin{figure}[t]
    \centering
    \includegraphics[width=0.90\textwidth]{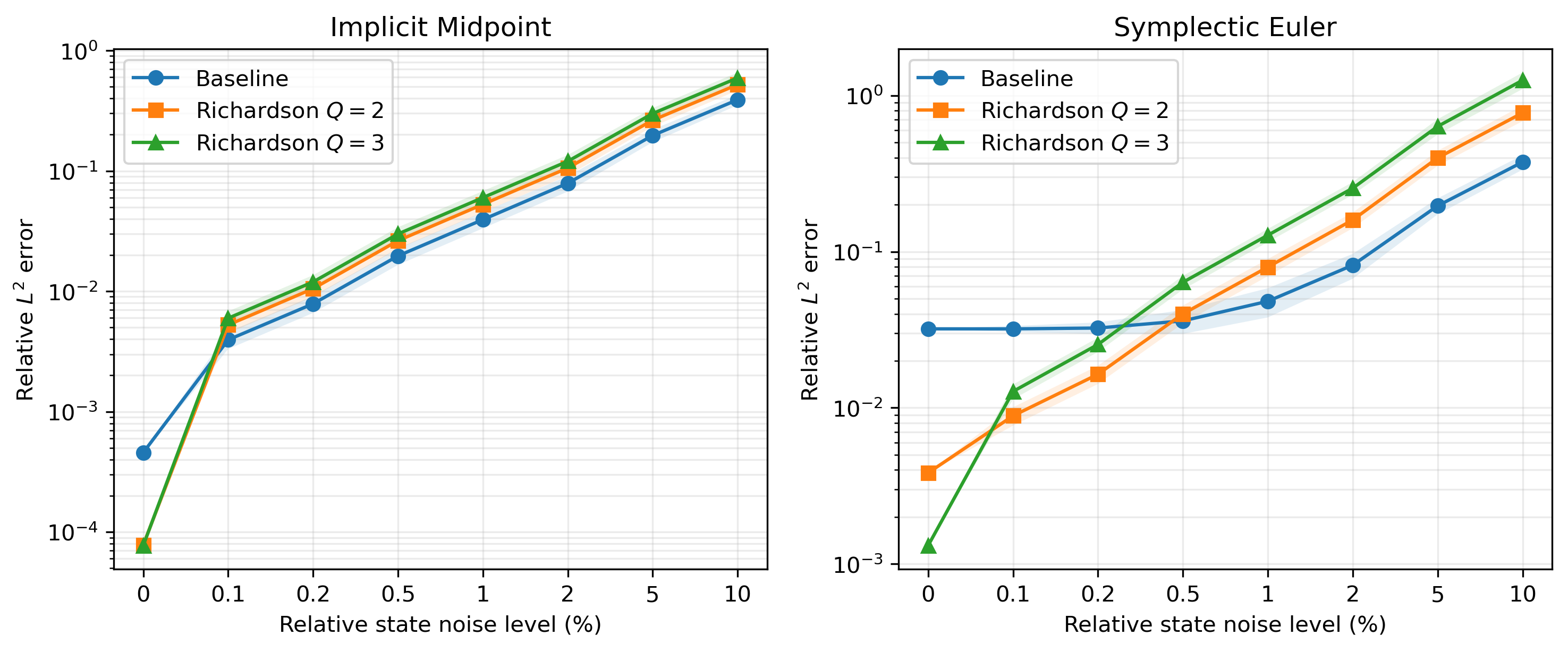}
    \caption{Effect of state observation noise on the baseline and Richardson-corrected estimators for the H\'enon--Heiles system. Shaded regions indicate one standard deviation over ten independent noise realizations. Richardson correction reduces the integrator-induced error in the low-noise regime, while higher extrapolation orders become increasingly sensitive to noisy trajectory observations.}
    \label{fig:HH-noise}
\end{figure}
\subsection{Trajectory dependence}

The previous experiments use one observation from each independent trajectory. We next investigate the distinct roles of the number $N$ of independent trajectories and the trajectory length $L$. Rather than simply increasing $L$, which also increases the total amount of data, we fix the observation budget $NL=400$ and compare $(N,L)\in\{(400,1),(200,2),(100,4),(50,8),(25,16)\}$.
For each initial condition, an exact trajectory is generated and sampled at intervals of length $\delta$, so that the $L$ observations along the same trajectory are dynamically dependent. We repeat the experiment for $\delta=0.01$, $0.05$, and $0.10$.

Figure~\ref{fig:HH-NL} shows the resulting errors. For midpoint, the reconstruction error increases systematically as $N$ decreases and $L$ increases, despite the fact that the total number of observations remains fixed. Thus, in the present regime, replacing independent trajectories by additional observations along the same trajectory does not provide an equivalent statistical benefit. The result is consistent with the interpretation that $N$ controls independent statistical averaging and global coverage of the sampling distribution, whereas increasing $L$ produces additional information along dynamically correlated orbit segments. For symplectic Euler, the allocation effect is substantially less visible because the first-order integrator-induced error dominates over the statistical variation associated with different choices of $(N,L)$.

\begin{figure}[t]
    \centering
    \includegraphics[width=\textwidth]{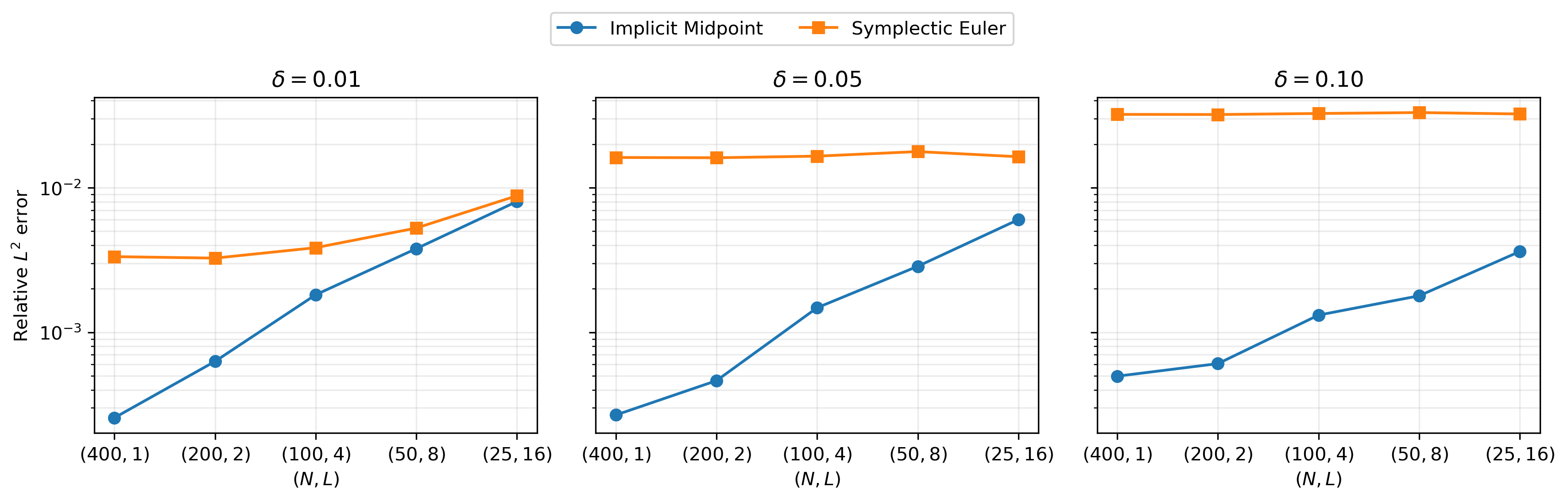}
    \caption{Effect of trajectory dependence under a fixed observation budget $NL=400$. The midpoint reconstruction benefits consistently from allocating observations across more independent trajectories. For symplectic Euler, the effect is largely masked by the dominant first-order integrator-induced error.}
    \label{fig:HH-NL}
\end{figure}

These results also emphasize that $N$ and $L$ play fundamentally different roles in the learning problem. Increasing $N$ changes the amount of independent statistical information, while increasing $L$ modifies the dynamical exploration of each trajectory and introduces stronger dependence among observations. 

\subsection{Additional Hamiltonian systems}

We finally test the trajectory-based implicit midpoint estimator on three additional Hamiltonian systems with different geometric structures. In all three experiments, we take $L=1$, so that each independent trajectory contributes a single endpoint pair. For each system, we independently sample the initial states, generate $(z_i^-,z_i^+)$ using DOP853, and construct
\[
x_i=\frac{z_i^-+z_i^+}{2},
\qquad
v_i=\frac{z_i^+-z_i^-}{\delta}.
\]
In the reconstruction plots, the learned Hamiltonian is shown without vertical alignment, while the mismatch error is computed after removing the optimal additive constant.

\paragraph{Double pendulum.}
We first consider the double mathematical pendulum with $m=l=g=1$ and Hamiltonian
\[
H(q_1,q_2,p_1,p_2)
=
\frac12
\frac{p_1^2+2p_2^2-2p_1p_2\cos(q_1-q_2)}
{1+\sin^2(q_1-q_2)}
+
4-2\cos q_1-\cos q_2.
\]
Unlike the H\'enon--Heiles system, the kinetic energy depends nonlinearly on the configuration variables, so the Hamiltonian is not of the separable form $\frac12|p|^2+V(q)$. We use $N=1000$, $\delta=0.1$, $\eta=10^{-9}$, and $\ell=3$. The initial conditions are sampled uniformly from $[-3,3]^4$, and the reconstruction is visualized on the slice $p_1=p_2=0$ over $(q_1,q_2)\in[-3,3]^2$. Figure~\ref{fig:double-pendulum} compares the true Hamiltonian, the learned Hamiltonian, and the aligned mismatch error. The reconstructed surface closely follows the nonlinear geometry of the ground truth over the displayed configuration region.

\begin{figure}[t]
    \centering
    \includegraphics[width=\textwidth]{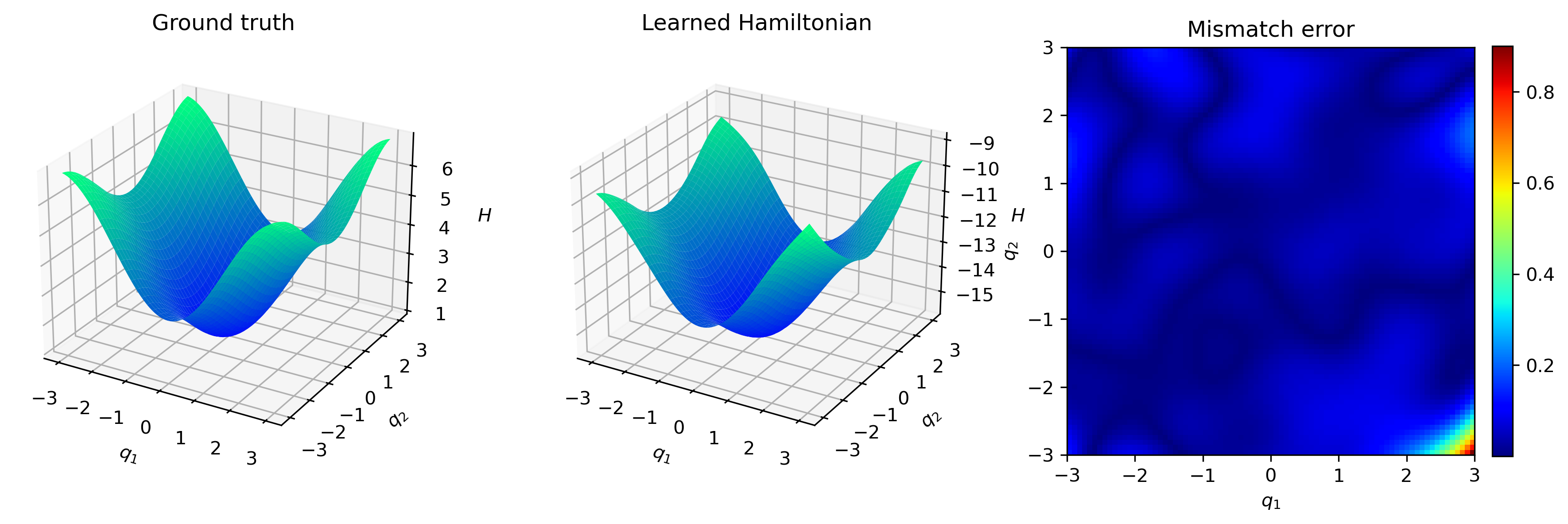}
    \caption{Double pendulum reconstruction from trajectory endpoint pairs using the implicit midpoint estimator. Left: ground-truth Hamiltonian on the slice $p_1=p_2=0$. Middle: learned Hamiltonian without vertical alignment. Right: absolute mismatch after removing the optimal additive constant.}
    \label{fig:double-pendulum}
\end{figure}

\paragraph{Frenkel--Kontorova model.}
We next consider the two-particle Frenkel--Kontorova Hamiltonian
\[
H(q_1,q_2,p_1,p_2)
=
\frac{p_1^2+p_2^2}{2}
-\cos q_1-\cos q_2
+\frac{g}{2}(q_2-q_1)^2,
\qquad g=1.
\]
This model combines a periodic on-site potential with nearest-neighbor coupling and therefore provides an energy landscape qualitatively different from the polynomial H\'enon--Heiles system. We use $N=200$, $\delta=0.1$, $\eta=10^{-9}$, and $\ell=1$. The initial conditions are sampled uniformly from $[-1,1]^4$, and the reconstruction is evaluated on the slice $p_1=p_2=0$ over $(q_1,q_2)\in[-1,1]^2$. Figure~\ref{fig:FK} shows that the learned Hamiltonian captures both the periodic structure and the deformation induced by the coupling term.

\begin{figure}[t]
    \centering
    \includegraphics[width=\textwidth]{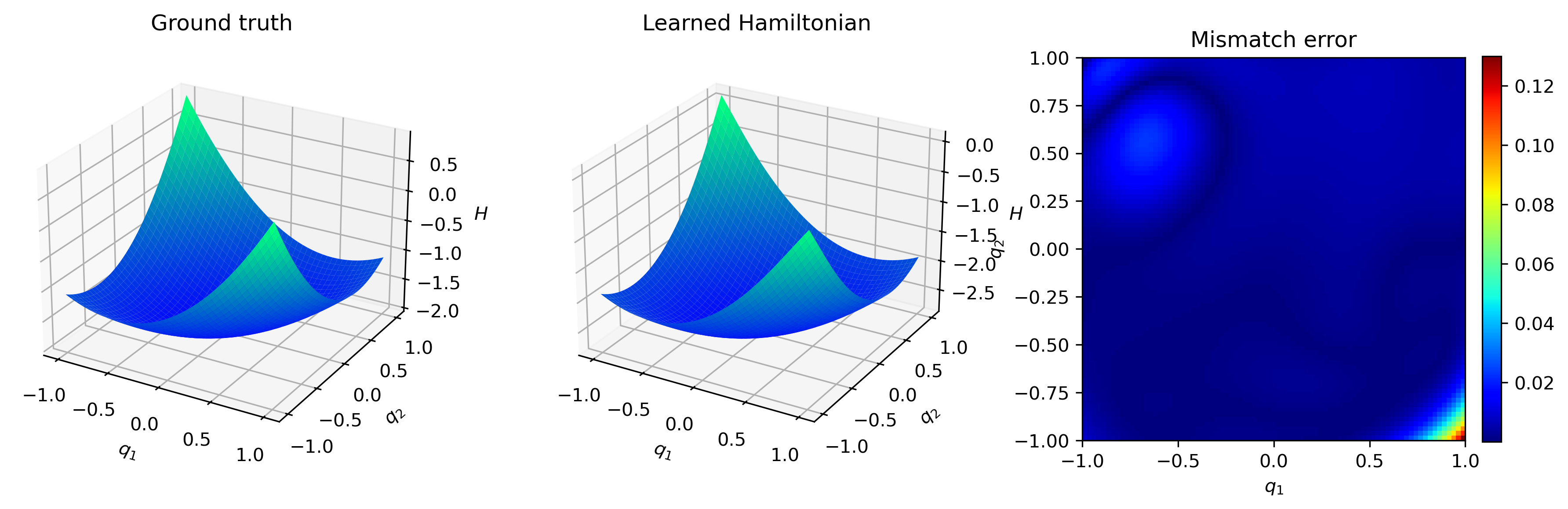}
    \caption{Frenkel--Kontorova reconstruction. Left: ground-truth Hamiltonian on the slice $p_1=p_2=0$. Middle: learned Hamiltonian without vertical alignment. Right: absolute mismatch after removing the optimal additive constant.}
    \label{fig:FK}
\end{figure}

\paragraph{Highly non-convex potential.}
Finally, we consider
\[
H(q_1,q_2,p_1,p_2)
=
\frac12(p_1^2+p_2^2)
+
\sin\left(\frac{2\pi}{3}q_1\right)
\cos\left(\frac{2\pi}{3}q_2\right)
+
\frac{\sin\left(\sqrt{q_1^2+q_2^2}\right)}
{\sqrt{q_1^2+q_2^2}},
\]
where the last term is continuously extended by the value $1$ at the origin. The corresponding potential contains several oscillatory peaks and wells and is strongly non-convex. We use $N=1500$, $\delta=0.05$, $\eta=10^{-9}$, and $\ell=1.2$. The initial conditions are sampled uniformly from $[-3,3]^4$, and the reconstruction is evaluated on the slice $p_1=p_2=0$ over $(q_1,q_2)\in[-3,3]^2$.

Figure~\ref{fig:nonconvex} shows that the learned Hamiltonian reproduces the main oscillatory and non-convex features of the true energy landscape. The raw estimator differs visibly from the ground truth by an additive vertical offset, as expected from the identifiability of the Hamiltonian. After removing this constant, the mismatch remains small over most of the interior of the domain, with the largest errors concentrated near the boundary.

\begin{figure}[t]
    \centering
    \includegraphics[width=\textwidth]{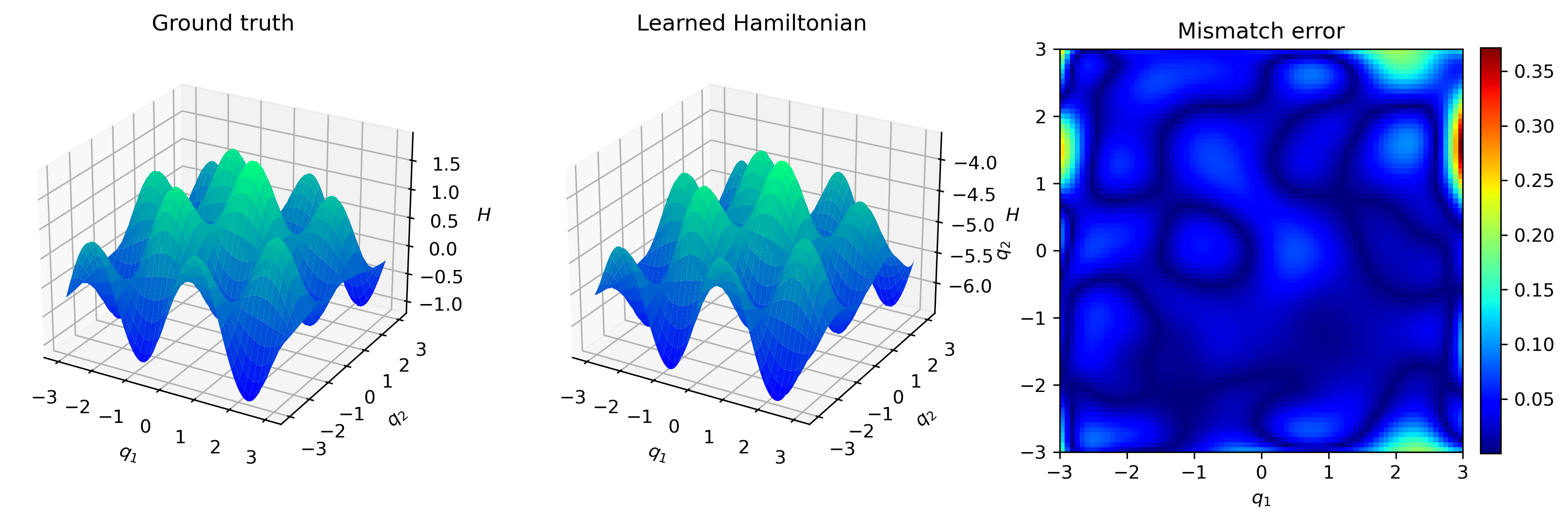}
    \caption{Reconstruction of the highly non-convex Hamiltonian. Left: ground-truth Hamiltonian on the slice $p_1=p_2=0$. Middle: learned Hamiltonian without vertical alignment. Right: absolute mismatch after removing the optimal additive constant.}
    \label{fig:nonconvex}
\end{figure}

Together with the H\'enon--Heiles experiments above, these examples show that the same trajectory-based midpoint reconstruction applies across Hamiltonian systems with nonseparable kinetic energy, periodic interactions, and strongly non-convex potential landscapes.

\section{Conclusions and future work}

We studied the reconstruction of a Hamiltonian $H:\RR^{2d}\to\RR$ from discrete trajectory data when the learning relation is defined through a symplectic one-step method $\Int\in\{\MP,\SE\}$, namely the implicit midpoint rule ($\MP$) or the $p$-implicit symplectic Euler method ($\SE$). The data consist of $N$ independent trajectories with initial conditions $z_0^{(n)}\sim\mu_0$, each contributing $L$ consecutive endpoint pairs separated by the observation step $\delta>0$. From each pair $(z^-,z^+)$ we form the scheme-adapted anchor $x^\Int$ and the discrete velocity $v_\delta=(z^+-z^-)/\delta$, and we estimate $H$ by the regularized least-squares estimator $\widehat h_{\lambda,N,L}^{\Int}$ in the RKHS $\HK$, with Tikhonov parameter $\lambda>0$ and effective regularization parameter $\eta=\lambda/\delta^2$.

Under the analytic setting of Assumption~\ref{ass:analytic-setting}, every exact-flow pair satisfies $v_\delta=J\nabla\HInt_\delta(x^\Int)$ for $|\delta|<\delta_*^\Int$, where $\HInt_\delta$ is the exact inverse modified Hamiltonian of Proposition~\ref{prop:exact-inverse-targets}. At a fixed observation step, the data therefore provide exact derivative observations of the scheme-dependent target $\HInt_\delta$, rather than of $H$. Under Assumption~\ref{ass:domain}, this discrepancy is controlled in the native-space norm: for all sufficiently small $\delta>0$,
\[
\|\HInt_\delta-H\|_{\HK}\le C_\Int\delta^{r_\Int},
\qquad r_{\MP}=2,\quad r_{\SE}=1,
\]
where $r_\Int$ is the order of the integrator-induced error and $C_\Int$ is the constant in~\eqref{eq:generic-native-bias}. The midpoint family is even in $\delta$, whereas symplectic Euler generally has a nonvanishing first-order correction. Since only $J\nabla h$ is observed, both $H$ and $\HInt_\delta$ are identifiable only up to additive constants, and reconstruction takes place on the identifiable subspace $(\mathcal H_{\mathrm{null}}^\Int)^\perp$.

The reconstruction error separates, as in~\eqref{eq:error-decomposition}, into estimation, approximation, and integrator-induced contributions. Let $\gamma\in(0,1]$ and $\psi_\delta^\Int\in\HK$ be the exponent and source element in the source condition $\HInt_\delta=(B^\Int)^\gamma\psi_\delta^\Int$, where $B^\Int$ is the population covariance operator associated with the averaged anchor law $\rho^\Int$, and let $\kappa^2=\|K\|_{C_b^2(\Omega\times\Omega)}$. Theorem~\ref{thm:global-error} gives, for every $0<\varepsilon<1$, with probability at least $1-\varepsilon$,
\[
\|\widehat h_{\lambda,N,L}^{\Int}-H\|_{\HK}
\le
C\frac{2d\kappa^2}{\eta}\sqrt{\frac{\log(4/\varepsilon)}{N}}\,\|\HInt_\delta\|_{\HK}
+\eta^\gamma\|\psi_\delta^\Int\|_{\HK}
+C_\Int\delta^{r_\Int},
\]
with $C>0$ universal. The trajectory-block concentration of Lemma~\ref{lem:trajectory-concentration} uses independence only across the $N$ trajectories and allows arbitrary dependence among the $L$ anchors within each trajectory; accordingly, the bound depends on $N$ but not on $L$. The first two terms are governed by the statistical parameters $(N,\eta)$, whereas the third depends only on $(\Int,\delta)$. At fixed $\delta$, additional data and weaker regularization drive the estimator toward $\HInt_\delta$, not toward $H$, so the integrator-induced error is not removed by increasing $N$.

Under a common source exponent $\gamma$ and uniformly bounded minimal source norms~\eqref{eq:uniform-source-norm}, the choices $\eta_N\asymp N^{-1/(2\gamma+2)}$, $\delta_N\asymp N^{-\gamma/[r_\Int(2\gamma+2)]}$, and $\lambda_N=\eta_N\delta_N^2$ balance the three contributions and yield $\|\widehat h_{\lambda_N,N,L}^{\Int}-H\|_{\HK}=\mathcal O_{\mathbb P}(N^{-\gamma/(2\gamma+2)})$ (Corollary~\ref{cor:balanced-joint-rate}). Both integrators attain the same rate; the order $r_\Int$ enters only through the required step-size refinement, $\delta_N^{\MP}\asymp N^{-\gamma/[4(\gamma+1)]}$ versus $\delta_N^{\SE}\asymp N^{-\gamma/[2(\gamma+1)]}$. If the source norms are allowed to grow as $S_\delta^\Int\lesssim\delta^{-\beta}$ for some $\beta\ge0$, the balanced rate becomes $N^{-r_\Int\gamma/[2(r_\Int(\gamma+1)+\beta)]}$, which now depends on $r_\Int$.

For noisy trajectory states $\widetilde z_l^{(n)}=z_l^{(n)}+e_l^{(n)}$, the noise enters the regression problem through two channels: an integrator-dependent anchor perturbation $U_l^{\Int,(n)}$ and a velocity perturbation $W_l^{(n)}=(e_l^{(n)}-e_{l-1}^{(n)})/\delta$. In terms of the realized pathwise noise levels $r_U$ and $r_W$ of~\eqref{eq:pathwise-noise-radii} and the constants $\kappa_1,\kappa_2$ of the embedding $\HK\hookrightarrow C^2(\overline\Omega)$ in Remark~\ref{rem:derivative-bounds}, Proposition~\ref{prop:deterministic-noisy-sensitivity} gives, for every realization of the noise satisfying its hypotheses,
\[
\|\widehat h_{\lambda,NL}^{U,W}-\widehat h_{\lambda,N,L}^{\Int}\|_{\HK}
\le
\frac{r_W}{2\sqrt\eta}+\frac{3\kappa_1\kappa_2}{\eta}\|\HInt_\delta\|_{\HK}\,r_U .
\]
Velocity noise is thus amplified at the scale $\eta^{-1/2}$ and anchor noise at the scale $\eta^{-1}$. Since $W_l^{(n)}$ carries an explicit factor $\delta^{-1}$, decreasing the observation step reduces the integrator-induced error but may amplify the effect of observation noise.

The inverse-target expansion $\check H_{s\delta}^{\Int}=H+\sum_{j\ge1}s^{r_\Int j}\delta^{r_\Int j}\check H_{r_\Int j}^{\Int}$ leads to a multistride Richardson construction that reuses the same trajectories. For an integer stride $s\in\{1,\ldots,Q\}$, where $2\le Q\le L$ is the number of strides (the extrapolation level), the pairs $(z_{l-s}^{(n)},z_l^{(n)})$ are exact-flow pairs over the step $s\delta$ and yield a stride estimator $\widehat h_{s,\delta}^{\Int}$ targeting $\check H_{s\delta}^\Int$; the stride-balanced regularization $\lambda_s=(s\delta)^2\eta$ keeps the effective parameter $\eta$ common to all strides. The Richardson coefficients $\alpha_1^\Int,\ldots,\alpha_Q^\Int$, defined by the Vandermonde system~\eqref{eq:vandermonde}, cancel the first $Q-1$ inverse-target corrections, so that the Richardson estimator $\widehat h_\delta^{\Int,(Q)}=\sum_{s=1}^Q\alpha_s^\Int\widehat h_{s,\delta}^{\Int}$ targets $\check H_\delta^{\Int,(Q)}=\sum_{s=1}^Q\alpha_s^\Int\check H_{s\delta}^\Int$ with $\|\check H_\delta^{\Int,(Q)}-H\|_{\HK}\le C_{Q,\Int}\delta^{r_\Int Q}$ (Proposition~\ref{prop:richardson-target-bias}). For $Q=2$, $(\alpha_1^{\MP},\alpha_2^{\MP})=(\frac43,-\frac13)$ and $(\alpha_1^{\SE},\alpha_2^{\SE})=(2,-1)$. The price is the Richardson amplification factor $\mathcal A_Q^\Int=\sum_{s=1}^Q|\alpha_s^\Int|$: Theorem~\ref{thm:rich-decomp} gives, with probability at least $1-\varepsilon$,
\[
\bigl\|\widehat h_\delta^{\Int,(Q)}-H\bigr\|_{\HK}
\le
C\mathcal A_Q^\Int\left(\frac{2d\kappa^2M_{Q,\delta}^\Int}{\eta}\sqrt{\frac{\log(4Q/\varepsilon)}{N}}+\eta^\gamma S_{Q,\delta}^\Int\right)
+C_{Q,\Int}\delta^{r_\Int Q},
\]
where $M_{Q,\delta}^\Int=\max_{1\le s\le Q}\|\check H_{s\delta}^\Int\|_{\HK}$ and $S_{Q,\delta}^\Int$ is the largest minimal stride source norm. No independence across strides is required. Under uniform source bounds, the Richardson estimator attains the rate $\mathcal O_{\mathbb P}(\mathcal A_Q^\Int N^{-\gamma/(2\gamma+2)})$ with the relaxed step-size refinement $\delta_N\asymp N^{-\gamma/[r_\Int Q(2\gamma+2)]}$, so that, for fixed $Q$, the same reconstruction rate is achieved with a larger observation step.

The H\'enon--Heiles experiments are consistent with this picture: they exhibit the predicted orders $r_{\MP}=2$ and $r_{\SE}=1$ together with the leading inverse-target corrections, the saturation of the error in $N$ at fixed $\delta$, and the effectiveness of Richardson extrapolation until estimation, approximation, or observation-noise effects dominate. Under a fixed observation budget $NL$, allocating observations to independent trajectories is more effective than lengthening trajectories. The midpoint estimator also gives accurate reconstructions for the double pendulum, Frenkel--Kontorova, and strongly non-convex examples.

Several questions remain. The present trajectory-block bound does not exploit the dependence structure inside each trajectory; incorporating mixing, ergodicity, or other dynamical information could sharpen the dependence on $L$, in the spirit of related learning results from dependent trajectory data \cite{lu2019nonparametric,feng2024learning}. The pathwise noise analysis could likewise be complemented by a statistical errors-in-variables treatment identifying the population target under a prescribed observation-noise law; such a treatment would also make it possible to analyze jointly the choice of $(\delta,\eta,Q)$ under noise, where the $\delta^{-1}$ amplification of velocity noise and the Richardson factor $\mathcal A_Q^\Int$ compete with the reduction of the integrator-induced error. The balanced rates rely on the unknown exponent $\gamma$ and on the uniform source bounds~\eqref{eq:uniform-source-norm} and~\eqref{eq:uniform-richardson-bounds}; verifying these bounds for concrete classes of Hamiltonians and sampling laws, and designing data-driven choices of $\eta$, $\delta$, and $Q$, are natural next steps. It would also be useful to extend the exact inverse-target construction beyond midpoint and symplectic Euler to higher-order symplectic and composition methods, building on the broader framework of geometric numerical integration \cite{HLW06,sanzserna1994numerical,leimkuhler2004simulating}, and to determine how much of the analytic construction survives under finite regularity.

\section*{Acknowledgments}
The authors acknowledge financial support from the School of Physical and Mathematical Sciences of the Nanyang Technological University. Financial support from Singapore's Ministry of Education Tier 1 grant RG100/24 entitled ``Kernel methods for global structure preserving machine learning" is also acknowledged. MV acknowledges financial support from grants PID2024-155187OB-I00, PCI2024-155047-2, and PID2022-137909NB-C21. DMdD  acknowledges financial support from the Grants PID2022-137909-NB-C21 and   CEX2023-001347-S funded by MICIU/AEI/10.13039/501100011033.

\bibliographystyle{plain}
\bibliography{references}

\clearpage
\appendix

\phantomsection

\section{Backward error analysis and inverse modified Hamiltonians}
\label{sec:bae}

This appendix constructs and analyzes the exact inverse modified Hamiltonians used for the two integrators considered in the paper. For the implicit midpoint rule, the construction locally inverts the exact-flow midpoint map; for the $p$-implicit symplectic Euler rule, it locally inverts the exact-flow mixed-endpoint map. In both cases, symplecticity implies that the one-form forced by the exact chord is closed, yielding a genuine jointly analytic Hamiltonian on a fixed neighborhood. The resulting analytic coefficient bounds yield both uniform smoothness estimates and, under the native-space assumptions introduced in Section~\ref{sec:rkhs-formulation}, corresponding bounds in $\HK$. These estimates give the method-specific target biases: quadratic and even in $\delta$ for midpoint, but generally first order with a full power-series expansion for symplectic Euler. Both constructions proceed directly from the exact flow and do not rely on a general B-series construction. Sections~\ref{sec:bae-midpoint} and~\ref{sec:bae-se} are organized in parallel: each first constructs the exact inverse target and then derives its integrator-specific coefficient and target-bias bounds.

\subsection{Implicit midpoint: exact inverse modified Hamiltonian}
\label{sec:bae-midpoint}

\subsubsection{Exact midpoint-map construction}

For $z$ near $\mathcal K$ and $\delta$ near zero, let
\[
\psi_\delta(z)
:=
\frac{z+\varphi_\delta^H(z)}{2}
\]
be the midpoint of the exact-flow pair.  If a vector field $g$ reproduces that
pair through one midpoint step, its value at $m=\psi_\delta(z)$ is forced to be
\[
g(m)=\frac{\varphi_\delta^H(z)-z}{\delta}.
\]
The next proposition shows that this pointwise prescription defines a genuine
Hamiltonian field on a fixed neighborhood.

\begin{proposition}[Exact analytic midpoint inverse]\label{prop:imh-analytic}
Under Assumption~\ref{ass:analytic-setting}, there exist $\sigma>0$ and $\delta_*>0$ such that
the following statements hold.

\begin{enumerate}[label=\textup{(\roman*)},leftmargin=2em]
\item The map $\psi_\delta$ has a jointly holomorphic local inverse
$z_\delta=\psi_\delta^{-1}$ on $\mathcal K_\sigma$ for $|\delta|<2\delta_*$.
The removable definition
\begin{equation}\label{eq:inv-mod-condition}
\widehat f_\delta(m)
:=
\begin{cases}
\dfrac{\varphi_\delta^H(z_\delta(m))-z_\delta(m)}{\delta},
&\delta\ne0,\\[7pt]
f(m),&\delta=0,
\end{cases}
\end{equation}
is jointly holomorphic in $(m,\delta)$.

\item There is a jointly holomorphic Hamiltonian
$\check H_\delta$, real for real $(m,\delta)$ and unique up to a
$\delta$-dependent additive constant, such that
\[
\widehat f_\delta=J\nabla\check H_\delta.
\]
After fixing $z_\star\in\mathcal K$ and imposing
$\check H_\delta(z_\star)=H(z_\star)$, one has the exact identity
\begin{equation}\label{eq:exact-inverse-midpoint}
I_\delta^{\check H_\delta}(z)=\varphi_\delta^H(z)
\end{equation}
for every $z\in\mathcal K$ and $|\delta|<\delta_*$.

\item The family is even:
$\check H_{-\delta}=\check H_\delta$.  Consequently,
\begin{equation}\label{eq:inv-mod-recursion}
\check H_\delta
=
H+\sum_{k\ge1}\delta^k\check H_k
=
H+\sum_{j\ge1}\delta^{2j}\check H_{2j},
\qquad
\widehat f_\delta
=
f+\sum_{k\ge1}\delta^k
\hypertarget{sym-fhatk}{\widehat f_k},
\end{equation}
where both series converge uniformly, together with every fixed number of
spatial derivatives, on compact subsets of $\mathcal K_\sigma$ for
$|\delta|<2\delta_*$.  These Taylor coefficients are the formal inverse
modified coefficients: substituting the series into
$I_\delta^{\check H_\delta}=\varphi_\delta^H$ and matching powers of $\delta$
recovers the usual formal inverse modified equation.
\end{enumerate}
\end{proposition}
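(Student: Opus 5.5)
The plan is to treat the exact flow as a holomorphic function of $(z,\delta)$ and invert the midpoint map by a quantitative inverse function theorem. Then closedness of the forced one-form will follow from symplecticity of the flow, and evenness from time reversibility.

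\textbf{Step 1 (holomorphy and inversion).} By Cauchy--Kovalevskaya/analytic dependence for $\dot z=f(z)$ with $f$ holomorphic and bounded on $\mathcal K_{r_0/2}$, the map $(z,\delta)\mapsto\varphi_\delta^H(z)$ is jointly holomorphic on $\mathcal K_{r}\times\{|\delta|<\delta_1\}$ for some $r<r_0$. It satisfies $\varphi_\delta^H(z)=z+\delta f(z)+\delta^2 R(z,\delta)$ with $R$ bounded. Hence
\[
\psi_\delta(z)=z+\tfrac{\delta}{2}\,\Phi(z,\delta),
\qquad
\Phi(z,\delta):=\frac{\varphi^H_\delta(z)-z}{\delta}\ \text{(removable at $\delta=0$)},
\]
with $\Phi$ holomorphic and bounded. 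I would solve $\psi_\delta(z)=m$ for $m\in\mathcal K_\sigma$ as the fixed point $z=m-\tfrac\delta2\Phi(z,\delta)$. By Cauchy estimates, $\|D_z\Phi\|$ is bounded on a slightly smaller tube, so for $|\delta|$ small the map is a contraction of the closed ball of radius $\sim|\delta|\sup|\Phi|$ around $m$ into $\mathcal K_r$. The fixed point $z_\delta(m)$ is a uniform limit of jointly holomorphic iterates, hence jointly holomorphic. Uniqueness in that ball gives the local inverse. Then $\widehat f_\delta(m)=\Phi(z_\delta(m),\delta)$ is jointly holomorphic and equals $f(m)$ at $\delta=0$, which proves (i). Reality for real arguments is inherited from $f$.

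\textbf{Step 2 (closedness).} This is the main obstacle. I need $J D\widehat f_\delta+(D\widehat f_\delta)^\top J=0$. Write $A=D\varphi^H_\delta(z)$, which is symplectic. Since $m=(z+\varphi(z))/2$ and $\delta\widehat f_\delta(m)=\varphi(z)-z$, the chain rule gives
\[
\delta D\widehat f_\delta=(A-I)\,2(A+I)^{-1}.
\]
This is the Cayley transform of a symplectic matrix, and $C=(A-I)(A+I)^{-1}$ satisfies $C^\top J+JC=0$. Verifying this uses $A^\top JA=J$: multiply by $(A+I)^\top$ on the left and $(A+I)$ on the right and expand. Invertibility of $A+I$ holds for small $\delta$. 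For $\delta\neq0$ this gives the infinitesimal symplectic condition. At $\delta=0$ it holds by continuity, or directly since $Df$ is Hamiltonian.

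\textbf{Step 3 (potential).} On the convex, hence simply connected, set $\mathcal K_\sigma$, the closed holomorphic one-form $-J\widehat f_\delta\cdot dm$ has the primitive
\[
\check H_\delta(m)=H(z_\star)+\int_0^1 \bigl\langle -J\widehat f_\delta(z_\star+t(m-z_\star)),\,m-z_\star\bigr\rangle\,dt .
\]
This is jointly holomorphic, real on reals, normalized at $z_\star$, and unique up to a $\delta$-dependent constant. The identity \eqref{eq:exact-inverse-midpoint} holds because $z'=\varphi^H_\delta(z)$ satisfies $z'=z+\delta J\nabla\check H_\delta((z+z')/2)$. This $z'$ is the branch continuous at $\delta=0$ by the uniqueness of the implicit midpoint solution near $z$ for small $\delta$. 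Shrinking $\delta_*$ to half the radius handles this.

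\textbf{Step 4 (evenness and series).} Since $\varphi_{-\delta}=\varphi_\delta^{-1}$, the pair $(z',z)$ with $z'=\varphi_\delta(z)$ has the same midpoint and satisfies
\[
\frac{z-z'}{-\delta}=\frac{z'-z}{\delta}.
\]
Hence $\widehat f_{-\delta}=\widehat f_\delta$ by uniqueness of the inverse, and with the normalization $\check H_{-\delta}=\check H_\delta$. Joint holomorphy on a polydisc in $\delta$ gives Taylor expansions converging uniformly on compacts, with derivatives by Cauchy estimates. Odd coefficients vanish by evenness. Uniqueness of power series coefficients identifies them with the formal inverse modified coefficients.
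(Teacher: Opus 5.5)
Your proposal is correct and follows essentially the same route as the paper. Both arguments use holomorphic dependence of the flow, local inversion of the midpoint map $\psi_\delta$, closedness of $-J\widehat f_\delta\cdot dm$ from symplecticity of $\varphi_\delta^H$, the normalized line-integral primitive on the convex tube $\mathcal K_\sigma$, and evenness from $\varphi_{-\delta}^H=(\varphi_\delta^H)^{-1}$.

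The differences are only in implementation. You invert $\psi_\delta$ by an explicit contraction, which gives uniformity on the whole tube at once; the paper instead uses the analytic implicit function theorem together with compactness and patching. Your Cayley-transform identity $\delta D\widehat f_\delta=2(A-I)(A+I)^{-1}$ is the same algebra the paper obtains by pulling $\omega$ back along the endpoint maps $m\mp\tfrac{\delta}{2}\widehat f_\delta(m)$.
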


\begin{proof}
\emph{Step 1: invert the midpoint coordinate.}
The holomorphic ODE theorem
(Proposition~\ref{prop:appendix-holomorphic-ode}) gives joint holomorphy of
$(\delta,z)\mapsto\varphi_\delta^H(z)$ on a neighborhood of
$\{0\}\times\mathcal K$.  Hence $\psi_\delta$ is jointly holomorphic.  Consider
\[
G(\delta,m;z):=\psi_\delta(z)-m.
\]
Since $\psi_0=\operatorname{id}$, for every $m\in\mathcal K$,
\[
G(0,m;m)=0,
\qquad
D_zG(0,m;m)=I.
\]
The analytic implicit function theorem
(Proposition~\ref{prop:appendix-analytic-ift}) therefore gives, locally, a unique jointly
holomorphic solution $z=z_\delta(m)$.  Apply this construction on the closure of a sufficiently
small preliminary tube about $\mathcal K$.  Compactness gives a common complex step-size disk and,
after passing to a smaller tube $\mathcal K_\sigma$, uniqueness makes the local solutions agree on
overlaps.  Thus $z_\delta$ is a jointly holomorphic local inverse and, after shrinking the disk once
more, its two inverse identities give
\[
\psi_\delta(z_\delta(m))=m
\quad(m\in\mathcal K_\sigma),
\qquad
z_\delta(\psi_\delta(z))=z
\quad(z\in\mathcal K).
\]
Here the disk is chosen so that every exact midpoint issued from $\mathcal K$ lies in
$\mathcal K_\sigma$.  In~\eqref{eq:inv-mod-condition} the apparent quotient at
$\delta=0$ is removable because
\[
\frac{\varphi_\delta^H(z)-z}{\delta}
=
\int_0^1 f(\varphi_{t\delta}^H(z))\,dt.
\]
Thus $\widehat f_\delta$ is jointly holomorphic and
$\widehat f_0=f$.

\medskip\noindent
\emph{Step 2: use symplecticity to obtain a Hamiltonian.}
For $\delta\ne0$ and $m$ in the image of $\psi_\delta$, write the two endpoints as
$x_\delta(m):=z_\delta(m)$ and
$y_\delta(m):=\varphi_\delta^H(z_\delta(m))$. By the definitions of the midpoint
and the chord field,
\[
m=\frac{x_\delta(m)+y_\delta(m)}{2},
\qquad
\widehat f_\delta(m)=\frac{y_\delta(m)-x_\delta(m)}{\delta}.
\]
Solving these two equations for the endpoints gives
\[
x_\delta(m)=m-\frac{\delta}{2}\widehat f_\delta(m),
\qquad
y_\delta(m)=m+\frac{\delta}{2}\widehat f_\delta(m).
\]
The same endpoint formulas hold at $\delta=0$ by continuity, since then
$x_0(m)=y_0(m)=m$. In particular, no additional factor occurs. By construction,
$y_\delta=\varphi_\delta^H\circ x_\delta$.

Fix $m$. Differentiating the endpoint maps with respect to $m$ gives
\[
Dx_\delta(m)=I-\frac{\delta}{2}D\widehat f_\delta(m),
\qquad
Dy_\delta(m)=I+\frac{\delta}{2}D\widehat f_\delta(m).
\]
Since the complexified exact Hamiltonian flow is symplectic,
\[
y_\delta^*\omega
=x_\delta^*(\varphi_\delta^H)^*\omega
=x_\delta^*\omega.
\]
In canonical coordinates the pullback of $\omega$ by a differentiable map $G$
is represented by $(DG)^\top J(DG)$. Hence the preceding equality is
\[
\left(I-\frac{\delta}{2}D\widehat f_\delta(m)\right)^\top
J\left(I-\frac{\delta}{2}D\widehat f_\delta(m)\right)
=
\left(I+\frac{\delta}{2}D\widehat f_\delta(m)\right)^\top
J\left(I+\frac{\delta}{2}D\widehat f_\delta(m)\right),
\]
whose two sides expand, respectively, as
\begin{align*}
J
&-\frac{\delta}{2}J D\widehat f_\delta(m)
-\frac{\delta}{2}(D\widehat f_\delta(m))^\top J
+\frac{\delta^2}{4}(D\widehat f_\delta(m))^\top
  J D\widehat f_\delta(m),\\
J
&+\frac{\delta}{2}J D\widehat f_\delta(m)
+\frac{\delta}{2}(D\widehat f_\delta(m))^\top J
+\frac{\delta^2}{4}(D\widehat f_\delta(m))^\top
  J D\widehat f_\delta(m).
\end{align*}
Canceling the common zeroth- and second-order terms gives
\[
-\frac{\delta}{2}
\left[
(D\widehat f_\delta(m))^\top J+J D\widehat f_\delta(m)
\right]
=
\frac{\delta}{2}
\left[
(D\widehat f_\delta(m))^\top J+J D\widehat f_\delta(m)
\right].
\]
For $\delta\ne0$, rearranging and dividing by $\delta$ therefore gives
\begin{equation}\label{eq:fhat-hamiltonian-jacobian}
(D\widehat f_\delta(m))^\top J
+J\,D\widehat f_\delta(m)=0.
\end{equation}
At $\delta=0$, Step~1 gives $\widehat f_0=f=J\nabla H$, and hence
$D\widehat f_0=J\Hess H$. Using $J^\top=-J$, $J^2=-I$, and symmetry of
$\Hess H$ gives
\[
(D\widehat f_0)^\top J+J D\widehat f_0
=(J\Hess H)^\top J+J(J\Hess H)
=\Hess H-\Hess H=0.
\]
Thus~\eqref{eq:fhat-hamiltonian-jacobian} holds for every $|\delta|<2\delta_*$,
including $\delta=0$.

Now set $g_\delta:=-J\widehat f_\delta$. Its Jacobian is
$Dg_\delta=-J D\widehat f_\delta$, and
\eqref{eq:fhat-hamiltonian-jacobian} implies
\[
(Dg_\delta)^\top
=(D\widehat f_\delta)^\top J
=-J D\widehat f_\delta
=Dg_\delta.
\]
Therefore, the one-form $g_\delta(m)^\top dm$ is closed. Because
$\mathcal K_\sigma$ may be chosen convex, it is exact. With the normalization at
$z_\star$, define
\begin{equation}\label{eq:checkH-line-integral}
\check H_\delta(m)
:=
H(z_\star)
+
\int_0^1
(m-z_\star)^\top
\bigl[-J\widehat f_\delta(z_\star+t(m-z_\star))\bigr]\,dt.
\end{equation}
For completeness, let $m_t=z_\star+t(m-z_\star)$ and differentiate this formula in
a direction $h$. Symmetry of $Dg_\delta$ gives
\begin{align*}
D\check H_\delta(m)[h]
&=\int_0^1\left[
h^\top g_\delta(m_t)
+t(m-z_\star)^\top Dg_\delta(m_t)h
\right]dt\\
&=\int_0^1\frac{d}{dt}
\left[t\,h^\top g_\delta(m_t)\right]dt
=h^\top g_\delta(m).
\end{align*}
Consequently $\nabla\check H_\delta=g_\delta=-J\widehat f_\delta$, equivalently
$\widehat f_\delta=J\nabla\check H_\delta$. Formula
\eqref{eq:checkH-line-integral} also gives joint holomorphy and reality on real
arguments.  Finally, if $m=\psi_\delta(z)$, then
\[
z=m-\frac{\delta}{2}\widehat f_\delta(m),
\qquad
\varphi_\delta^H(z)
=m+\frac{\delta}{2}\widehat f_\delta(m),
\]
which proves~\eqref{eq:exact-inverse-midpoint}.  Local uniqueness of the
midpoint branch follows from Proposition~\ref{prop:appendix-analytic-ift} applied to
$m-z-(\delta/2)\widehat f_\delta(m)=0$: its derivative with respect to $m$ is $I$ at
$\delta=0$, and compactness of $\mathcal K$ makes the choice uniform after decreasing
$\delta_*$.

\medskip\noindent
\emph{Step 3: exploit symmetry.}
Let $m=\psi_\delta(z)$ and $w=\varphi_\delta^H(z)$.  The flow-group identity
gives
\[
\psi_{-\delta}(w)
=
\frac{w+\varphi_{-\delta}^H(w)}{2}
=
\frac{w+z}{2}
=
m.
\]
Using~\eqref{eq:inv-mod-condition} at $\delta$ and $-\delta$ therefore yields
\[
\widehat f_{-\delta}(m)
=
\frac{\varphi_{-\delta}^H(w)-w}{-\delta}
=
\frac{z-w}{-\delta}
=
\widehat f_\delta(m).
\]
The normalization in~\eqref{eq:checkH-line-integral} then gives
$\check H_{-\delta}=\check H_\delta$.  Since
$\check H_0=H$, the convergent Taylor expansions in~\eqref{eq:inv-mod-recursion}
follow, and all odd coefficients vanish.
\end{proof}

\subsubsection{Quadratic target bias}

\begin{corollary}[Coefficient, native-space, and target-bias bounds]
\label{cor:imh-native-bound}
Under Assumption~\ref{ass:analytic-setting}, for every fixed integer $m\ge0$ there is $C_m>0$ such that
\begin{equation}\label{eq:imh-Cm-bound}
 \max_{|\alpha|\le m}\sup_{z\in\mathcal K}
 |\partial^\alpha\check H_k(z)|
 \le C_m\delta_*^{-k},
 \qquad k\ge1.
\end{equation}
If, in addition, Assumption~\ref{ass:domain} holds and an integer $m>s$ is chosen, then
\begin{equation}\label{eq:imh-native-coeff-bound}
 \|\check H_k\|_{\HK}
 \le C_\Omega\delta_*^{-k},
 \qquad k\ge1.
\end{equation}
For $|\delta|\le\delta_*/2$, the normalized family satisfies
\begin{align}
 \|\nabla\check H_\delta-\nabla H\|_{C^0(\mathcal K)}
 &\le C_{\mathcal K}\delta^2,
 \label{eq:target-bias-grad}\\
 \|\check H_\delta-H\|_{C^0(\mathcal K)}
 &\le C_{\mathcal K}\delta^2,
 \label{eq:target-bias-fun}\\
 \|\check H_\delta-H\|_{\HK}
 &\le C_\Omega\delta^2.
 \label{eq:target-bias-native}
\end{align}
The last estimate uses Assumption~\ref{ass:domain}.
\end{corollary}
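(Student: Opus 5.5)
The plan is to obtain everything from Cauchy estimates applied to the jointly holomorphic family $(m,\delta)\mapsto\check H_\delta(m)$ of Proposition~\ref{prop:imh-analytic}.

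\emph{Uniform bound on a complex product domain.} Fix $\sigma'<\sigma$. The line-integral formula \eqref{eq:checkH-line-integral} expresses $\check H_\delta$ through $\widehat f_\delta$, which is jointly holomorphic on $\mathcal K_\sigma\times\{|\delta|<2\delta_*\}$. After shrinking slightly, it is bounded on the closure of $\mathcal K_{\sigma'}\times\{|\delta|\le\tfrac32\delta_*\}$. Since $\mathcal K_\sigma$ is convex, the segment from $z_\star$ to $m$ stays inside it. Hence $M:=\sup|\check H_\delta(m)|<\infty$ over $m\in\mathcal K_{\sigma'}$ and $|\delta|\le\tfrac32\delta_*$.

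\emph{Coefficient bounds.} The Cauchy formula in $\delta$ on the circle $|\delta|=\delta_*$ gives
\[
\check H_k(m)=\frac1{2\pi i}\oint_{|\delta|=\delta_*}\check H_\delta(m)\,\delta^{-k-1}\,d\delta ,
\]
so that $\sup_{\mathcal K_{\sigma'}}|\check H_k|\le M\delta_*^{-k}$. Cauchy estimates in the spatial variables on polydiscs of radius $\sigma'/2$ centred at points of $\mathcal K$ then give $|\partial^\alpha\check H_k|\le \alpha!\,(2/\sigma')^{|\alpha|}M\delta_*^{-k}$. This is \eqref{eq:imh-Cm-bound} with $C_m=M\max_{|\alpha|\le m}\alpha!(2/\sigma')^{|\alpha|}$. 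The constant is uniform in $k$, which is the point of working on a fixed complex disk.

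\emph{Native-space bounds.} Since $\overline\Omega\subset\mathcal K$ and $\Omega$ is bounded, we have $\|u\|_{H^m(\Omega)}\le|\Omega|^{1/2}\,c_m\max_{|\alpha|\le m}\sup_{\mathcal K}|\partial^\alpha u|$. Choose $m>s$ (an integer $m\ge s$ suffices) so that $H^m(\Omega)\hookrightarrow H^s(\Omega)$. The norm equivalence $\HK\cong H^s(\Omega)$ of Assumption~\ref{ass:domain} then yields \eqref{eq:imh-native-coeff-bound}.

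\emph{Target bias.} By Proposition~\ref{prop:imh-analytic}(iii), $\check H_\delta-H=\sum_{j\ge1}\delta^{2j}\check H_{2j}$, with convergence in $C^m(\mathcal K)$. For $|\delta|\le\delta_*/2$ we sum the geometric series:
\[
\|\check H_\delta-H\|_{C^m(\mathcal K)}\le C_m\sum_{j\ge1}(\delta/\delta_*)^{2j}\le \tfrac43 C_m\delta_*^{-2}\delta^2 .
\]
Taking $m=0$ and $m=1$ gives \eqref{eq:target-bias-fun} and \eqref{eq:target-bias-grad}. Applying \eqref{eq:imh-native-coeff-bound} termwise in $\HK$ gives \eqref{eq:target-bias-native}. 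The series converges in $\HK$ because its terms are absolutely summable there, and its limit agrees with $\check H_\delta-H$ since $\HK$ convergence implies pointwise convergence. Evenness is used only to start the sum at $\delta^2$.

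The main point to check is the first step: ensuring a single complex neighbourhood on which the normalized $\check H_\delta$ is holomorphic and bounded uniformly in $\delta$ on a disk strictly larger than $|\delta|\le\delta_*$. The normalization at $z_\star$ is harmless because $\check H_\delta(z_\star)=H(z_\star)$ is constant in $\delta$. Any loss from shrinking the radii is absorbed by replacing $\delta_*$ with a slightly smaller value, as the statement permits.
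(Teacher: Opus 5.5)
Your proposal is correct and follows essentially the same route as the paper. Both arguments use a uniform bound for the jointly holomorphic family on a fixed tube, then spatial Cauchy estimates combined with the Cauchy coefficient formula on $|\delta|=\delta_*$ (you apply these two in the opposite order, which is immaterial), then restriction to $\overline\Omega$ with $H^m(\Omega)\hookrightarrow H^s(\Omega)\cong\mathcal H_K$, and finally geometric summation starting at $\delta^2$ by evenness. One small fix: a polydisc of polyradius $\sigma'/2$ about a point of $\mathcal K$ reaches Euclidean distance $\sqrt{2d}\,\sigma'/2>\sigma'$ once $d\ge 3$, so take polyradius $\sigma'/\sqrt{2d}$ instead; this only changes the constant $C_m$.
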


\begin{proof}
Choose $0<r<\sigma$ and set
\[
M_r:=\sup_{\substack{|\zeta|=\delta_*\\z\in\mathcal K_r}}
|\check H_\zeta(z)|<\infty.
\]
For every multi-index $\alpha$ with $|\alpha|\le m$, the spatial Cauchy estimate of
Proposition~\ref{prop:appendix-cauchy-estimates} gives, uniformly on $|\zeta|=\delta_*$,
\[
 \sup_{z\in\mathcal K}|\partial_z^\alpha\check H_\zeta(z)|
 \le C_{m,r}M_r.
\]
Cauchy's coefficient formula in $\delta$ (Proposition~\ref{prop:appendix-cauchy-integral}) then gives
\[
 \partial_z^\alpha\check H_k(z)
 =\frac{1}{2\pi i}\int_{|\zeta|=\delta_*}
 \frac{\partial_z^\alpha\check H_\zeta(z)}{\zeta^{k+1}}\,d\zeta,
 \qquad
 \|\partial_z^\alpha\check H_k\|_{C^0(\mathcal K)}
 \le C_{m,r}M_r\delta_*^{-k},
\]
which proves~\eqref{eq:imh-Cm-bound}.  Restricting these bounds to $\overline\Omega$, choosing
an integer $m>s$, and using
\(
 H^m(\Omega)\hookrightarrow H^s(\Omega)\cong\HK
\)
proves~\eqref{eq:imh-native-coeff-bound}.  Finally, evenness gives $\check H_k=0$ for odd $k$.
Summing the convergent coefficient series from $k=2$, using
\(
 \sum_{k\ge2}(|\delta|/\delta_*)^k\le2\delta^2/\delta_*^2
\), proves~\eqref{eq:target-bias-grad} and~\eqref{eq:target-bias-fun} from
\eqref{eq:imh-Cm-bound} with $m=1$ and $m=0$, respectively, and proves
\eqref{eq:target-bias-native} from~\eqref{eq:imh-native-coeff-bound}.
\end{proof}

\begin{remark}[Inverse versus forward modified Hamiltonians for the midpoint rule]
The result above is different from the classical forward modified Hamiltonian.
The forward construction asks for a Hamiltonian whose \emph{exact flow}
represents a numerical step and is generally interpreted as an asymptotic
series.  Here the direction is reversed: the exact flow
$\varphi_\delta^H$ is given, and we seek the Hamiltonian whose \emph{midpoint
step} reproduces that flow.  The single midpoint coordinate
$\psi_\delta(z)$ makes this inverse problem pointwise.  Its local analytic
inverse produces the genuine analytic family $\check H_\delta$, and the
convergent series is simply its Taylor series in $\delta$.
\end{remark}

\subsection{Symplectic Euler: exact inverse modified Hamiltonian}
\label{sec:bae-se}

\subsubsection{Exact mixed-endpoint construction}

Let
\(
 M:=\mathbb R_q^d\times\mathbb R_p^d
\)
be the original phase space, with generic canonical coordinates \((q,p)\), and let
\(
 M_{\mathrm{mix}}:=\mathbb R_q^d\times\mathbb R_P^d
\)
be a notationally distinct copy used for mixed endpoints, with coordinates \((q,P)\).  Write the
exact flow in blocks as
\[
 \varphi_\delta^H(q,p)
 =
 \bigl(Q_\delta(q,p),P_\delta(q,p)\bigr).
\]
Define the mixed map from the initial-state copy to the mixed-endpoint copy by
\begin{equation*}
 \chi_\delta(q,p)
 :=
 \bigl(q,P_\delta(q,p)\bigr).
\end{equation*}
Under the canonical identification of the two Euclidean copies, \(\chi_0=\operatorname{Id}\), so
this map is locally invertible for small \(\delta\).  Write its inverse as
\[
 z_\delta(q,P)
 =
 \chi_\delta^{-1}(q,P)
 =
 \bigl(q,\pi_\delta(q,P)\bigr)
\]
and set
\[
 \mathcal Q_\delta(q,P)
 :=
 Q_\delta\bigl(q,\pi_\delta(q,P)\bigr).
\]
The chord prescription is naturally indexed by \((q,P)\in M_{\mathrm{mix}}\).  We identify
\(M_{\mathrm{mix}}\) with \(M\) by \((q,P)\mapsto(q,p=P)\), use the same symbols for the
resulting pullbacks, and define the inverse modified vector field on the original phase space by
\begin{equation}\label{eq:inv-mod-condition-se}
 \widehat f_\delta(q,p)
 :=
 \begin{cases}
 \displaystyle
 \frac1\delta
 \binom{\mathcal Q_\delta(q,p)-q}
       {p-\pi_\delta(q,p)},&\delta\ne0,\\[10pt]
 J\nabla H(q,p),&\delta=0.
 \end{cases}
\end{equation}
Thus \(\widehat f_\delta\colon M\to\mathbb R^{2d}\) lives in the same vector-field space as
\(f=J\nabla H\).  Before the identification, lowercase \(p\) denotes the initial momentum in the
flow map, whereas capital \(P\) denotes the mixed/final momentum.  After the identification, the
second argument in \(\widehat f_\delta(q,p)\) is a generic phase-space coordinate; for a particular
step it takes the value \(P\).  Thus, if
\(
 \varphi_\delta^H(q,p)=(Q,P)
\), then
\[
 \widehat f_\delta(q,P)
 =
 \frac1\delta\binom{Q-q}{P-p}.
\]

\begin{proposition}[Exact analytic symplectic Euler inverse]\label{prop:imh-analytic-se}
Under Assumption~\ref{ass:analytic-setting}, there exist \(\sigma>0\) and \(\delta_*>0\) such that:

\begin{enumerate}[label=\textup{(\roman*)},leftmargin=2em]
\item The mixed map \(\chi_\delta\) has a jointly holomorphic inverse \(z_\delta\) on
\(\mathcal K_\sigma\) for \(|\delta|<2\delta_*\), and
\(\widehat f_\delta\) in~\eqref{eq:inv-mod-condition-se} is jointly holomorphic.

\item There is a jointly holomorphic Hamiltonian \(\check H_\delta\), real on real arguments and
unique up to a \(\delta\)-dependent constant, such that
\[
 \widehat f_\delta=J\nabla\check H_\delta .
\]
After fixing \(z_\star\in\mathcal K\) and imposing
\(\check H_\delta(z_\star)=H(z_\star)\), it satisfies
\begin{equation*}
 I_\delta^{\check H_\delta}(z)=\varphi_\delta^H(z),
 \qquad z\in\mathcal K,\quad |\delta|<\delta_*.
\end{equation*}

\item The normalized target has the convergent expansion
\begin{equation}\label{eq:inv-mod-recursion-se}
 \check H_\delta
 =
 H+\sum_{k\ge1}\delta^k\check H_k ,
 \qquad
 \widehat f_\delta
 =
 J\nabla H+\sum_{k\ge1}\delta^k\widehat f_k .
\end{equation}
The series converge with every fixed number of spatial derivatives on compact subsets of
\(\mathcal K_\sigma\).  Their coefficients are the formal inverse modified coefficients.

\item The first coefficient is
\begin{equation}\label{eq:se-first-coefficient}
 \check H_1(z)
 =
 \frac12\langle H_q(z),H_p(z)\rangle
 -
 \frac12\langle H_q(z_\star),H_p(z_\star)\rangle .
\end{equation}
In particular, the family is generally not even in \(\delta\).
\end{enumerate}
\end{proposition}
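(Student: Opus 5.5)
We follow the three-step template of the proof of Proposition~\ref{prop:imh-analytic}, with the mixed map $\chi_\delta$ in place of the midpoint map $\psi_\delta$.

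\emph{Step 1 (inversion and holomorphy).} By Proposition~\ref{prop:appendix-holomorphic-ode}, $(\delta,q,p)\mapsto\varphi_\delta^H(q,p)=(Q_\delta,P_\delta)$ is jointly holomorphic near $\{0\}\times\mathcal K$. Hence $\chi_\delta(q,p)=(q,P_\delta(q,p))$ is jointly holomorphic with $\chi_0=\mathrm{Id}$. Applying Proposition~\ref{prop:appendix-analytic-ift} to $G(\delta,(q,P);p)=P_\delta(q,p)-P$ gives $D_pG=I$ at $\delta=0$, so $p=\pi_\delta(q,P)$ exists locally. The same compactness and overlap argument as before then produces a common tube $\mathcal K_\sigma$ and disk $|\delta|<2\delta_*$.

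Removability at $\delta=0$ follows from writing $Q_\delta-q=\delta\int_0^1\partial_pH(\varphi_{t\delta}^H)\,dt$ and $p-\pi_\delta=\delta\int_0^1\partial_qH(\varphi_{t\delta}^H(q,\pi_\delta))\,dt$. The limit is $(H_p,-H_q)=J\nabla H$, so $\widehat f_\delta$ is jointly holomorphic.

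\emph{Step 2 (symplecticity gives a Hamiltonian).} Write $\widehat f_\delta=(a,b)$ with $a=(\mathcal Q_\delta-q)/\delta$ and $b=(P-\pi_\delta)/\delta$. Parametrize the endpoints by $x=(q,P)$: the initial state is $(q,P-\delta b(x))$ and the final state is $(q+\delta a(x),P)$, and they are related by $\varphi^H_\delta$.

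Instead of the Jacobian computation, the cleanest route uses the one-form identity. Let $\theta=p\,dq$, so that $\omega=dq\wedge dp$ corresponds to $d\theta$ up to sign convention. Symplecticity of the flow gives $d(P\,dQ-p\,dq)=0$ along the graph. Substituting $Q=q+\delta a$ and $p=P-\delta b$ gives
\[
P\,dQ-p\,dq=\delta\,(P\,da+b\,dq).
\]
Closedness of $P\,da+b\,dq$ is equivalent to closedness of $-a\,dP+b\,dq$, since the two differ by the exact form $d(Pa)$. This is exactly the statement that the one-form $(-J\widehat f_\delta)^\top dx=b\,dq-a\,dP$ is closed. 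Dividing by $\delta\neq0$ and arguing by continuity at $\delta=0$, as in the midpoint proof, closedness holds on the whole disk.

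Convexity of $\mathcal K_\sigma$ then lets us define $\check H_\delta$ by the line integral~\eqref{eq:checkH-line-integral}, normalized at $z_\star$. This gives $\widehat f_\delta=J\nabla\check H_\delta$, joint holomorphy, reality on real arguments, and uniqueness up to constants. The relation $I_\delta^{\check H_\delta}=\varphi_\delta^H$ is just~\eqref{eq:symplectic-euler} read off from the endpoint parametrization. Local uniqueness of the implicit Euler branch follows from the analytic IFT applied to $P-p+\delta\,\partial_q\check H_\delta(q,P)=0$.

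\emph{Step 3 (expansion).} Part (iii) follows from joint holomorphy by Taylor expansion in $\delta$. Cauchy estimates give uniform convergence with spatial derivatives. Matching powers identifies the coefficients with the formal ones.

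For (iv) we expand to first order, and this calculation is the main obstacle. The flow gives
\[
Q_\delta=q+\delta H_p+\tfrac{\delta^2}{2}(H_{pq}H_p-H_{pp}H_q)+\dots,\qquad
P_\delta=p-\delta H_q-\tfrac{\delta^2}{2}(H_{qq}H_p-H_{qp}H_q)+\dots,
\]
evaluated at $(q,p)$. We must re-express these at the mixed point $(q,P)$ using $p=P+\delta H_q+O(\delta^2)$. This yields
\[
a=H_p+\delta\bigl(\tfrac12H_{pq}H_p+\tfrac12H_{pp}H_q\bigr)+O(\delta^2),\qquad
b=-H_q-\delta\bigl(\tfrac12H_{qq}H_p+\tfrac12H_{qp}H_q\bigr)+O(\delta^2),
\]
all at $(q,P)$. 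Note that the $-\tfrac12H_{pp}H_q$ from the flow expansion combines with $+H_{pp}H_q$ from the shift. These correction terms are $J\nabla$ of $\tfrac12\langle H_q,H_p\rangle$, since $\partial_p\tfrac12\langle H_q,H_p\rangle=\tfrac12(H_{pq}H_p+H_{pp}H_q)$ and similarly in $q$. Hence $\check H_1=\tfrac12\langle H_q,H_p\rangle+\text{const}$, and the normalization at $z_\star$ fixes the constant as in~\eqref{eq:se-first-coefficient}.

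Non-evenness follows because this coefficient is generically nonzero, for example for the harmonic oscillator, where it equals $\tfrac{\omega^2}{2}qp$.
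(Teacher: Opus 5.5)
Your proof is correct and follows the paper's argument step for step: analytic IFT inversion of $\chi_\delta$, closedness of the chord one-form from symplecticity, the line-integral potential on the convex tube, and Taylor expansion with series reversion for $\check H_1$; your primitive identity $P\,dQ-p\,dq=\delta(P\,da+b\,dq)$ is just a repackaging of the paper's computation $\delta\,d\beta_\delta=-dq\wedge d\pi+d\mathcal Q\wedge dp$ combined with $d\mathcal Q\wedge dp=dq\wedge d\pi$. There are two harmless sign slips: it should read $p-\pi_\delta=-\delta\int_0^1\partial_qH\,dt$ (your stated limit $J\nabla H$ is nonetheless right), and $(-J\widehat f_\delta)^\top dx=-b\,dq+a\,dP$; the latter does not affect closedness, and you define $\check H_\delta$ through $-J\widehat f_\delta$ anyway.
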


\begin{proof}
\emph{Step 1: invert the mixed map.}
By Proposition~\ref{prop:appendix-holomorphic-ode},
\((\delta,z)\mapsto\varphi_\delta^H(z)\) is jointly holomorphic on a neighborhood of
\(\{0\}\times\mathcal K\).  In particular, its second block
\((\delta,q,\eta)\mapsto P_\delta(q,\eta)\) is jointly holomorphic.  Consider, on the
mixed-endpoint copy,
\[
 G(\delta,q,P;\eta):=P_\delta(q,\eta)-P.
\]
Since \(P_0(q,\eta)=\eta\), for every \((q,P)\in\mathcal K\),
\[
 G(0,q,P;P)=0,
 \qquad
 D_\eta G(0,q,P;P)=I.
\]
The analytic implicit function theorem
(Proposition~\ref{prop:appendix-analytic-ift}) therefore gives, locally, a unique jointly
holomorphic solution \(\eta=\pi_\delta(q,P)\).  Compactness of \(\mathcal K\) lets us choose a
common complex step-size disk and a fixed tube \(\mathcal K_\sigma\); after shrinking both if
necessary, uniqueness makes the local solutions agree on overlaps.  Thus
\(z_\delta(q,P)=(q,\pi_\delta(q,P))\) is jointly holomorphic and satisfies
\[
 \chi_\delta\bigl(z_\delta(q,P)\bigr)=(q,P),
 \qquad (q,P)\in\mathcal K_\sigma.
\]
Shrinking \(\delta_*\) once more ensures that every mixed endpoint issued from
\(\mathcal K\) lies in \(\mathcal K_\sigma\).  The local uniqueness in the implicit function
theorem then also gives
\(z_\delta(\chi_\delta(z))=z\) for \(z\in\mathcal K\).

The quotient in~\eqref{eq:inv-mod-condition-se} is removable.  Indeed, after the canonical
identification above, for \(\delta\ne0\) one has
\[
 \widehat f_\delta(q,p)
 =
 \frac{\varphi_\delta^H\bigl(z_\delta(q,p)\bigr)-z_\delta(q,p)}{\delta},
\]
because
\(
 \varphi_\delta^H(z_\delta(q,p))=(\mathcal Q_\delta(q,p),p)
\).  Moreover,
\[
 \frac{\varphi_\delta^H(z)-z}{\delta}
 =
 \int_0^1 f\bigl(\varphi_{t\delta}^H(z)\bigr)\,dt,
\]
and composition with \(z_\delta\) preserves joint holomorphy.  At \(\delta=0\), its value is
\(f=J\nabla H\).

\medskip\noindent
\emph{Step 2: prove closedness directly from symplecticity.}
Use the generic phase-space coordinates \(x=(q,p)\) from
\eqref{eq:inv-mod-condition-se} and abbreviate
\(\pi=\pi_\delta(q,p)\), \(\mathcal Q=\mathcal Q_\delta(q,p)\).  The one-form forced by the chord is
\begin{equation*}
 \beta_\delta
 :=
 (-J\widehat f_\delta)^\top dx
 =
 \frac{\pi-p}{\delta}\cdot dq
 +
 \frac{\mathcal Q-q}{\delta}\cdot dp .
\end{equation*}
For \(\delta\ne0\),
\begin{align*}
 \delta\,d\beta_\delta
 &=
 d\pi\wedge dq-dp\wedge dq
 +d\mathcal Q\wedge dp-dq\wedge dp\\
 &=
 -dq\wedge d\pi+d\mathcal Q\wedge dp.
\end{align*}
Here vector wedge notation means the sum over coordinate pairs.  The exact flow is symplectic, so
after expressing its graph in the mixed coordinates,
\[
 d\mathcal Q\wedge dp=dq\wedge d\pi.
\]
Therefore \(d\beta_\delta=0\).  Joint holomorphy extends this identity to \(\delta=0\), where
\(\beta_0=dH\).

The tube can be chosen convex.  Thus the closed form is exact, and the normalized potential is
\begin{equation}\label{eq:checkH-line-integral-se}
 \check H_\delta(x)
 :=
 H(z_\star)
 +
 \int_0^1
 \beta_\delta\bigl(z_\star+t(x-z_\star)\bigr)[x-z_\star]\,dt .
\end{equation}
Differentiating the line integral, using closedness, gives
\(d\check H_\delta=\beta_\delta\), or
\(\widehat f_\delta=J\nabla\check H_\delta\).  Formula
\eqref{eq:checkH-line-integral-se} also gives joint holomorphy and reality on real arguments.

Now take an initial state \(z=(q,p)\in\mathcal K\), write
\(\varphi_\delta^H(z)=(Q,P)\), and put
\(x=\chi_\delta(z)=(q,P)\).  By the definitions above,
\[
 J\nabla\check H_\delta(q,P)
 =
 \frac1\delta\binom{Q-q}{P-p}.
\]
These are exactly the two equations of the \(p\)-implicit symplectic Euler rule.  The branch
continuous at \(\delta=0\) is locally unique after reducing \(\delta_*\), proving the exact
one-step identity.  Any other scalar with the same gradient differs by a constant.

\medskip\noindent
\emph{Step 3: expand and identify the first coefficient.}
Joint holomorphy gives the Taylor series~\eqref{eq:inv-mod-recursion-se}; substituting it into the exact
one-step identity recovers the formal inverse modified series coefficient by coefficient.

For completeness, keep the generic phase-space argument \(x=(q,p)\) fixed.  The exact-flow Taylor
expansion and series reversion give, with all derivatives evaluated at \((q,p)\),
\begin{align*}
 \pi_\delta(q,p)
 &=
 p+\delta H_q
 +\frac{\delta^2}{2}\bigl(H_{qq}H_p+H_{qp}H_q\bigr)
 +\mathcal O(\delta^3),\\
 \mathcal Q_\delta(q,p)
 &=
 q+\delta H_p
 +\frac{\delta^2}{2}\bigl(H_{pp}H_q+H_{pq}H_p\bigr)
 +\mathcal O(\delta^3).
\end{align*}
Consequently,
\[
 \widehat f_\delta
 =
 J\nabla H
 +
 \frac{\delta}{2}
 J\nabla\langle H_q,H_p\rangle
 +\mathcal O(\delta^2).
\]
Integrating and applying the normalization at \(z_\star\) yields
\eqref{eq:se-first-coefficient}.
\end{proof}

\begin{remark}[The adjoint Euler variant]
For the \(q\)-implicit adjoint rule, the mixed coordinate is \((Q,p)\). Repeating the entire
construction with that coordinate gives a normalized analytic family
\(\check H_\delta^{\mathrm{adj}}\) whose \(q\)-implicit step reproduces the exact flow. Its first
coefficient is
\(\check H_1^{\mathrm{adj}}=-\frac12\langle H_q,H_p\rangle\) modulo constants. This sign change is
also forced by adjointness.
\end{remark}

\subsubsection{First-order target bias}

\begin{corollary}[Coefficient, native-space, and target-bias bounds]
\label{cor:imh-native-bound-se}
Under Assumption~\ref{ass:analytic-setting}, for every fixed integer \(m\ge0\) there is \(C_m>0\) such that
\begin{equation}\label{eq:imh-Cm-bound-se}
 \max_{|\alpha|\le m}\sup_{z\in\mathcal K}
 |\partial^\alpha\check H_k(z)|
 \le C_m\delta_*^{-k},
 \qquad k\ge1.
\end{equation}
If, in addition, Assumption~\ref{ass:domain} holds and an integer \(m>s\) is chosen, then
\begin{equation}\label{eq:imh-native-coeff-bound-se}
 \|\check H_k\|_{\HK}
 \le C_\Omega\delta_*^{-k},
 \qquad k\ge1.
\end{equation}
For \(|\delta|\le\delta_*/2\), the normalized family satisfies
\begin{align}
 \|\nabla\check H_\delta-\nabla H\|_{C^0(\mathcal K)}
 &\le C_{\mathcal K}|\delta|,
 \label{eq:target-bias-grad-se}\\
 \|\check H_\delta-H\|_{C^0(\mathcal K)}
 &\le C_{\mathcal K}|\delta|,
 \label{eq:target-bias-fun-se}\\
 \|\check H_\delta-H\|_{\HK}
 &\le C_\Omega|\delta|.
 \label{eq:target-bias-native-se}
\end{align}
The last estimate uses Assumption~\ref{ass:domain}.  If
\(\nabla\langle H_q,H_p\rangle\) is not identically zero on \(\mathcal K\), the gradient order is
generically sharp.
\end{corollary}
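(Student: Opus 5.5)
The plan mirrors the proof of Corollary~\ref{cor:imh-native-bound}, now using Proposition~\ref{prop:imh-analytic-se} in place of Proposition~\ref{prop:imh-analytic}. By part (ii) of Proposition~\ref{prop:imh-analytic-se}, the normalized family $(\delta,z)\mapsto\check H_\delta(z)$ is jointly holomorphic on $\{|\delta|<2\delta_*\}\times\mathcal K_\sigma$. This holds because it is given by the line integral \eqref{eq:checkH-line-integral-se} of the jointly holomorphic one-form $\beta_\delta$. Fix $0<r<\sigma$. The circle $|\zeta|=\delta_*$ lies inside the disk of holomorphy, and the closure of $\mathcal K_r$ is compact, so
\[
M_r:=\sup_{|\zeta|=\delta_*,\ z\in\mathcal K_r}|\check H_\zeta(z)|<\infty .
\]

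\textbf{Step 1: coefficient bounds.} For $|\alpha|\le m$, the spatial Cauchy estimates (Proposition~\ref{prop:appendix-cauchy-estimates}) on polydiscs of radius comparable to $r$ around points of $\mathcal K$ give
\[
\sup_{\mathcal K}|\partial_z^\alpha\check H_\zeta|\le C_{m,r}M_r
\]
uniformly on $|\zeta|=\delta_*$. Differentiation in $z$ commutes with the Cauchy integral in $\delta$, so the Cauchy coefficient formula (Proposition~\ref{prop:appendix-cauchy-integral}) yields
\[
\partial_z^\alpha\check H_k=\frac{1}{2\pi i}\oint_{|\zeta|=\delta_*}\frac{\partial_z^\alpha\check H_\zeta}{\zeta^{k+1}}\,d\zeta ,
\]
and hence \eqref{eq:imh-Cm-bound-se} with $C_m=C_{m,r}M_r$. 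Next restrict to $\overline\Omega\subset\operatorname{int}\mathcal K$ and take an integer $m>s$. The bound on all derivatives of order at most $m$ controls $\|\check H_k\|_{H^m(\Omega)}$, because $\Omega$ is bounded. The embedding $H^m(\Omega)\hookrightarrow H^s(\Omega)$ together with the norm equivalence $H^s(\Omega)\cong\HK$ from Assumption~\ref{ass:domain} then gives \eqref{eq:imh-native-coeff-bound-se}.

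\textbf{Step 2: target bias.} By the convergent expansion \eqref{eq:inv-mod-recursion-se}, for $|\delta|\le\delta_*/2$ we have $\check H_\delta-H=\sum_{k\ge1}\delta^k\check H_k$. Unlike the midpoint case, the $k=1$ term need not vanish, so the sum starts at $k=1$. Using Step 1 and
\[
\sum_{k\ge1}\left(\frac{|\delta|}{\delta_*}\right)^k\le\frac{2|\delta|}{\delta_*},
\]
we get \eqref{eq:target-bias-fun-se} from the case $m=0$ and \eqref{eq:target-bias-grad-se} from the case $m=1$. The same summation in the Banach space $\HK$ gives \eqref{eq:target-bias-native-se}; the series converges absolutely there by \eqref{eq:imh-native-coeff-bound-se}. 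One point needs checking: the $\HK$-valued sum has to agree with $\check H_\delta-H$ on $\Omega$. This follows because $\HK$ convergence implies pointwise convergence, and the pointwise sum is already known.

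\textbf{Step 3: sharpness.} Write $\nabla\check H_\delta-\nabla H=\delta\nabla\check H_1+\mathcal O(\delta^2)$. By \eqref{eq:se-first-coefficient}, $\nabla\check H_1=\tfrac12\nabla\langle H_q,H_p\rangle$. If this is nonzero at some point of $\mathcal K$, then $\|\nabla\check H_\delta-\nabla H\|_{C^0(\mathcal K)}\ge c|\delta|$ for small $\delta$, so the first order cannot be improved.

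The main obstacle is bookkeeping rather than substance. The constants must be uniform, and this requires choosing $\delta_*$ consistently with Proposition~\ref{prop:imh-analytic-se}, which gives holomorphy for $|\delta|<2\delta_*$, so the circle $|\zeta|=\delta_*$ is admissible. One must also make sure the Cauchy estimates on $\mathcal K$ use a tube strictly inside $\mathcal K_\sigma$.
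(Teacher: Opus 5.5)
Your proposal is correct and follows the paper's proof essentially step for step: spatial Cauchy estimates on $\mathcal K_r$, uniform for $|\zeta|=\delta_*$, then Cauchy's coefficient formula in $\delta$, then restriction to $\overline\Omega$ with $H^m(\Omega)\hookrightarrow H^s(\Omega)\cong\HK$, then geometric summation starting at $k=1$, with sharpness read off from \eqref{eq:se-first-coefficient}. Your extra checks make explicit what the paper leaves implicit: you identify the $\HK$-valued sum with $\check H_\delta-H$ via continuity of point evaluation, and you give the explicit lower bound $|\delta|\,|\nabla\check H_1(z_0)|-C\delta^2$ for sharpness.
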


\begin{proof}
Choose $0<r<\sigma$ and set
\[
 M_r:=\sup_{\substack{|\zeta|=\delta_*\\z\in\mathcal K_r}}
 |\check H_\zeta(z)|<\infty.
\]
Spatial Cauchy estimates on \(\mathcal K_r\), uniformly for \(|\zeta|=\delta_*\), followed by
Cauchy's coefficient formula in \(\delta\), give~\eqref{eq:imh-Cm-bound-se}.  Restricting these
bounds to \(\overline\Omega\), choosing an integer \(m>s\), and using
\(
 H^m(\Omega)\hookrightarrow H^s(\Omega)\cong\HK
\)
give~\eqref{eq:imh-native-coeff-bound-se}.  Summing the convergent coefficient series from
\(k=1\), using
\(
 \sum_{k\ge1}(|\delta|/\delta_*)^k\le2|\delta|/\delta_*
\), proves~\eqref{eq:target-bias-grad-se} and~\eqref{eq:target-bias-fun-se} from
\eqref{eq:imh-Cm-bound-se} with \(m=1\) and \(m=0\), respectively, and proves
\eqref{eq:target-bias-native-se} from~\eqref{eq:imh-native-coeff-bound-se}.  The sharpness statement
follows from~\eqref{eq:se-first-coefficient}.
\end{proof}

\begin{remark}[Inverse versus forward modified Hamiltonians for symplectic Euler]
The classical forward construction seeks a Hamiltonian whose exact flow represents a numerical
step and is commonly interpreted through an asymptotic series.  Here the direction is reversed:
the exact flow is given, and the mixed map determines a Hamiltonian whose symplectic Euler step
reproduces it.  The analytic family is an exact local object, and its convergent inverse modified
series is its Taylor series in \(\delta\).
\end{remark}

\section{Complex-analytic tools}

This appendix records the standard complex-analysis results used in the backward error analysis.
They are classical; see, for example, Conway~\cite{conway1978complex}. Collecting them here makes
the analytic inputs of Appendix~\ref{sec:bae} explicit.

\begin{proposition}[Cauchy integral formula on disks]\label{prop:appendix-cauchy-integral}
Let $F:\{z\in\CC:|z-z_0|<R\}\to X$ be holomorphic with values in a complex Banach space $X$. Then for every $0<r<R$ and every $|z-z_0|<r$,
\[
F(z)=\frac{1}{2\pi i}\int_{|\zeta-z_0|=r}\frac{F(\zeta)}{\zeta-z}\,d\zeta.
\]
In particular,
\[
\frac{F^{(m)}(z_0)}{m!}
=
\frac{1}{2\pi i}\int_{|\zeta-z_0|=r}\frac{F(\zeta)}{(\zeta-z_0)^{m+1}}\,d\zeta.
\]
\end{proposition}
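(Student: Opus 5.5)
The plan is to reduce to the case $z_0=0$ by translating, $G(w):=F(z_0+w)$, and then to prove the first formula by reduction to the scalar case via continuous linear functionals. For every $\ell\in X^*$, the composition $\ell\circ F$ is a scalar holomorphic function on the disk. The classical scalar Cauchy integral formula gives
\[
\ell(F(z))=\frac{1}{2\pi i}\int_{|\zeta-z_0|=r}\frac{\ell(F(\zeta))}{\zeta-z}\,d\zeta .
\]
Since $\zeta\mapsto F(\zeta)/(\zeta-z)$ is continuous on the compact circle with values in $X$, its Bochner (or Riemann) contour integral exists. Continuous linear functionals commute with this integral, so $\ell$ applied to the right-hand side of the claimed identity equals the displayed scalar expression. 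The Hahn--Banach theorem then says that functionals separate points of $X$, and the vector identity follows.

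For the coefficient formula, the step is to differentiate the first identity $m$ times in $z$ under the integral sign, which gives
\[
F^{(m)}(z)=\frac{m!}{2\pi i}\int\frac{F(\zeta)}{(\zeta-z)^{m+1}}\,d\zeta .
\]
Differentiation under the integral is justified because the integrand's $z$-derivatives are jointly continuous and uniformly bounded for $|z-z_0|\le r'<r$. Evaluating at $z=z_0$ then yields the claimed formula.

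An alternative route is to apply functionals again. The identity $\ell(F^{(m)}(z_0))=(\ell\circ F)^{(m)}(z_0)$ holds because $\ell$ commutes with the difference quotients, so the vector formula again follows from the scalar Cauchy formula together with Hahn--Banach.

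The only point needing care is the notion of holomorphy for Banach-valued maps. If it is taken as strong (norm) complex differentiability, all the steps above hold. If only weak holomorphy is assumed, I would first invoke the standard fact that weak and strong holomorphy coincide, via Cauchy estimates and uniform boundedness. I expect this identification to be the main, though classical, obstacle.
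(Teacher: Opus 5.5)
Your proof is correct. The paper gives none of its own: it records the formula as classical and cites Conway, whose treatment is scalar-valued. Your scalarization via $\ell\in X^*$ and Hahn--Banach is exactly the standard bridge from that scalar theorem to the Banach-valued statement.

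One small point of ordering. Your \emph{primary} route for the coefficient formula differentiates the integral representation, where only the scalar factor $(\zeta-z)^{-1}$ depends on $z$, and this is what actually shows that $F^{(m)}$ exists in norm. The alternative route via $\ell(F^{(m)}(z_0))=(\ell\circ F)^{(m)}(z_0)$ presupposes that existence, so it should be run only after the first.
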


\begin{proposition}[Cauchy estimates]\label{prop:appendix-cauchy-estimates}
Let $F:\{z\in\CC:|z-z_0|<R\}\to X$ be holomorphic, where $X$ is a complex Banach space. Then for every $0<r<R$ and every integer $m\ge 0$,
\[
\|F^{(m)}(z_0)\|
\le
\frac{m!}{r^m}\sup_{|\zeta-z_0|=r}\|F(\zeta)\|.
\]
More generally, if $F$ is holomorphic on a polydisk in $\CC^n$, then each mixed derivative is bounded by the corresponding product of reciprocal radii times the supremum norm of $F$ on the boundary polydisk.
\end{proposition}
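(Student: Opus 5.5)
The statement to prove is the Cauchy estimate for a holomorphic $F$ on the disk $\{|z-z_0|<R\}$ with values in a complex Banach space $X$, together with its polydisk version. I would derive it from the coefficient formula in Proposition~\ref{prop:appendix-cauchy-integral}.

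Fix $0<r<R$ and an integer $m\ge0$. The second display of Proposition~\ref{prop:appendix-cauchy-integral} gives
\[
F^{(m)}(z_0)=\frac{m!}{2\pi i}\int_{|\zeta-z_0|=r}\frac{F(\zeta)}{(\zeta-z_0)^{m+1}}\,d\zeta .
\]
This is a Bochner (equivalently Riemann) integral of a continuous $X$-valued function over a compact curve. The standard triangle inequality $\|\int_\gamma g\,d\zeta\|\le \operatorname{length}(\gamma)\sup_\gamma\|g\|$ therefore holds for it. One way to justify it is to apply bounded linear functionals and use Hahn--Banach; another is to pass to the limit in Riemann sums.

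On the circle, $|\zeta-z_0|^{-(m+1)}=r^{-(m+1)}$, and the length is $2\pi r$. Hence
\[
\|F^{(m)}(z_0)\|\le \frac{m!}{2\pi}\cdot 2\pi r\cdot r^{-(m+1)}\sup_{|\zeta-z_0|=r}\|F(\zeta)\|,
\]
which is the claimed bound.

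For the polydisk statement, let $F$ be holomorphic on the polydisk $\prod_j\{|z_j-z_{0,j}|<R_j\}$ and choose radii $r_j<R_j$. I would first establish the iterated Cauchy formula
\[
\partial^\alpha F(z_0)=\frac{\alpha!}{(2\pi i)^n}\int_{T}\frac{F(\zeta)}{\prod_j(\zeta_j-z_{0,j})^{\alpha_j+1}}\,d\zeta_1\cdots d\zeta_n
\]
over the distinguished boundary torus $T=\prod_j\{|\zeta_j-z_{0,j}|=r_j\}$. To get it, apply the one-variable formula successively in each variable, holding the others fixed; holomorphy in each variable separately suffices for this. Continuity of $F$ on the compact torus justifies Fubini and differentiation under the integral sign. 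The same length-times-supremum estimate then yields
\[
\|\partial^\alpha F(z_0)\|\le \alpha!\prod_j r_j^{-\alpha_j}\,\sup_T\|F\|,
\]
and $\sup_T\|F\|$ is at most the supremum over the boundary of the closed polydisk.

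I expect the only delicate point to be the interchange of the iterated integrals and derivatives in the several-variable case. Continuity of $F$ on a slightly larger closed polydisk handles it routinely.
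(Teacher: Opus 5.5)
Your proof is correct: it bounds the Cauchy coefficient formula of Proposition~\ref{prop:appendix-cauchy-integral} by length times supremum, and in the polydisk case iterates it over the distinguished torus. The paper gives no proof of its own and simply cites this as classical (Conway), and your argument is that standard proof; note that your polydisk bound correctly keeps the factor $\alpha!$ (matching the $m!$ in one variable), which the statement's informal wording ``product of reciprocal radii'' leaves implicit.
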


\begin{proposition}[Analytic implicit function theorem]\label{prop:appendix-analytic-ift}
Let $G:\Omega\times U\to X$ be holomorphic, where $\Omega\subset\CC^m$, $U\subset X$ is open, and $X$ is a complex Banach space. Suppose
\[
G(\lambda_0,u_0)=0
\qquad\text{and}\qquad
D_u G(\lambda_0,u_0):X\to X
\]
is invertible. Then there exist neighborhoods $\Omega_0\ni\lambda_0$ and $U_0\ni u_0$ and a unique holomorphic map $u:\Omega_0\to U_0$ such that
\[
G(\lambda,u(\lambda))=0
\qquad\text{for all }\lambda\in\Omega_0.
\]
\end{proposition}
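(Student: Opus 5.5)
We would prove the statement by a parameter-dependent contraction argument, and then obtain holomorphy of the solution from holomorphy of the Picard iterates. First we note that a holomorphic map between open subsets of complex Banach spaces is continuously Fréchet differentiable. Hence $D_uG$ is continuous near $(\lambda_0,u_0)$. Set $A:=D_uG(\lambda_0,u_0)^{-1}$, which is a bounded complex-linear operator, and define
\[
T_\lambda(u):=u-A\,G(\lambda,u),
\]
so that zeros of $G(\lambda,\cdot)$ are exactly fixed points of $T_\lambda$. Since $D_uT_\lambda(u)=A\bigl(D_uG(\lambda_0,u_0)-D_uG(\lambda,u)\bigr)$, continuity of $D_uG$ gives radii $\rho,r>0$ such that $\|D_uT_\lambda(u)\|\le\tfrac12$ whenever $|\lambda-\lambda_0|\le r$ and $\|u-u_0\|\le\rho$. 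By the mean value inequality, $T_\lambda$ is then a $\tfrac12$-contraction on the closed ball $\overline B_\rho(u_0)$.

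Next we arrange that $T_\lambda$ maps this ball into itself. We estimate
\[
\|T_\lambda(u)-u_0\|\le\|T_\lambda(u)-T_\lambda(u_0)\|+\|A\,G(\lambda,u_0)\|\le\tfrac12\rho+\|A\|\,\|G(\lambda,u_0)\|.
\]
Because $G(\lambda_0,u_0)=0$ and $G$ is continuous, shrinking $r$ makes the last term at most $\rho/2$. Banach's fixed point theorem then gives, for each $\lambda$ in $\Omega_0:=\{|\lambda-\lambda_0|<r\}$, a unique fixed point $u(\lambda)\in U_0:=B_\rho(u_0)$ (shrinking $\rho$ slightly if an open neighborhood is wanted). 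This yields existence together with the uniqueness claim: any holomorphic $\tilde u:\Omega_0\to U_0$ solving $G(\lambda,\tilde u(\lambda))=0$ is a fixed point in the ball, so $\tilde u=u$.

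For holomorphy we use the Picard iterates $u_0(\lambda)\equiv u_0$ and $u_{k+1}(\lambda):=T_\lambda(u_k(\lambda))$. Each iterate is holomorphic on $\Omega_0$, as a composition of holomorphic maps with a bounded linear operator. By the contraction estimate, $\|u_{k+1}(\lambda)-u_k(\lambda)\|\le 2^{-k}\rho$ uniformly in $\lambda$, so $u_k\to u$ uniformly on $\Omega_0$. It remains to show that a uniform limit of holomorphic $X$-valued maps on a domain in $\CC^m$ is holomorphic, and we expect this to be the main point needing care. We would apply the Cauchy integral formula of Proposition~\ref{prop:appendix-cauchy-integral} in each variable separately: pass to the limit under the integral, which is legitimate by uniform convergence, to get separate holomorphy of $u$. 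Continuity of $u$ follows from the uniform limit. Separate holomorphy plus continuity then gives joint holomorphy (Osgood's lemma, via the iterated Cauchy formula on polydisks). If one prefers a direct check, one can instead differentiate the identity $G(\lambda,u(\lambda))=0$ and verify that $Du(\lambda)=-D_uG^{-1}D_\lambda G$ is complex-linear. That route, however, requires first proving that $u$ is $C^1$, which the uniform-limit argument avoids.
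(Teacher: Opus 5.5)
Your proof is correct. The paper gives no proof of this proposition; it lists it in the appendix as a classical tool with a pointer to the literature, so there is no argument to compare with. What you give is one of the standard self-contained proofs: a parameter-dependent contraction, holomorphic Picard iterates, and holomorphy of uniform limits.

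There are two small points.

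First, the fixed point of $T_\lambda$ lies a priori in the closed ball $\overline B_\rho(u_0)$. To put it in the open set $U_0=B_\rho(u_0)$, you should shrink $r$, not $\rho$. Require $\|A\|\,\|G(\lambda,u_0)\|<\rho/2$ on $\Omega_0$; the fixed-point equation then gives $\|u(\lambda)-u_0\|\le 2\|A\,G(\lambda,u_0)\|<\rho$. Shrinking $\rho$ with $r$ fixed would instead break the self-mapping estimate.

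Second, you do not need Osgood's lemma. Each iterate $u_k$ is already jointly holomorphic, so it satisfies the iterated Cauchy formula on every small closed polydisk in $\Omega_0$. Uniform convergence passes this formula to the limit, which shows directly that $u$ is a Cauchy integral of a continuous function and hence holomorphic.

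Your uniqueness argument also proves more than the statement asks. It shows that $u(\lambda)$ is the only zero of $G(\lambda,\cdot)$ in the ball, with no holomorphy or continuity assumed on a competitor. This stronger form is what the appendix actually uses when it says the local solutions agree on overlaps.

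The alternative route you mention at the end is the other common textbook proof. Its $C^1$ prerequisite costs nothing if you cite the real Banach-space implicit function theorem. Holomorphy then reduces to checking that $Du(\lambda)=-D_uG(\lambda,u(\lambda))^{-1}D_\lambda G(\lambda,u(\lambda))$ is complex-linear.
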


\begin{proposition}[Holomorphic dependence for analytic ODEs]\label{prop:appendix-holomorphic-ode}
Let $f:V\to\CC^n$ be holomorphic on an open set $V\subset\CC^n$, and consider
\[
\dot z(t)=f(z(t)),\qquad z(0)=z_0\in V.
\]
Then, for every compact set $K\Subset V$, there exists $\tau>0$ such that whenever $z_0\in K$, the solution exists for $|t|<\tau$ and depends holomorphically on both $t$ and $z_0$. In particular, the exact flow map $(t,z_0)\mapsto \varphi_t(z_0)$ admits a holomorphic extension to a complex neighborhood of $\{0\}\times K$.
\end{proposition}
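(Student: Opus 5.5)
We would prove this by a Picard iteration carried out directly in complex time, using Weierstrass's theorem to transfer holomorphy from the iterates to their limit. Since $K\Subset V$ is compact, we first fix $r>0$ so that the closed neighborhood $K_{2r}:=\{z:\operatorname{dist}(z,K)\le 2r\}$ lies in $V$. We set $M:=\sup_{K_{2r}}|f|<\infty$. The Cauchy estimates (Proposition~\ref{prop:appendix-cauchy-estimates}) applied on polydisks of radius comparable to $r$ centred at points of $K_r$ bound $Df$ on $K_r$ by some $L\lesssim M/r$. Hence $f$ is $L$-Lipschitz on $K_r$ for segments contained in $K_r$. We then choose $\tau>0$ with $\tau M<r$ and $\tau L<1$.

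On the polydomain $D:=\{|t|<\tau\}\times\{z_0:\operatorname{dist}(z_0,K)<\epsilon\}$, with $\epsilon$ small, we define iterates by integrating along the straight segment from $0$ to $t$:
\[
z_0^{(0)}(t,z_0):=z_0,\qquad z^{(k+1)}(t,z_0):=z_0+t\int_0^1 f\bigl(z^{(k)}(st,z_0)\bigr)\,ds .
\]
By induction, each $z^{(k)}$ is jointly holomorphic on $D$, since the integrand is holomorphic in $(t,z_0)$ for each $s$ and differentiation under the integral is allowed. Also $|z^{(k+1)}-z_0|\le|t|M<r$, so every iterate stays in $K_r$ (after shrinking $\epsilon$ so that $\epsilon+r\le 2r$) and the next iterate is well defined. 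The Lipschitz bound gives
\[
\sup_D|z^{(k+1)}-z^{(k)}|\le \tau L\,\sup_D|z^{(k)}-z^{(k-1)}|,
\]
so the sequence converges uniformly on $D$. By Weierstrass's theorem, the limit $\varphi(t,z_0)$ is jointly holomorphic. Passing to the limit shows that it satisfies $\varphi(t,z_0)=z_0+t\int_0^1 f(\varphi(st,z_0))\,ds$. Differentiating in $t$ (a change of variables turns the right side into $z_0+\int_{[0,t]} f(\varphi(\sigma,z_0))\,d\sigma$) gives $\partial_t\varphi=f(\varphi)$ and $\varphi(0,z_0)=z_0$.

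It remains to identify $\varphi$ with the flow. By the Lipschitz estimate, solutions of the integral equation with values in $K_r$ are unique. Consequently, for real $t$ and real $z_0$ (when $f$ is real on real arguments), $\varphi$ coincides with the classical flow $\varphi_t(z_0)$, which shows that $\varphi$ is the holomorphic extension of $(t,z_0)\mapsto\varphi_t(z_0)$ to a neighborhood of $\{0\}\times K$. The step that needs care is the bookkeeping that keeps all iterates inside $K_r$, where the Lipschitz constant is valid, uniformly over a full complex neighborhood of $K$. This is why we use the two radii $r$ and $2r$: the Cauchy estimates for $Df$ need room beyond $K_r$, and the starting points $z_0$ lie slightly outside $K$. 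Once $\tau$ and $\epsilon$ are fixed as above, the remaining steps are routine.
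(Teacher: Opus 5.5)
The paper gives no proof of this proposition. Appendix~B records it as a classical fact and refers to Conway, so your argument supplies a self-contained proof where the paper only cites. Your argument runs Picard iteration in complex time along segments from $0$ to $t$ and passes holomorphy to the limit with the Weierstrass convergence theorem. This is the standard route and it is sound. What it adds over the citation is an explicit existence radius that is uniform over $K$: $\tau$ of order $r/M$ up to a dimensional constant, where $2r$ is the room between $K$ and $\partial V$ and $M=\sup_{K_{2r}}|f|$. That is exactly the uniformity over a compact set that Appendix~A relies on when it invokes this result on a neighborhood of $\{0\}\times\mathcal K$.

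One piece of your bookkeeping is wrong as written. From $|z^{(k+1)}-z_0|\le|t|M<r$ and $\operatorname{dist}(z_0,K)<\epsilon\le r$ you only obtain $z^{(k+1)}\in K_{2r}$, not $K_r$. Your Lipschitz constant $L$, however, comes from Cauchy estimates valid only on $K_r$. So the contraction estimate $\sup_D|z^{(k+1)}-z^{(k)}|\le\tau L\sup_D|z^{(k)}-z^{(k-1)}|$ is not justified with your stated choices.

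The fix costs nothing. Impose $\epsilon+\tau M<r$, for instance $\epsilon<r/2$ and $\tau M<r/2$. Then every iterate lies in the convex ball of radius $\tau M$ about $z_0$, and that ball is contained in $K_r$. The segments joining consecutive iterates therefore stay in $K_r$, which is what the mean-value Lipschitz bound needs.

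The same containment, via a continuity argument using $|f|\le M$, shows that the classical solution remains in $K_r$ for $|t|<\tau$, as your uniqueness step requires. Alternatively, cite Picard--Lindel\"of uniqueness for the locally Lipschitz field. The identification with the classical flow also holds for real $t$ and arbitrary complex $z_0$, without assuming $f$ is real on real arguments.

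Minor typo: $z_0^{(0)}$ should read $z^{(0)}$.
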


\section{Glossary of symbols}

\noindent
\textbf{Hamiltonians and data.}

\medskip
\nopagebreak[4]
\noindent\begin{tabularx}{\textwidth}{@{}>{\raggedright\arraybackslash}p{0.30\textwidth}
>{\raggedright\arraybackslash}p{0.22\textwidth}>{\raggedright\arraybackslash}X@{}}
\toprule
\textbf{Symbol} & \textbf{Name} & \textbf{Meaning} \\
\midrule
$H$ & Physical Hamiltonian & $H:\RR^{2d}\to\RR$ \\
$\varphi_\delta^H$ & Exact flow & Time-$\delta$ flow of $\dot z=J\nabla H$ \\
$I_\delta^{h,\Int}$ & Integrator step & One step of method $\Int$ applied to $h$ \\
$\check H_\delta^\Int$ & Exact inverse target & Hamiltonian satisfying $I_\delta^{\check H_\delta^\Int,\Int}=\varphi_\delta^H$ \\
$x^\Int$ & Scheme-adapted anchor & $(z^-+z^+)/2$ for MP; $(q^-,p^+)$ for $p$-implicit SE \\
$v_\delta$ & Discrete velocity & $(z^+-z^-)/\delta$ \\
\bottomrule
\end{tabularx}

\bigskip\noindent
\textbf{Estimators.}

\medskip
\nopagebreak[4]
\noindent\begin{tabularx}{\textwidth}{@{}>{\raggedright\arraybackslash}p{0.30\textwidth}
>{\raggedright\arraybackslash}p{0.22\textwidth}>{\raggedright\arraybackslash}X@{}}
\toprule
\textbf{Symbol} & \textbf{Name} & \textbf{Meaning} \\
\midrule
$\widehat h_{\lambda,N,L}^{\Int}$ & Empirical estimator & Minimizer of the regularized empirical loss \\
$h_{\lambda,L}^{*,\Int}$ & Population minimizer & Minimizer of the corresponding population problem \\
$\widehat h_{\lambda,NL}^{U,W}$ & Perturbed estimator & Estimator formed from noisy anchors and discrete velocities \\
$\widehat h_{s,\delta}^{\Int}$ & Stride-$s$ estimator & Estimator constructed from endpoint pairs separated by $s\delta$ \\
$\widehat h_\delta^{\Int,(Q)}$ & Richardson estimator & $\sum_{s=1}^Q\alpha_s^\Int\widehat h_{s,\delta}^{\Int}$ \\
\bottomrule
\end{tabularx}

\bigskip\noindent
\textbf{Operators and distributions.}

\medskip
\nopagebreak[4]
\noindent\begin{tabularx}{\textwidth}{@{}>{\raggedright\arraybackslash}p{0.30\textwidth}
>{\raggedright\arraybackslash}p{0.22\textwidth}>{\raggedright\arraybackslash}X@{}}
\toprule
\textbf{Symbol} & \textbf{Name} & \textbf{Meaning} \\
\midrule
$A^\Int$ & Observation operator & $A^\Int h=J\nabla h$ as a map into $L^2(\rho^\Int;\RR^{2d})$ \\
$B^\Int=(A^\Int)^*A^\Int$ & Population covariance & Positive trace-class covariance operator \\
$A_{NL}^\Int$ & Empirical observation operator & Sampling at the $NL$ scheme-adapted anchors \\
$B_{NL}^\Int=(A_{NL}^\Int)^*A_{NL}^\Int$ & Empirical covariance & Empirical covariance operator \\
$\rho^\Int$ & Population input law & Averaged distribution of the scheme-adapted anchors \\
\bottomrule
\end{tabularx}

\bigskip\noindent
\textbf{Parameters and error quantities.}

\medskip
\nopagebreak[4]
\noindent\begin{tabularx}{\textwidth}{@{}>{\raggedright\arraybackslash}p{0.30\textwidth}
>{\raggedright\arraybackslash}p{0.22\textwidth}>{\raggedright\arraybackslash}X@{}}
\toprule
\textbf{Symbol} & \textbf{Name} & \textbf{Meaning} \\
\midrule
$J$ & Symplectic matrix & $\bigl[\begin{smallmatrix}0&I_d\\-I_d&0\end{smallmatrix}\bigr]$ \\
$\Int\in\{\MP,\SE\}$ & Integrator index & Implicit midpoint or $p$-implicit symplectic Euler \\
$N$ & Number of trajectories & Number of independent initial conditions \\
$L$ & Observations per trajectory & Number of consecutive endpoint pairs contributed by each trajectory \\
$\delta$ & Observation step & Time interval between consecutive observed states \\
$\lambda$ & Regularization parameter & Tikhonov parameter in the empirical loss \\
$\eta=\lambda/\delta^2$ & Effective regularization & Regularization parameter after rescaling by $\delta^2$ \\
$\gamma$ & Source exponent & Exponent in $\check H_\delta^\Int=(B^\Int)^\gamma\psi_\delta^\Int$ \\
$r_\Int$ & Integrator-error order & $r_{\MP}=2$ and $r_{\SE}=1$ \\
$\alpha_s^\Int$ & Richardson coefficient & Weight assigned to the stride-$s$ estimator \\
$\mathcal A_Q^\Int$ & Richardson amplification & $\sum_{s=1}^Q|\alpha_s^\Int|$ \\
$r_U,r_W$ & Pathwise noise levels & Anchor- and velocity-perturbation levels in~\eqref{eq:pathwise-noise-radii} \\
\bottomrule
\end{tabularx}
\end{document}